\RequirePackage{fix-cm}
\RequirePackage{amsmath}

\documentclass[smallextended,natbib]{svjour3+}       


\makeatletter
\let\cl@chapter\undefined
\makeatletter

\smartqed

\usepackage{bm}
\usepackage{enumitem}
\usepackage{graphicx}
\usepackage{hyperref}
\usepackage{newtxtext,newtxmath}
\usepackage{tikz}
\usepackage{amsmath,amssymb}

\DeclareMathOperator*{\Extreme}{ext}
\DeclareMathOperator{\ArensEells}{\text{{\normalfont\AE}}}
\DeclareMathOperator{\CUT}{\text{{\normalfont CUT}}}
\DeclareMathOperator{\Expectation}{\mathbb E}
\DeclareMathOperator{\KB}{\text{{\normalfont KB}}}
\DeclareMathOperator{\Lipschitz}{Lip}

\DeclareMathOperator{\Varsig}{Sgn}

\newcommand{\AEspace}[1]{\ArensEells\left(#1\right)}

\newcommand{\aval}[1]{\left\vert#1\right\vert}
\newcommand{\chof}[1]{\operatorname{child}\left(#1\right)}

\newcommand{\expectat}[2]{\Expectation_{#1}\left[#2\right]}
\newcommand{\expectof}[1]{\Expectation\left(#1\right)}

\newcommand{\normat}[2]{\left\Vert#2\right\Vert_{#1}}
\newcommand{\normof}[1]{\left\Vert#1\right\Vert}

\newcommand{\reals}{\mathbb{R}}

\newcommand{\scalarof}[2]{\left\langle#1,#2\right\rangle}
\newcommand{\setof}[2]{\left\{#1 \,\middle|\, #2 \right\}}
\newcommand{\set}[1]{\left\{#1\right\}}

\usepackage{lineno}
\newcommand*\patchAmsMathEnvironmentForLineno[1]{%
   \expandafter\let\csname old#1\expandafter\endcsname\csname #1\endcsname
   \expandafter\let\csname oldend#1\expandafter\endcsname\csname end#1\endcsname
   \renewenvironment{#1}%
      {\linenomath\csname old#1\endcsname}%
      {\csname oldend#1\endcsname\endlinenomath}}%
\newcommand*\patchBothAmsMathEnvironmentsForLineno[1]{%
   \patchAmsMathEnvironmentForLineno{#1}%
   \patchAmsMathEnvironmentForLineno{#1*}}%
\AtBeginDocument{%
\patchBothAmsMathEnvironmentsForLineno{equation}%
\patchBothAmsMathEnvironmentsForLineno{align}%
\patchBothAmsMathEnvironmentsForLineno{flalign}%
\patchBothAmsMathEnvironmentsForLineno{alignat}%
\patchBothAmsMathEnvironmentsForLineno{gather}%
\patchBothAmsMathEnvironmentsForLineno{multline}%
}

\usepackage{cleveref}

\begin{document}

\title{Kantorovich distance on a finite metric space}

\thanks{G.~Pistone acknowledges the support of de Castro Statistics and Collegio Carlo Alberto. He is a member of GNAMPA-INdAM.}

\author{Luigi~Montrucchio \and Giovanni~Pistone}

\authorrunning{L.~Montrucchio, G.~Pistone} 

\institute{Luigi Montrucchio \at
Collegio Carlo Alberto,
Piazza Vincenzo Arbarello 8,
10122 Torino, Italy \\
\email{luigi.montrucchio@unito.it}
\and
Giovanni Pistone \at
de Castro Statistics, Collegio Carlo Alberto, 
Piazza Vincenzo Arbarello 8, 
10122 Torino, Italy \\
\email{giovanni.pistone@carloalberto.org}
}

\date{}

\maketitle

\begin{abstract}
  Kantorovich distance (or 1-Wasserstein distance) on the probability simplex of a finite metric space is the value of a Linear Programming problem for which a closed-form expression is known in some cases. When the ground distance is defined by a graph, a few examples have already been studied. 
  In the present paper, after re-deriving, with different tools, the result for trees, we prove that, for an arbitrary weighted graph, the K-distance is the minimum of the K-distances over all the spanning trees associated with the graph. We work in the dual LP-problem by using Arens-Eells norm associated with the metric space. 
  Finally, we introduce new norms that are naturally related to $\ell_1$-embeddable distances and allows for a partial extension of our results to this new setting.
  
\keywords{optimal transport\and Kantorovich distance \and Arens-Eells space \and finite metric space \and tree \and spanning tree \and $\ell_1$-embeddable space \and quotient map}

\subclass{05C05 \and 05C12 \and 05C22 \and 46B85 \and 90C08 \and 90C35}
\end{abstract}

\section{Introduction}

The set-up for Information Geometry (IG) as defined by S.-I. Amari provides a dually flat vector bundle on top of a statistical model. We refer to \citet{amari|nagaoka:2000}, \citet{amari:2016}, \citet*{ay|jost|le|schwachhofer:2017}. From the non-parametric point of view, one element of the duality is the affine structure whose basis is the full open probability simplex on the sample space $X$ and the (tangent) vector space is the vector space $M_0(X)$ of zero-mass measures. This tangent space is sometimes called the mixture space or m-space.

The choice of a convenient norm on $M_0(X)$ is clearly seminal for the full development of a non-parametric IG. In a sense, the most natural choice is the variation norm. With such a choice, the basic separating duality on which full theory is developed relies on $(\xi,u) \mapsto \int u \ d\xi$ with $\xi \in M_0(X)$ and $u \in L^1_0(X,\xi)$. Other options have been considered, where the approach is still non-parametric but only special classes of measures is taken into account. For example, \citet{pistone|sempi:95}, \citet{pistone:2013GSI}, model the m-tangent space as an Orlicz space of $L\log L$ type.

Both approaches have been criticised from the point of view of applications to Statistics and Machine Learning because the proposed norms induce a topology on the probability simplex which is too strong. It is argued that one would like to get an IG that allows for the use of weak convergence of probability measures.

One possible way to get a norm on $M_0(X)$, such that the corresponding distance on the probability simplex is essentially equivalent to the weak convergence of probability measures, is based on using the distance defined on the probability simplex, whose theory has been fully developed in \citet{kantorovich|rubinstein:1958}. Precisely, we refer here to what is usually called 1-Wasserstein distance in the Optimal Transport (OT) literature. It is a distance which is compatible with the affine structure of the probability simplex and moreover provides a convergence essentially equivalent with the weak convergence. OT is currently a very active field of research, both from the mathematical point of view and as a tool in many applications. A number of monographs have been recently published, among which, we like notably to mention \citet{villani:2003-topics}, \citet{villani:2008optimal}, \citet*{ambrosio|gigli|savare:2008}, \citet{santambrogio:2015OTAP}. The case of finite state spaces and the related computational methods is discussed in detail by \citet{peyre|cuturi:2019}.

With that motivation in mind, in this paper we study the simplest case, where the sample space is a finite metric space with distance $d$. In this case, the convergence in variation and the weak convergence are topologically equivalent, but nevertheless there is an important conceptual difference. We do not consider here the other $p$-Wasserstein distance, with $p > 1$, that have been recently discussed in the IG literature in relation with the proposal of Riemannian metrics other than Fisher metric. For example, see in this journal, \citet*{malago|montrucchio|pistone:2018}, \citet{li|montufar:2018}.

Long before the full development by Kantorovich, the issue of evaluating the dissimilarity between two probability distribution $\mu, \nu \in \Delta(X)$ has been discussed in terms of a hidden random variable $Z$ along with the two functions $x,y$ for which $x(Z) \sim \mu$ and $y(Z) \sim \nu$. The dissimilarity is then defined as the minimum expected value $\expectof{d(x(Z),y(Z))}$. An early example is from  \citet{gini:1914dissomiglianza} that provides a solution in the case of distribution with a finite support in the real numbers.

Clearly, there is always a version of the hidden variable on the product space, so that we have the classical definitions that we reproduce below for reader's convenience.

Let $X$ be a set with $n$ points, with generic points $x, y, z, \dots$. In general, we do not number finite sets. $\Delta(X)$ is the probability simplex on $X$, that is, the set of all probability functions $\mu, \nu, \dots$. The sample space $X$ is endowed with a distance $d$. Frequently, the distance is provided by a weighted graph.

Given a couple $(\mu,\nu)$ of probability functions, recall that a joint probability function $\gamma \in \Delta(X \times X)$ is a \emph{coupling}, if $\mu$ and $\nu$ are the two margins of $\gamma$, respectively. The set of all couplings $\mathcal P(\mu,\nu)$ is a subset of $\Delta(X \times X)$ defined by the $2(n-1)$ independent affine constraints 
\begin{equation*}
 \sum_{y\in X} \gamma(x,y) = \mu(x) \ , \quad  \sum_{x\in X} \gamma(x,y) = \nu(y) \ , \qquad x,y \neq x_0 \ .  
\end{equation*}
In particular, it is a polytope, and, being such, it is the convex combination of its vertices.

\begin{definition}\label{def:K-distance}
Given $\mu,\nu \in \Delta(X)$, the \emph{Kantorovich distance} (K-distance) is defined on $\Delta(X)\times\Delta(X)$ to be the value of the Linear Programming problem
\begin{equation}\label{eq:K-distance}
  d(\mu,\nu) = \inf \setof{\sum_{x,y \in X} d(x,y) \gamma(x,y)}{\gamma \in \mathcal P(\mu,\nu)} \ .
\end{equation}
\end{definition}

This statistical index is a distance that extends the ground distance, i.e., $d(\delta_x,\delta_y) = d(x,y)$. Other properties are easily proved in our finite setting and they are still true in the general (non-finite) setting.

Observe first that the mixture model $\mu(t) = (1-t) \mu + t \nu$, $t \in [0,1]$, is a metric geodesic linking the two probability functions $\mu$ and $\nu$, that is, $d(\mu(s),\mu(t)) = \aval{t-s}d(\mu,\nu)$. Second, the metric convergence is equivalent to the weak convergence. This ensures the compatibility of this set-up both with the affine IG and the required convergence.

It is interesting from the IG point of view to remark that the metric geodesics for the K-distance are not uniquely defined, and this fact represents an important feature of the K-distance. Some of the alternative metric geodesics between two probability functions are more interesting than the mixture, from the point of view of applications involving the idea of transport of masses. On this topic, we refer to the tutorial parts of \citet{santambrogio:2015OTAP}.   

For example, take the points 1, 2, 3 on the line. The mixture $\mu(t) = (1-t)\delta_1 + t \delta_3$, $t \in [0,1]$, moves the mass from 1 to 3 without passing through 2, which could be considered unnatural in some applications based on particle modeling. However, one can join the mixture of $\delta_1$ and $\delta_2$ to the mixture of $\delta_2$ and $\delta_3$ to get a metric geodesic whose middle point is $\delta_2$. The joining of two metric geodesic is a metric geodesic because $d(1,3) = d(1,2) + d(2,3)$.

\subsection{Content of the paper}  \Cref{sec:lipshitz} begins with an illustration of  Kantorovich-Rubinstein (1958) duality theory for the LP problem  \eqref{def:K-distance}. More details can be found, for example, in \S~1.2 and \S~3.1.1 of \citet{santambrogio:2015OTAP}. The duality theory teaches us that the K-distance obtains as a maximization problem on the class of Lipschitz functions. This entails that the distance induced on probability functions by the solution of the LP happens to be the restriction of a norm to the difference $\xi = \mu-\nu$ (see Kantorovich-Berstein norm in \cref{KBnorm}).

After these classic arguments, we deviate a bit and we introduce two other norms on space $M_0(X)$. The first one goes back to  \citet{arens|eells:1956}. We refer to the monograph \citet{weaver:2018-LA-2nd-ed}  for an exhaustive study about this subject. Though the $\ArensEells$-norm turns out to be equivalent to KB-norm, it is sometimes more tractable. For instance, this norm permits to establish---see  \Cref{prop:Vague}---nice properties for the functions $a(x,y)$ which are a surrogate of couplings $\gamma(x,y)$ in $\ArensEells$ spaces.

For the special class of finite metric spaces $(X,d)$ whose metrics lie in the cone  $\operatorname{CUT}(X)$, i.e., the $\ell_1$-embeddable metrics, we offer another natural extension to the space $M_0(X)$, through the construction of a C-norm. This will be discussed in \Cref{sub:CUTS}. Such a new norm sheds light on the nature of K-distance  for a class of $\ell_1$-embeddable metrics. 

In a good deal of the present paper the underlying space $X$ is endowed with the so-called graphic metric, defined for a weighted (undirected) graph. Namely: the sum of weights along a path defines its length and the distance between two points is the length of the shortest path linking them. The availability of a graph defining the metric is not a restrictive assumption. In fact, all distances on a finite set can be realized by (many) weighted graphs. On the topic of distances on finite sets, the reader is referred to the monograph \citet{deza|laurent:1997}. However, the assumption of a specific weighted graph is useful. First, the graph could be assigned by the specific application and, intuitively, one expects the optimal transport to flow along the ground ``geodesics''. Second, the explicit form of the K-distance is expressed, in some cases, through the given ground graph. Some useful facts about distances and about graphs are gathered in \Cref{sec:graphs} below.

A closed-form expression for the K-distance has been known even before the formalization by Kantorovich in the case of the distance induced by a total order, see \citet{salvemini:1939}. In such a case, the K-distance reduces to the weighted $L^1$-distance between the cumulative probability functions. This result has been recently extended to arbitrary finite trees in Th.~1 by  \citet{mendivil:2017} who provided a generalized closed-form expression of a K-distance. In the present paper (see \Cref{sec:trees}) we re-derive his result by means of other methods of proofs. Moreover, we show that this is a special case of the general theory for cut distances. The closed-form equation has interests in its own, as we show with a couple of examples. K-distance on trees has also been considered in \citet{kloeckner:2015}, and by M.~Sommerfeld and \citet{sommerfeld|munk:2018}. 

The existence of closed-form solutions for trees prompts for an inquiry about the use of spanning trees to compute the K-distance for general graphs. In fact, the distance turns out to be the minimum distance among the spanning trees of the assigned graph. This is discussed in \Cref{sec:extensions} and should be compared with Th.~2 of \citet{mendivil:2017} and with \citet{cabrelli|molter:1995}.

\subsection{Distances and graphs}
\label{sec:graphs}

A weight $w$ on the finite set $X$ is a symmetric mapping from  $X \times X$ to non-negative reals. The support of a weight defines a (undirected, simple) graph $G = (X,\mathcal E)$ with edges $\mathcal E = \setof{\set{x,y}}{w(x,y)>0}$. We write also $V(G) = X$ and $E(G)=\mathcal E$. A (un-directed, simple) graph without loops i.e., edges of the form $\set{x,x}$, can be seen as a weighted graph in which all the weights on edges are equal to 1. 

A path between $x$ and $y$ is a sequence of at least two vertices $x=x_0,\dots,x_n=y$ such that $x_{i-1}x_{i}$ is an edge, $i=1,\dots,n$.  A cycle is a path with $x_0=x_n$. The length of the path is $\sum_{i=1}^n w(x_{i-1},x_i)$. All graphs we consider are connected, that is, there is always a path connecting $x$ to $y$.

Given a weighted graph, the distance $d(x,y)$, of two vertices, is defined to be the length of the shortest path connecting $x$ and $y$. It is easy to see that it is actually a distance. If confusion can arise we will also write $d_G$ or $d_{G,w}$.

Conversely, if $d$ is a distance on $X$, then $d$ is a weight on the complete graph and the distance induced by the weight is equal to $d$. However, a smaller graph could define the same distance. In fact, giving a weighted graph is more than giving the mere distance. Other tools related to distances on finite sets will be introduced later.

A graph without cycles is called a tree if it is connected, a forest otherwise. In a tree, the length of a two-point path equals its weight. It is frequently useful to select a distinguished vertex, the so-called root of the tree, and to provide each edge with a direction in such a way that the distance from the root vertex increases in that direction. In such a case, we write the edge as $x \to y$. Given a vertex $x$, the set of all $y$ such that $x \to y$ (the children of $x$) is denoted $\chof x$. The unique parent of a non-root vertex $x$ is denoted by $x^+$. The partial order induced by a rooted tree is denoted by $x \preceq y$ or $y \succeq x$.

Recall that it is called a \textit{tree metric} (or \textit{tree-like}) a space which is isometrically embedded into a weighted tree. Under the so-called  \textit{four-point condition}, a metric space can be realized by a subset of a weighted tree.  See \citet{buneman:1974}.  Ultrametric spaces  fall into this class of metrics. We refer to the monograph by \citet{bollobas:1998} and \citet{deza|laurent:1997} for further results on these subjects.

\section{Duality, Arens-Eells spaces and CUT seminorms}\label{sec:lipshitz} 

A real function $u$ on $X$ is called 1-Lipschitz for the distance $d$, if $\aval{u(x)-u(y)} \leq d(x,y)$, for all $x,y \in X$. Equivalently, $u(y) \leq d(x,y) + u(x)$ for all $x,y \in X$.

The condition $\aval{u(x) - u(y)} \leq Kd(x,y)$ for some $K$ gives rise to the linear space of Lipschitz functions $\Lipschitz(d)$. The best constant $K$ is a semi-norm, $\normat {\Lipschitz(d)}{u}$. The set of 1-Lipschitz functions will denoted by $\Lipschitz_1(d)$. When the distance is generated by a weighted graph, it is enough to check the Lipschitz condition on edges, as established in the next proposition.

\begin{proposition}\label{prop:lipshitz}
Let $(X,w)$ be a weighted graph with associated distance $d$. A function $u \colon X \to \reals$ belongs to $\Lipschitz_1(d)$ if, and only if, $\aval{u(x) - u(y)} \leq d(x,y)$ for each edge $xy \in \mathcal E$.
\end{proposition}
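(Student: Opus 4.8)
The plan is to reduce the Lipschitz condition on arbitrary pairs of points to the condition on edges by walking along a shortest path and telescoping. The forward implication needs no work: if $u \in \Lipschitz_1(d)$, then $\aval{u(x)-u(y)} \leq d(x,y)$ holds for every pair $\set{x}{x=y}$ — never mind, for every pair of vertices — hence in particular for every pair $\set{x,y}$ that happens to be an edge of $\mathcal E$. So the content is entirely in the converse.

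For the converse I would assume $\aval{u(x)-u(y)} \leq d(x,y)$ for each edge $xy \in \mathcal E$ and fix an arbitrary pair of vertices $x,y \in X$. Since the graph is connected, I would choose a path $x = x_0, x_1, \dots, x_n = y$ of minimal length, so that by definition $d(x,y) = \sum_{i=1}^n w(x_{i-1},x_i)$. Each consecutive pair $x_{i-1}x_i$ is an edge, and since the one-step path through that edge has length $w(x_{i-1},x_i)$ we have $d(x_{i-1},x_i) \leq w(x_{i-1},x_i)$; combining this with the hypothesis gives $\aval{u(x_{i-1}) - u(x_i)} \leq w(x_{i-1},x_i)$ for each $i$. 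Then the triangle inequality for the absolute value on $\reals$, applied term by term, yields
\[
\aval{u(x)-u(y)} \leq \sum_{i=1}^n \aval{u(x_{i-1})-u(x_i)} \leq \sum_{i=1}^n w(x_{i-1},x_i) = d(x,y),
\]
which is exactly the Lipschitz inequality for the arbitrary pair $\set{x,y}$, completing the proof.

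There is essentially no serious obstacle; the only point that deserves care — and the step I would flag — is that an edge weight $w(x_{i-1},x_i)$ may strictly exceed the distance $d(x_{i-1},x_i)$, so it matters that the hypothesis is phrased with $d$ (the more demanding constraint) while the intermediate terms in the chain are bounded by the weights $w(x_{i-1},x_i)$, so that the sum collapses precisely to the length $d(x,y)$ of the chosen shortest path. The degenerate case $n=1$, where $x$ and $y$ are adjacent and the edge is itself a geodesic, is subsumed in the same computation.
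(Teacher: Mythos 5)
Your proof is correct: the telescoping of $\aval{u(x)-u(y)}$ along a shortest path, combined with the observation that $d(x_{i-1},x_i)\leq w(x_{i-1},x_i)$ on each edge while the weights sum exactly to $d(x,y)$, is precisely the standard argument, and it matches the reasoning the paper itself uses later around \cref{eq:CZZ} (the paper states \cref{prop:lipshitz} without proof). The point you flag --- that the hypothesis is phrased with $d$ rather than $w$ on edges --- is indeed the only subtlety, and you handle it correctly.
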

More generally, we can say that $\aval{u(x)-u(y)} \leq K d(x,y)$ holds for each edge $xy \in \mathcal E$ if, and only if, $\normat {\Lipschitz(d)}{u} \leq K$.

Kantorovich duality theorem below is an application of LP duality. For a detailed treatment, see, for example, \citet{santambrogio:2015OTAP} and \citet{villani:2008optimal}.

\begin{theorem}[Kantorovich duality]\label{th:Kantorovich-Rubinstein}
\label{Kantorovich}
Let $\mu$ and $\nu$ be given probability functions on the finite metric space $(X,d)$ and $\mathcal P(\mu,\nu)$ be the set of couplings. Then, 
\begin{multline*}
 d(\mu,\nu) = 
 \min \setof{\sum_{x,y\in X} d(x,y) \gamma(x,y)}{\gamma \in \mathcal P(\mu,\nu)} = \\ \max \setof {\sum_{z \in X} u(z) (\mu(z)-\nu(z))}{u \in \Lipschitz_1(d)} \ .
\end{multline*}
\end{theorem}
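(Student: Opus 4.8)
The plan is to combine the elementary weak-duality inequality with the strong-duality theorem for finite linear programs, followed by a reduction from a pair of ``potentials'' to a single Lipschitz function.

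\textbf{Weak duality.} First I would dispatch the easy inequality. Fix $\gamma \in \mathcal P(\mu,\nu)$ and $u \in \Lipschitz_1(d)$. Since $\mu$ and $\nu$ are the margins of $\gamma$,
\[
  \sum_{z\in X} u(z)\bigl(\mu(z)-\nu(z)\bigr)
   = \sum_{x,y\in X}\bigl(u(x)-u(y)\bigr)\gamma(x,y)
   \le \sum_{x,y\in X} d(x,y)\,\gamma(x,y),
\]
using $u(x)-u(y)\le d(x,y)$ and $\gamma\ge 0$. Taking the infimum over $\gamma$ on the right and the supremum over $u$ on the left gives $\max\le\min$. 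Both extrema are attained: the primal over the nonempty compact polytope $\mathcal P(\mu,\nu)$ with a continuous objective; the dual over the set of $1$-Lipschitz $u$ normalised by $u(x_0)=0$, which is compact in $\reals^X$ and on which the objective is unchanged because $\mu,\nu$ are probability functions.

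\textbf{Strong duality via LP.} For the reverse inequality I would write \eqref{eq:K-distance} as the explicit linear program in the variables $\gamma(x,y)\ge 0$ with equality constraints $\sum_y\gamma(x,y)=\mu(x)$ ($x\in X$) and $\sum_x\gamma(x,y)=\nu(y)$ ($y\in X$), and form its dual. Equality constraints give free dual variables $\phi(x),\psi(y)$, the nonnegativity of $\gamma$ gives the constraints $\phi(x)+\psi(y)\le d(x,y)$, and the dual objective is $\sum_x\phi(x)\mu(x)+\sum_y\psi(y)\nu(y)$. The primal is feasible (take $\gamma=\mu\otimes\nu$) and bounded below by $0$, so the LP duality theorem gives
\[
  \min_{\gamma\in\mathcal P(\mu,\nu)} \sum_{x,y} d(x,y)\gamma(x,y)
   = \max_{\phi(x)+\psi(y)\le d(x,y)} \Bigl(\sum_x\phi(x)\mu(x)+\sum_y\psi(y)\nu(y)\Bigr),
\]
with the maximum attained.

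\textbf{Reduction to one potential.} It remains to identify the right-hand side with $\max\{\sum_z u(z)(\mu(z)-\nu(z)):u\in\Lipschitz_1(d)\}$. Given a feasible pair $(\phi,\psi)$, introduce the transform $\phi^c(y):=\min_{x\in X}\bigl(d(x,y)-\phi(x)\bigr)$. The triangle inequality shows $\phi^c\in\Lipschitz_1(d)$; then $d(x,x)=0$ together with the $1$-Lipschitzness of $\phi^c$ and the symmetry of $d$ yield $(\phi^c)^c=-\phi^c$. Since $\psi(y)\le\phi^c(y)$ and $\nu\ge 0$, and subsequently $\phi(x)\le(\phi^c)^c(x)=-\phi^c(x)$ and $\mu\ge 0$, replacing $(\phi,\psi)$ first by $(\phi,\phi^c)$ and then by $(-\phi^c,\phi^c)$ does not decrease the dual objective; the last pair has the form $(u,-u)$ with $u=-\phi^c\in\Lipschitz_1(d)$, and its objective is exactly $\sum_z u(z)(\mu(z)-\nu(z))$. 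Conversely every $(u,-u)$ with $u\in\Lipschitz_1(d)$ is dual-feasible, so the two maxima agree, which closes the chain of equalities.

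\textbf{Main obstacle.} The only non-formal point is the third step: checking the two properties of the transform $\phi\mapsto\phi^c$ (that it lands in $\Lipschitz_1(d)$, and that $c$-transforming a $1$-Lipschitz function negates it), which is precisely where the metric axioms—triangle inequality and symmetry—are used. Everything else is bookkeeping with LP duality and the definition of a coupling.
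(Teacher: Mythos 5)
Your proof is correct and takes exactly the route the paper itself indicates for this theorem: the paper offers no written proof, only the remark that the statement ``is an application of LP duality'' with pointers to Santambrogio and Villani, and your argument (weak duality, finite LP strong duality with two potentials $\phi,\psi$, then the $c$-transform reduction $\psi\mapsto\phi^c$, $\phi\mapsto-\phi^c$ to a single $1$-Lipschitz potential) is precisely the standard treatment those references give. All steps check out, including the attainment claims and the identities $\psi\le\phi^c$ and $(\phi^c)^c=-\phi^c$ for $1$-Lipschitz $\phi^c$.
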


The second term of the equality in \Cref{Kantorovich} shows that the distance $d(\mu,\nu)$ depends only on the difference $\xi = \mu - \nu$. For such functions we have $\sum_z \xi(z) = 0$ and $\sum_z \aval{\xi(z)} \leq 2$. Conversely, every function $\xi$ that satisfies $\sum_z \xi(z)=0$ is the difference of two probability functions if $\sum_z \aval{\xi(z)} \leq 2$. In fact, the assumptions made on $\xi$ imply $\sum _z \xi^+(z) = \sum_z \xi^-(z) \leq 1$. If the strict inequality holds, given any probability function $p$ we can choose $\alpha > 0$ such that both $\xi^+ + \alpha p$ and $\xi^- + \alpha p$ are probability functions.

If we ignore this restriction on the elements $\xi$, we obtain the vector space $M_0 (X)$ of zero-mass measure functions and we can hence define on this space the so-called Kantorovich-Bernstein norm (KB-norm),
\begin{equation}
\label{KBnorm}
  \normat{\KB}{\xi}  = \sup_{u \in \Lipschitz_1(d)} \sum_{z \in X} \xi(z) u(z) \ ,
\end{equation}
so that the K-distance is just the restriction of the KB-norm, i.e., $d(\mu,\nu) = \normat{\KB}{\mu-\nu}$.

\subsection{ Arens-Eells norm}
\label{sec:arens-eells-norm}

Let $\Lipschitz^+ (d)$ be the space of the Lipschitz functions  defined on the pointed metric space $(X,d,x_0)$, namely, the set of Lipschitz functions $u$ for which $u(x_0) = 0$ and where $x_0$ is a distinguished element of $X$. It turns out to be a Banach space by norm $\normof u_{\Lipschitz(d)}$, given by the smallest Lipschitz constant of $u$. The symbol $\Lipschitz_1^ +(d)$ denotes the unit ball and $\Extreme \Lipschitz_1^+(d)$ the set of its extreme points.

Each difference of delta functions belongs to $M_0(X)$ and the isometric property is verified through the dual formulation
\begin{multline}
\label{eq:ISOM}
  d(\delta_x,\delta_y) = \normat{\KB}{\delta_x - \delta_y} = \sup_{u \in \Lipschitz^+_1 (d)} \scalarof {\delta_x - \delta_y} u = \\ \sup_{u \in \Lipschitz^+_1(d)} (u(x) - u(y)) = d(x,y) \ .
\end{multline}

The difference of delta functions spans the whole space, i.e., every $\xi \in M_0(X)$ can be written as
\begin{equation}\label{eq:RAP}
  \xi = \sum_{x,y} a(x,y) (\delta_x - \delta_y) \ , \quad A = [a(x,y)]_{x,y \in X} \in \reals^{X \times X} \ .
\end{equation}

If we compute the KB-norm of a generic function $\xi$ in \cref{eq:RAP}, we get through \cref{eq:ISOM}
\begin{equation*}
  \normat{\KB}{\xi}  = \normat{\KB}{\sum_{x,y} a(x,y) (\delta_x - \delta_y)} \leq \sum_{x,y\in X} \aval{a(x,y)} d(x,y) \ .
\end{equation*}
The vector space $M_0(X)$ can be endowed with the following norm, in which case it will be called the Arens-Eells space $\AEspace X$. Here, we follow the presentation by \citet{weaver:2018-LA-2nd-ed}.

\begin{definition}\label{def:AEspace}
The norm $\normat {\ArensEells} {\xi}$ is defined by
\begin{equation}\label{eq:AEnorm}
\normat {\ArensEells} {\xi} = \inf \set{\sum_{x,y \in X} \aval {  a(x,y)} d(x,y) } \ ,
\end{equation}
where the inf is made on all the representations of $\xi$ in \cref{eq:RAP}.
\end{definition}

Arens-Eells construction has a wider range of application than finite metric spaces. In fact, in a more general setting,  Arens-Eells space is defined as the norm-closure of the space of all zero-mass measures with finite support on an arbitrary metric space $X$. It is also known in literature as a Lipschitz-free space over $X$ and frequently denoted by $\mathcal{F}(X)$.
  
  The Banach space $\AEspace X$ is a predual of $\Lipschitz^ +(d)$.  More specifically, the linear isometry $T: \AEspace X^* \rightarrow \Lipschitz^ +(d)$ is given by
    \begin{equation*}
  T(\phi) (x) = \phi(\delta_x - \delta_{x_0})
  \end{equation*}
  for every $\phi \in \AEspace X^*$,  $x \in X$ and where $x_0$ is the distinguished point of $X$.

Consequently,
\begin{equation}
\label{eq:DUAL}
\normat {\ArensEells} {\xi} =  \normat{\KB}{\xi} = \sup \set {\scalarof {\xi} {u} \mid u \in \Lipschitz_1^ +(d)}
\end{equation}
for all $\xi \in \AEspace X$. In addition, there exists a multi-mapping $J \colon \AEspace X \to \Lipschitz_1^ +(d)$ such that the alignment condition $\scalarof {\xi} {J(\xi)} = \normat {\ArensEells} {\xi}$ is satisfied. In our finite-dimensional setting, it holds  also the reflexivity property, $\AEspace X = \Lipschitz^+(X)^*$. 

Here, we just recall an important property of the Arens-Eells norm (see \citet{weaver:2018-LA-2nd-ed} for more details). 

\begin{proposition}
\label{prop:LARG}
The norm $\normat {\ArensEells} {\cdot}$ is the largest semi-norm on the space $\AEspace X$ which satisfies $\normof {\delta_x - \delta_y} \leq d(x,y)$. 

\begin{proof}
  If $\normof \cdot$ is a semi-norm that satisfies the claimed requirements, we have from \cref{eq:RAP} that
\begin{equation*}
\normof \xi = \normof {\sum_{x,y \in X} a(x,y) (\delta _x - \delta_y)} \leq \sum_{x,y \in X} \aval {  a(x,y)} d(x,y)   
\end{equation*}
is true for any representation of $\xi$ as a linear combination of differences of Dirac functions. Consequently, $\normof \xi \leq \normat {\ArensEells} {\xi}$.
\qed \end{proof}
\end{proposition}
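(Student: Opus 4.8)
The plan is to establish the two halves of the ``largest'' claim separately. First I would verify that $\normat{\ArensEells}{\cdot}$ is itself a member of the class of semi-norms under consideration, i.e.\ that it satisfies $\normat{\ArensEells}{\delta_x-\delta_y}\le d(x,y)$. This is immediate from \Cref{def:AEspace}: the element $\delta_x-\delta_y\in M_0(X)$ admits the trivial one-term representation $\delta_x-\delta_y = 1\cdot(\delta_x-\delta_y)$ of the form \eqref{eq:RAP}, so the infimum in \eqref{eq:AEnorm} is bounded above by $d(x,y)$; in fact \eqref{eq:ISOM} upgrades this to an equality. Hence the assertion of the proposition makes sense and $\normat{\ArensEells}{\cdot}$ is a candidate for the maximal element.

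Second, for the maximality itself, I would fix an arbitrary semi-norm $\normof{\cdot}$ on $\AEspace X$ with $\normof{\delta_x-\delta_y}\le d(x,y)$ for all $x,y\in X$, and an arbitrary $\xi\in\AEspace X$. Since in the finite setting $\AEspace X = M_0(X)$ and the differences of Dirac functions span this space, $\xi$ admits representations $\xi = \sum_{x,y\in X} a(x,y)(\delta_x-\delta_y)$ as in \eqref{eq:RAP}. Applying the triangle inequality and absolute homogeneity of the semi-norm term by term gives
\begin{equation*}
  \normof{\xi} = \normof{\sum_{x,y\in X} a(x,y)(\delta_x-\delta_y)} \le \sum_{x,y\in X}\aval{a(x,y)}\,\normof{\delta_x-\delta_y} \le \sum_{x,y\in X}\aval{a(x,y)}\,d(x,y)\ .
\end{equation*}
Taking the infimum of the right-hand side over all representations of $\xi$ yields, by \eqref{eq:AEnorm}, exactly $\normof{\xi}\le\normat{\ArensEells}{\xi}$, which is the desired maximality.

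I do not anticipate a genuine obstacle here; the proof is a direct application of the semi-norm axioms combined with the spanning property \eqref{eq:RAP} of the family $\set{\delta_x-\delta_y \mid x,y\in X}$. The only point worth a word of care is precisely that this family spans $M_0(X)$, so that the collection of representations of a given $\xi$ is nonempty and the infimum in \eqref{eq:AEnorm} is well defined for every $\xi$; this is recorded just above \eqref{eq:RAP}. Everything else is formal.
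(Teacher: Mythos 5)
Your proof is correct and follows essentially the same route as the paper's: apply the triangle inequality and homogeneity of the candidate semi-norm to an arbitrary representation \eqref{eq:RAP} and then take the infimum to obtain $\normof{\xi}\le\normat{\ArensEells}{\xi}$. Your additional preliminary check that $\normat{\ArensEells}{\cdot}$ itself satisfies $\normat{\ArensEells}{\delta_x-\delta_y}\le d(x,y)$ is a sensible (if routine) completion that the paper leaves implicit via \eqref{eq:ISOM}.
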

Another important property is that if $X_0$ is a nonempty subset of the metric space $X$, then the identity map takes $\AEspace{X_0}$ isometrically into $\AEspace{X}$, see Theorem 3.7 in \citet{weaver:2018-LA-2nd-ed}. 

\subsection{The cut seminorms}
\label{sub:CUTS}
We endow $M_0(X)$ by another (semi-)norm via the notion of cut semi-metrics defined below, see \citet{deza|laurent:1997}. A semi-metric (or pseudo-metric) is a relation on $X$ with all the properties of a distance but separation of points.

\begin{definition}
\label{def:CUT}
Let $S$ be a subset of a given finite set $X$. The relation $\delta_S$ defined in $X$ by
\begin{equation*}
 \delta_S(x,y) = 1 \quad \text{if}\quad \#[S \cap \set{x,y}] = 1 \quad \text{and} \quad  \delta_S(x,y) = 0 \quad \text{otherwise},
\end{equation*}
is a semi-metric called \emph{cut semi-metric}. The \emph{cut cone} $\CUT(X)$ is the set of distances in $X$ which lie in the cone generated by the cut semi-metrics, namely,
\begin{equation*}
\CUT(X) = \setof{d = \sum_{S \subseteq X}  \lambda_S \delta_S}{\lambda_S \geq 0,\quad \forall S \subseteq X} \ .
\end{equation*}
\end{definition}

A decomposition $d = \sum_{S \subseteq X} \lambda_S \delta_S$ will be said to be $x_0$-adapted if $x_0 \in S \Rightarrow \lambda_S = 0$.  Since $\delta_S = \delta_{\bar S}$, one can always turn a decomposition into an adapted one for some distinguished point $x_0 \in X$.

A key property of the cut cone is that $d \in \CUT(X)$ if and only if the associated metric space $(X,d)$ is $\ell_1$-embeddable, see Prop. 4.2.2 \citet{deza|laurent:1997}.

A representation $d = \sum_{S \subseteq X} \lambda_S \delta_S$ of a given metric is said to be a realization of $d$. In general, the realization of a given distance $d$ is not unique. A metric is called $\ell_1$-\emph{rigid} if its realization is unique.

In view of \Cref{def:CUT}, let us introduce the following family of semi-norms. 
\begin{definition}\label{def:cut-norm}
Given a finite set $X$ and a family of non negative scalar numbers  $C = \set{\lambda_S}_{S \subseteq X}$, set
\begin{equation*}
\normat{C}{\xi} = \sum_{S \subseteq X} \lambda_S \aval{\sum_{x \in S} \xi(x)}, \quad \forall \xi \in M_0(X) \ .
\end{equation*}
 \end{definition}
\begin{proposition}
The semi-norm $\normat{C}{\cdot}$ extends the semi-metric $d = \sum_{S \subseteq X} \lambda_S \delta_S$ contained in  $\CUT(X)$. Moreover, $\normat{C}{\xi} \leq \normat {\ArensEells} {\xi}$.
\end{proposition}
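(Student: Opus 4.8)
The statement has two parts. For the first part, I need to check that $\normat{C}{\cdot}$ restricted to the differences of Dirac functions recovers the semi-metric $d = \sum_S \lambda_S \delta_S$. The plan is a direct computation: for $\xi = \delta_x - \delta_y$ with $x \neq y$, evaluate $\sum_{z \in S} \xi(z)$. This sum is $1$ if $x \in S, y \notin S$; it is $-1$ if $y \in S, x \notin S$; and it is $0$ otherwise. Hence $\aval{\sum_{z \in S} \xi(z)} = \delta_S(x,y)$, so $\normat{C}{\delta_x - \delta_y} = \sum_S \lambda_S \delta_S(x,y) = d(x,y)$, as desired.

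For the second part, the natural route is to invoke \Cref{prop:LARG}: the Arens--Eells norm is the \emph{largest} semi-norm on $\AEspace X$ satisfying $\normof{\delta_x - \delta_y} \le d(x,y)$. So it suffices to verify that $\normat{C}{\cdot}$ is a genuine semi-norm on $M_0(X)$ and that it satisfies this same Dirac bound. The Dirac bound is exactly what the first part establishes (with equality, in fact). That $\normat{C}{\cdot}$ is a semi-norm is immediate: it is a non-negative combination of the maps $\xi \mapsto \aval{\sum_{x \in S}\xi(x)}$, each of which is a semi-norm (absolute value of a linear functional), so absolute homogeneity and the triangle inequality follow termwise, and finite non-negative sums of semi-norms are semi-norms. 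Therefore \Cref{prop:LARG} yields $\normat{C}{\xi} \le \normat{\ArensEells}{\xi}$ for all $\xi \in M_0(X) = \AEspace X$.

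Alternatively, and perhaps more in the spirit of the duality developed just above, one can argue directly: from the representation \cref{eq:RAP}, $\xi = \sum_{x,y} a(x,y)(\delta_x - \delta_y)$, the triangle inequality and homogeneity of $\normat{C}{\cdot}$ give $\normat{C}{\xi} \le \sum_{x,y} \aval{a(x,y)} \normat{C}{\delta_x - \delta_y} = \sum_{x,y} \aval{a(x,y)} d(x,y)$; taking the infimum over all representations and using \Cref{def:AEspace} gives $\normat{C}{\xi} \le \normat{\ArensEells}{\xi}$. This is essentially the proof of \Cref{prop:LARG} specialized to the present semi-norm, and I would likely present it this way to keep the argument self-contained.

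I do not anticipate a serious obstacle here; the only point that needs a word of care is that $d = \sum_S \lambda_S \delta_S$ is being treated as a (possibly degenerate) semi-metric rather than a metric, so "extends" should be read as "agrees with $d$ on pairs of Dirac differences" without any claim that $\normat{C}{\cdot}$ is a norm — indeed it is only a semi-norm precisely when $d$ fails to separate points. It may also be worth remarking that the realization $C$ of $d$ is part of the data: different realizations of the same $d \in \CUT(X)$ can give genuinely different semi-norms $\normat{C}{\cdot}$, all of which lie below $\normat{\ArensEells}{\cdot}$ and all of which extend the same $d$.
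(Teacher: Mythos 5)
Your proposal is correct and follows essentially the same route as the paper: verify that $\aval{\sum_{x\in S}(\delta_y-\delta_z)(x)}=\delta_S(y,z)$, note that $\normat{C}{\cdot}$ is a semi-norm, and invoke \Cref{prop:LARG} for the inequality. The extra remarks (the alternative self-contained argument via \cref{eq:RAP} and the dependence on the chosen realization) are accurate but do not change the substance.
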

\begin{proof}
Clearly any $\normat{C}{\cdot}$ is a semi-norm on $M_0(X)$. To show the second claim, it is enough to check that the semi-norms $\aval{\sum_{x \in S} \xi(x)}$ extend the cut semi-metrics $\delta_S$, for all $S \subseteq X$.
In fact, for all $y,z \in X$, we have
\begin{equation*}
\aval{\sum_{x \in S} [\delta_y(x) -  \delta_z(x)]} = \delta_S (y,z) \ .    
\end{equation*}
 \Cref{prop:LARG} provides the last claim.
\qed \end{proof}

It will be discussed in \Cref{subsect: tree-like} that for trees, and more generally for tree-like spaces, the Arens-Eells norm is exactly the cut norm for a special family $C$, i.e.,  $\normat{C}{\xi} = \normat {\ArensEells} {\xi}$.

Observe further that non-$\ell_1$-rigid metrics give rise to many different cut norms. We present below the classical Ex.~4.3.7 of \citet{deza|laurent:1997}. 

\begin{example}
\label{ex:1}
It is well-known that the discrete metric $(X,d)$ is $\ell_1$-rigid if and only if $\#X < 4$. For $\#X = n \geq 4$, the discrete metric admits several  distinct realizations. For instance,
\begin{equation*}
d= \frac{1}{2}\sum_{x \in X} \delta_{\set{x}} = \frac{1}{2(n-2)}\sum_{x \neq y } \delta_{\set{x,y}}
\end{equation*}
which generate, respectively, the two C-norms
\begin{equation*}
\normat{C_1}{\xi} = \frac{1}{2}\sum_{x \in X} \aval{\xi(x)} \ , \quad \normat{C_2}{\xi} = \frac{1}{2(n-2)}\sum_{x \neq y } \aval{\xi(x) + \xi(y)} \ .
\end{equation*}
As will be seen later we have $\normat{C_1}{\xi} = \normat {\ArensEells} {\xi}$, and so $\normat{C_2}{\xi} \leq \normat {\ArensEells} {\xi}$.  
\end{example}

\section{Distance induced by a graph}
\label{sec:distance-graph}
We commence by giving a useful notion that refines the adjacency property for points of a graph.
\begin{definition}\label{def:close}
Two vertices $x,y$ of a weighted graph $(X,w)$ are said to be \emph{close} if they are adjacent and, in addition, $d(x,y) = w(x,y)$, i.e., the path $xy$ is one of the shortest paths joining the points themselves.
\end{definition} 

Adjacent vertices are necessarily close in a tree. So too are all the adjacent pairs in an unweighted graph. Observe further that, in any path $x_1,x_2,\dots,x_n$ of minimum length, two adjacent vertices are necessarily close. This is the reason why it holds the equality
 \begin{equation}
 \label{eq:CZZ}
    d(x_1,x_n) = \sum_{i=1}^{n-1} d(x_i,x_{i+1})
  \end{equation}
along points of a path of minimal length.
  
To see this, suppose not. We would have $w(x_i,x_{i+1}) \geq d(x_i,x_{i+1})$ for all $i$ and $w(x_j,x_{j+1}) > d(x_j,x_{j+1})$ for some $j$. Therefore
  \begin{equation*}
    d(x_1,x_n) = \sum_{i=1}^{n-1} w(x_i,x_{i+1}) > \sum_{i=1}^{n-1} d(x_i,x_{i+1}) \ 
  \end{equation*}
  which contradicts the triangular inequality.
  
It is worth remarking that one could replace adjacent points with close points in \Cref{prop:lipshitz} too.
  
\subsection{Extreme points}
\label{sec:extr-points-lipsh}

It is known in literature a characterization of the extreme points of the unit ball of the normed space $\Lipschitz^+(d)$, for generic metric spaces, cf. \citet{farmer:1994}, \citet{smarzewski:1997}.

Here, we are concerned with a useful qualification that holds in finite spaces. For ease of the reader we provide a complete proof which essentially follows Th.~2.59 \citet{weaver:2018-LA-2nd-ed}.

\begin{theorem}
\label{th:WEA}
Let $(X,x_0)$ be a pointed finite metric space. A function $f \in \operatorname {Lip}_1^+(d)$ is extremal if and only if for every $x \in X$ there is a path $x_0,x_1,\dots,x_{n-1}$, with $x_{n-1}=x$, such that
\begin{equation*}
\aval{f(x_i) - f(x_{i-1})}
= d(x_i,x_{i-1})
\end{equation*}
for $i = 1,..,n-1$.
When the distance is induced by a graph, the path linking $x_0$ and $x$ can be taken to be a sequence of close points.
\end{theorem}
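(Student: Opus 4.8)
The plan is to prove the two implications separately, using the Arens--Eells duality (reflexivity $\AEspace{X} = \Lipschitz^+(d)^*$) to convert extremality of $f$ in the unit ball of $\Lipschitz^+_1(d)$ into a statement about which elements $\xi \in \AEspace{X}$ are \emph{aligned} with $f$, i.e.\ satisfy $\scalarof{\xi}{f} = \normat{\ArensEells}{\xi}$. The key observation is that $f$ is an extreme point of $\Lipschitz^+_1(d)$ if and only if $f$ is not the midpoint of two distinct norm-one functions; equivalently, the face of the dual ball exposed by any aligned functional is as large as possible, or — more usefully here — the "equality constraints" $\aval{f(x)-f(y)} = d(x,y)$ that $f$ activates must be rich enough to pin $f$ down uniquely.

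For the ``only if'' direction I would argue by contraposition. Suppose there is a vertex $x$ which cannot be joined to $x_0$ by a path of ``saturated'' edges (edges $x_{i-1}x_i$ on which $\aval{f(x_i)-f(x_{i-1})} = d(x_{i-1},x_i)$, and in the graph case one restricts to close points, which is legitimate by the remark following \Cref{def:close} and \Cref{prop:lipshitz}). Partition $X$ into the set $A$ of vertices reachable from $x_0$ through saturated edges and its complement $B \ni x$. Since no saturated edge crosses from $A$ to $B$, there is room to perturb $f$ on $B$: for sufficiently small $\varepsilon > 0$ the functions $f \pm \varepsilon \mathbf 1_B$ still lie in $\Lipschitz^+_1(d)$ (they agree with $f$ on $A \ni x_0$, so the base-point condition is preserved, and the only inequalities that could be violated are those on $A$–$B$ pairs, which have strict slack by construction, uniformly over the finite set $X$). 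Then $f = \tfrac12\big((f+\varepsilon\mathbf 1_B) + (f-\varepsilon\mathbf 1_B)\big)$ exhibits $f$ as a nontrivial convex combination, so $f$ is not extremal.

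For the ``if'' direction, assume the saturated-path condition holds for every $x$, and suppose $f = \tfrac12(g+h)$ with $g,h \in \Lipschitz^+_1(d)$; I want $g=h=f$. Fix $x$ and a saturated path $x_0,x_1,\dots,x_{n-1}=x$. On each edge $x_{i-1}x_i$ we have $\aval{f(x_i)-f(x_{i-1})} = d(x_{i-1},x_i)$; writing $f(x_i)-f(x_{i-1}) = \tfrac12\big((g(x_i)-g(x_{i-1})) + (h(x_i)-h(x_{i-1}))\big)$ and using that both summands have absolute value $\le d(x_{i-1},x_i)$ while their average has absolute value exactly $d(x_{i-1},x_i)$, the strict convexity of the interval $[-d,d]$ forces $g(x_i)-g(x_{i-1}) = h(x_i)-h(x_{i-1}) = f(x_i)-f(x_{i-1})$. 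Telescoping along the path from $x_0$ (where $f=g=h=0$) gives $g(x) = h(x) = f(x)$. Since $x$ was arbitrary, $g=h=f$, so $f$ is extremal. In the graph case, the same argument goes through verbatim with ``edge'' meaning an edge of the graph joining close points, the point being that \Cref{prop:lipshitz} (in its ``close points'' form) guarantees the Lipschitz inequality on such edges is the only thing one needs to track.

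The main obstacle — really the only delicate point — is the ``only if'' direction: one must check carefully that the perturbation $f \pm \varepsilon \mathbf 1_B$ stays $1$-Lipschitz. This needs both the reduction to checking the Lipschitz condition edge-by-edge (so that only finitely many inequalities matter and a uniform $\varepsilon$ exists) and the fact that no \emph{saturated} edge leaves $A$; for edges within $A$ or within $B$ the perturbation changes nothing, and for the finitely many $A$–$B$ edges there is strict slack by the definition of $A$. One should also note that $B$ is nonempty (it contains $x$) and $A$ is nonempty (it contains $x_0$), so the convex combination is genuinely nontrivial. The graph version requires, in addition, the earlier remark that close points may replace adjacent points in \Cref{prop:lipshitz}, which ensures the saturated-path condition can be met within the close-point subgraph without loss.
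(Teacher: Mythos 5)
Your proof is correct, and the sufficiency direction is essentially the paper's argument in different notation: writing $f=\tfrac12(g+h)$ and forcing equal increments along a saturated path is the same as the paper's ``$\aval{a}=d$, $\aval{a\pm h}\leq d$ implies $h=0$'' step applied to $u=\tfrac12(g-h)$ (your appeal to ``strict convexity of the interval'' is loose wording, but the underlying fact that $\pm d$ are extreme points of $[-d,d]$ is exactly what is needed). Where you genuinely diverge is the necessity direction. The paper perturbs by the slack potential $u(x)=\min\bigl[\sum d(z_i,z_{i-1})-\sum\aval{f(z_i)-f(z_{i-1})}\bigr]$ over close-point paths from $x_0$; this $u$ satisfies $\aval{u(y)-u(x)}+\aval{f(x)-f(y)}\leq d(x,y)$ identically, so $f\pm u$ lie in the unit ball with no smallness argument, and $u(\bar x)>0$ exactly when the saturated-path condition fails at $\bar x$. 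You instead cut $X$ into the saturated-reachable set $A$ and its complement $B$ and perturb by $\varepsilon\mathbf 1_B$, which changes differences only across the cut, where every pair has strict slack; finiteness then supplies a uniform $\varepsilon$. Both implement the same idea (perturb on the part of the space not pinned to $x_0$ by saturated edges), but yours is combinatorially simpler and makes the role of the cut explicit, at the cost of an $\varepsilon$-smallness step, while the paper's potential is slicker in that it works in one shot and directly produces the competitor pair $f\pm u$. Your cut argument also handles the general finite metric space case uniformly (taking all pairs as edges), whereas the paper only writes out the graph case; and your bookkeeping of the delicate points (nonemptiness of $A$ and $B$, preservation of the base-point condition, reduction to close points via the remark after \Cref{prop:lipshitz}) is accurate.
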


\begin{proof}
Suppose a function $f$ satisfies the stated condition and consider the functions $f \pm u \in \Lipschitz_1^+(d)$. We must show that $u = 0$. 

Fixing $x \in X$, by hypothesis there exists a path $x_0,x_1, \dots,x_{n-1} = x$, with \\ $\aval{f(x_i) - f(x_{i-1})}
= d(x_i,x_{i-1})$.

Further, in view of \cref{prop:lipshitz}, it holds
\begin{equation*}
\aval{f(x_i) - f(x_{i-1}) + u(x_i) - u(x_{i-1})}
\leq d(x_i,x_{i-1})    
\end{equation*}
as well as
\begin{equation*}
\aval{f(x_i) - f(x_{i-1}) - u(x_i) + u(x_{i-1})}
\leq d(x_i,x_{i-1}).    
\end{equation*}
Fixing the index $i$ and setting, for short,  $ a = f(x_i)-f(x_{i-1})$, $ h = u(x_i)-u(x_{i-1})$ and $d=d(x_i,x_{i-1})$, we get the three conditions
\begin{equation*}
\aval {a} =d,\ \aval {a+h} \leq d,\  \aval {a-h} \leq d
\end{equation*}
which imply necessarily $ h = 0$.

Since $u(x_0) =0$ it follows that $u$ vanishes along that path and so $u(x) = 0$. In turn, this implies $u(x) = 0$ for each $x \in X$, as desired.

As far as it concerns the necessity condition, we shall treat the case where the finite space is a graph. Assume that the condition stated fails for some $\bar x$ and for every path $x_0,x_1,\dots, x_{n-1} = \bar x$ for which the points $x_i,x_{i+1}$ are close.

Define the following function $u: X \rightarrow \reals$
\begin{equation*}
u(x) = \min \left[ \sum_{i=1}^{m-1} d(z_i,z_{i-1}) - \sum_{i=1}^{m-1} \aval{f(z_i) - f(z_{i-1})} \right]   \ . 
\end{equation*}
where the minimum is taken over all the sequences of close vertices from $x_0$ to $x$.

Clearly, $u(\bar x) >0$ in that, by construction,
\begin{equation*}
\sum_{i=1}^{n-1} \aval{f(x_i) - f(x_{i-1})}  < \sum_{i=1}^{n-1} d(x_i,x_{i-1}) \ 
\end{equation*}
holds for all the paths linking $x_0$ and $\bar x$. 

Take now any pair $x,y \in X$ of close points. Any sequence linking $x_0$ and $x$ can be extended to a sequence linking $x_0$ and $y$, by adding the additional point $y$ preserving the property of being a sequence of close points. 

It follows 
\begin{equation*}
u(y) \leq u(x) + d(x,y) - \aval{f(x) - f(y)} \     
\end{equation*}
for each pair of close vertices.
Switching $x$ and $y$, we get by some algebra
\begin{equation*}
\aval{u(y) - u(x)} + \aval{f(x) - f(y)} \leq d(x,y)   
\end{equation*}
that in turn implies the two functions $f\pm u$ are 1-Lipschitz for close points. As already discussed this entails that $f\pm u \in \operatorname {Lip}_1^+(d)$ with $u \neq 0$, which is a contradiction because $f$ was assumed to be extremal.
\qed \end{proof}

Let us look at some specific classes of graphs. In the first one, we are dealing with a straightforward application that  does not require a further proof.
\begin{proposition}
\label{prop:lipextreme}
In a weighted tree, a function $u$ is extremal in the unit ball of ${Lip}_1^+(d)$ if, and only if, $\aval {u(x)-u(y)} = d(x,y)$ for each pair of adjacent vertices.
\end{proposition}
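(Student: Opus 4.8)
The plan is to read this off directly from \Cref{th:WEA}. Two features of a tree do the work: every pair of adjacent vertices is close (remarked after \Cref{def:close}), so in \Cref{th:WEA} the phrase ``sequence of close points'' amounts to nothing more than ``walk along edges''; and between any two vertices of a tree there is a unique simple path, every edge of which is traversed by any walk joining those two vertices.

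\emph{Sufficiency.} Suppose $\aval{u(x)-u(y)} = d(x,y)$ for every edge $xy$. Given $x \in X$, let $x_0 = z_0, z_1, \dots, z_{m-1} = x$ be the unique simple path from $x_0$ to $x$. Each $z_{i-1}z_i$ is an edge, hence $\aval{u(z_i) - u(z_{i-1})} = d(z_i,z_{i-1})$, so the hypothesis of \Cref{th:WEA} holds at $x$. Since $x$ is arbitrary, $u$ is extremal; note that this half uses only the general sufficiency clause of \Cref{th:WEA} and not the refinement to close points.

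\emph{Necessity.} Suppose $u$ is extremal and let $xy$ be any edge; relabel so that the unique simple path from $x_0$ to $y$ ends with $xy$ (one of the two orientations of the edge has this property). By \Cref{th:WEA} there is a walk $x_0 = z_0, z_1, \dots, z_{m-1} = y$ of close vertices with $\aval{u(z_i) - u(z_{i-1})} = d(z_i,z_{i-1})$ for every $i$. In a tree such a walk traverses every edge on the simple path from $x_0$ to $y$, in particular $xy$; so $\set{z_{j-1},z_j} = \set{x,y}$ for some $j$, giving $\aval{u(x)-u(y)} = d(x,y)$. As $xy$ ranges over all edges---equivalently, over all pairs of adjacent vertices of the tree---the conclusion follows.

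The only point requiring a word of care is the combinatorial fact that a walk in a tree joining two vertices uses every edge of the simple path between them; this is standard (deleting a backtrack $z_{i-1},z_i,z_{i-1}$ from the walk preserves its endpoints, and iterating produces the simple path, so none of its edges can have been missing). Everything else is immediate from \Cref{th:WEA}, and I do not anticipate any real obstacle---the proposition is genuinely just the tree specialization of that theorem.
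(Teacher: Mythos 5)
Your proof is correct and is exactly the argument the paper has in mind: the paper states \Cref{prop:lipextreme} as ``a straightforward application that does not require a further proof'' of \Cref{th:WEA}, and your write-up supplies precisely the omitted details (in a tree adjacent and close coincide, sufficiency via the unique simple path from $x_0$, necessity via the fact that any walk of close points from $x_0$ to $y$ must traverse every edge of that simple path, in particular the chosen edge). No gaps.
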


Next consider a set X equipped by the discrete distance. In another words, $X = K_n$ is the unweighted complete graph. Observe that the distance admits the realization $d = \frac{1}{2} \sum_{x \in X} \delta_{\set{x}}$.

\begin{proposition}
\label{prop:GGG}
Let $d$ be the discrete distance. The function $u \in \Extreme \operatorname {Lip}_1^+(d)$ if, and only if, $u = \pm I_Y$, where $I_Y$ is the indicator function of a nonempty subset $Y \subseteq X \setminus{x_0} $.
\end{proposition}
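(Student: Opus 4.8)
The plan is to derive this as a corollary of \Cref{th:WEA} specialized to the discrete metric, where $d(x,y)=1$ for all $x\neq y$, using the fact that $1$-Lipschitz functions for the discrete distance are exactly those taking values in an interval of length at most $1$ after being normalized at $x_0$. First I would observe that $\Lipschitz_1^+(d)$ for the discrete distance consists precisely of the functions $u\colon X\to\reals$ with $u(x_0)=0$ and $\sup_{x,y}\aval{u(x)-u(y)}\leq 1$; equivalently, the range of $u$ is contained in some interval $[m,m+1]$ with $m\leq 0\leq m+1$. The goal is to show the extreme points of this (compact convex) set are exactly $\pm I_Y$ for nonempty $Y\subseteq X\setminus\{x_0\}$.

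For the ``if'' direction, I would verify directly that $I_Y$ is extremal: it lies in $\Lipschitz_1^+(d)$ since $I_Y(x_0)=0$ and $\aval{I_Y(x)-I_Y(y)}\in\{0,1\}$. To apply the sufficiency part of \Cref{th:WEA}, fix any $x\in X$. If $x\in Y$, take the two-point path $x_0,x$; then $\aval{I_Y(x)-I_Y(x_0)}=1=d(x_0,x)$, as required. If $x\notin Y$, pick any $y_0\in Y$ (nonempty), and take the path $x_0,y_0,x$: we have $\aval{I_Y(y_0)-I_Y(x_0)}=1=d(x_0,y_0)$ and $\aval{I_Y(x)-I_Y(y_0)}=1=d(y_0,x)$. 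Since every pair is at distance $1$ these are trivially sequences of close points in $K_n$, so \Cref{th:WEA} gives that $I_Y$ is extremal; the same argument (or symmetry $u\mapsto-u$) handles $-I_Y$.

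For the ``only if'' direction, let $u\in\Extreme\Lipschitz_1^+(d)$. By \Cref{th:WEA}, for every $x\in X$ there is a path from $x_0$ to $x$ along which consecutive differences of $u$ have absolute value exactly $1$; in particular, picking the two-point path or a longer one, $u(x)$ must differ from $u$ of its predecessor by exactly $\pm1$. I would argue that this forces $u$ to take only two values differing by exactly $1$: since $u(x_0)=0$, walking along such a path the partial values stay within an interval of length $1$ (by the $1$-Lipschitz constraint on $K_n$, all values lie in some $[m,m+1]$), yet each step jumps by a full unit, so after the first step we are at one endpoint of that interval and every subsequent step must toggle between the two endpoints $m$ and $m+1$; hence $u(x)\in\{m,m+1\}$ for all $x$. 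Because $u(x_0)=0$ we have $m\in\{0,-1\}$. If $m=0$ then $u=I_Y$ with $Y=u^{-1}(1)$, and $Y$ is nonempty since otherwise $u\equiv0$ is not extremal (it is the midpoint of $I_{\{z\}}$ and $-I_{\{z\}}$ for any $z\neq x_0$), and $x_0\notin Y$; if $m=-1$ then $u=-I_Y$ with $Y=u^{-1}(-1)$, again nonempty and avoiding $x_0$. This gives the claimed characterization.

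The main obstacle I anticipate is the ``only if'' step: turning the path condition of \Cref{th:WEA} into the rigid two-value conclusion requires combining the global range bound (range in an interval of length $\leq1$, valid because any two points are at distance $1$) with the fact that \emph{every} step along the guaranteed path is a full unit jump, so one must be careful that the path's intermediate values cannot ``escape'' — but since they all obey the same interval bound and each consecutive pair differs by exactly $1$, the values are pinned to the two endpoints. A minor subtlety is confirming nonemptiness of $Y$ (equivalently, ruling out $u\equiv0$), which follows since the zero function is visibly not extremal once $\#X\geq2$.
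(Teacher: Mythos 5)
Your proposal is correct and follows essentially the same route as the paper: both directions are read off from \Cref{th:WEA}, with the identical two-point path $x_0,x$ for $x\in Y$ and three-point path $x_0,y_0,x$ otherwise for sufficiency, and the necessity argument likewise combines the global Lipschitz bound (range pinned to $[0,1]$ or $[-1,0]$ by the first unit jump) with the forced unit jumps along the guaranteed path to exclude any third value. The paper phrases that last step as a contradiction at a hypothetical intermediate value $0<u(\bar x)<1$, whereas you walk the path directly and observe the values must toggle between the two endpoints, but the content is the same.
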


\begin{proof}
The functions $\pm I_Y$ are surely extremal. Actually, if we pick $x \in Y$, the path $x_0x$ satisfies the sufficient conditions of \Cref{th:WEA}. While, if $x \notin Y$, the path $x_0x_1x$ does, where $x_1$ is any point of $Y$.

To show that there are no others, we put in place the necessary conditions. If $x_0x_1,..x_n$ is any sequence claimed in \Cref{th:WEA}, then $x_1 = \pm 1$. Since $u \in \operatorname {Lip}_1^+(d)$, we infer that either $0 \leq u(x) \leq u(x_1) = 1$ or $-1 \leq u(x) \leq 0$.

Consider the positive case, the other is similar. Suppose by contradiction that the function takes at least three values. Hence
\begin{equation*}
    0= u(x_0) < u(\bar x) < u(x_1)=1 \ .
\end{equation*}
holds for some $\bar x \in  X$. But then for any path linking $x_0$ and $\bar x$ the necessary condition of \Cref{th:WEA} fails. 
\qed \end{proof}

\begin{remark}
\label{remark:SSS}
A generalization of the previous result gives the extreme points for the metric space $(X,d)$ where $d(x,y) = \phi(x) + \phi(y)$, for $x \neq y$, being $\phi: X \rightarrow \reals$ a fixed strictly positive function. $K_n$ is just the special case $\phi = 1/2$.

An almost identical proof to \Cref{prop:GGG} leads to the extremal functions $\pm f$, given by
\begin{equation*}
f(x) = \phi(x) + \phi(x_0) \ \ \text{if }x \in Y ,\quad f(x) = -\phi(x) + \phi(x_0) \ \  \text{if}  \ x \in \Bar{Y}\setminus{x_0} 
\end{equation*}
and $Y \subseteq X \setminus{x_0} $ is any nonempty set. 
\end{remark}
Let us calculate the K-distance  for this last example. 

\begin{example}  If $(X,\phi)$ is the metric space of \Cref{remark:SSS}, then 
\begin{equation}\label{XXX}
\normat {\ArensEells} {\xi} = \sum_{x \in X} \phi(x) \aval{\xi(x)}.
\end{equation}
Actually, by \cref{eq:DUAL} we have to seek for an extremal function $\pm f$ that maximizes 
$\sum_{x \in X} f(x) \xi(x)$.

For the extremal functions $f$, we must maximize the amount 
\begin{equation*}
\sum_{x \in Y} \xi(x) \phi(x) - \sum_{x \in \Bar{Y}\setminus{x_0}} \xi(x)\phi(x) -\xi(x_0)\phi(x_0)
\end{equation*}
while, if the functions is $-f$ we have
\begin{equation*}
-\sum_{x \in Y} \xi(x) \phi(x) + \sum_{x \in \Bar{Y}\setminus{x_0}} \xi(x)\phi(x) +\xi(x_0)\phi(x_0).
\end{equation*}
Hence the maximum value will be given by $\sum_{x \in X} \phi(x) \aval{\xi(x)}$.

\end{example}
This metric admits the realization $d= \sum_{x \in X} \phi(x)  \delta_{\set{x}}$ and it also worth remarking that it is a tree metric. In fact, $(X,d)$ can be embedded into the star graph $V=X \cup \set{r}$ having edges $(x,r)$ and weights $w(x,r)=\phi(x)$. So $X$ agrees with the leaves of this tree.

\begin{example}[Linear order]
\label{ex:lin}
Let the graph be the total ordering $1 \rightarrow 2 \rightarrow \cdots \rightarrow n $, with positive weights $w_{i,i+1} = d(i,i+1) = d_i$.
By \Cref{prop:lipextreme}, the extreme points of the unit ball $\Lipschitz_{1}^+( d ) $ are the functions $u$ satisfying $\aval{u_{i}-u_{i+1}} = d_i $, for $i= 1,2,.., n-1$. It is not difficult to get that
\begin{equation*}
\normat{\ArensEells} {\xi} = \sum_{i=1}^{n-1} d_i \left\vert \Xi(i) \right\vert .
\end {equation*}
where $\Xi(i) = \xi _1 +\xi _2+\dots+ \xi _{i} $.
\end{example}

In this last example the K-distance reduces to the ordinary distance between the two cumulate functions. This is of some interest in the present paper, because it will be generalized to trees in \Cref{sec:trees}.

One of the tools useful to study the case of trees relies on the detection of the extreme points of the unit ball of $\operatorname {Lip}^+(d)$, as described in the next proposition.

\begin{proposition}
\label{prop:exttree}
Let $T=(X,w)$ be a rooted tree. A function $u \in \Extreme \operatorname {Lip}_1^+(d)$ if and only if it is of the type
\begin{equation}
\label{eq:LIPf}
u_\epsilon(x) = \sum_{y \preceq x} d(y,y^+) \epsilon(y) \ ,
\end{equation}
for all $x \in X \setminus{x_0}$, and $u_\epsilon(x_0) = 0$ otherwise, where $\epsilon$ is any given $\epsilon: X \setminus{x_0} \rightarrow \set{-1,1}$.
\end{proposition}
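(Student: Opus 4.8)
The plan is to reduce the statement entirely to \Cref{prop:lipextreme}, which already characterizes $\Extreme \Lipschitz_1^+(d)$ on a weighted tree as the set of $u$ with $\aval{u(x)-u(y)} = d(x,y)$ on every adjacent pair. The first thing I would record is that in a \emph{rooted} tree the adjacent pairs are exactly the parent--child pairs $\set{y,y^+}$ with $y \in X\setminus\set{x_0}$, and that $d(y,y^+) = w(y,y^+) > 0$ for each of them; hence \Cref{prop:lipextreme} says that $u$ is extremal precisely when $\aval{u(y)-u(y^+)} = d(y,y^+) > 0$ for every non-root $y$.

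For the ``only if'' direction I would start from an extremal $u$. Since $u(y)-u(y^+) \neq 0$ for every non-root $y$, the sign $\epsilon(y) := \Varsig\big(u(y)-u(y^+)\big) \in \set{-1,1}$ is well defined and $u(y)-u(y^+) = d(y,y^+)\,\epsilon(y)$. Fixing $x \in X\setminus\set{x_0}$, I would then use the standard fact that the ancestor set $\setof{y}{y \preceq x}$ is the vertex set of the unique path from $x_0$ to $x$, with consecutive vertices forming parent--child pairs, and telescope along this path:
\[
u(x) \;=\; u(x) - u(x_0) \;=\; \sum_{x_0 \prec y \preceq x}\big(u(y)-u(y^+)\big) \;=\; \sum_{x_0 \prec y \preceq x} d(y,y^+)\,\epsilon(y)\,,
\]
which is exactly the formula \cref{eq:LIPf} (the term $y=x_0$ being vacuous since $x_0^+$ is undefined), together with $u(x_0)=0$ because $u\in\Lipschitz^+(d)$.

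For the ``if'' direction I would take an arbitrary $\epsilon\colon X\setminus\set{x_0}\to\set{-1,1}$, define $u_\epsilon$ by \cref{eq:LIPf} and $u_\epsilon(x_0)=0$, and note that since the ancestors of a non-root $y$ are those of $y^+$ together with $y$ itself, one has $u_\epsilon(y)-u_\epsilon(y^+) = d(y,y^+)\epsilon(y)$, so that $\aval{u_\epsilon(y)-u_\epsilon(y^+)} = d(y,y^+) = w(y,y^+)$ on every edge. Then \Cref{prop:lipshitz} gives $u_\epsilon \in \Lipschitz_1(d)$, hence $u_\epsilon \in \Lipschitz_1^+(d)$, and since equality holds on every adjacent pair, \Cref{prop:lipextreme} makes $u_\epsilon$ extremal.

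The whole argument is bookkeeping with the tree order; the only delicate points are the well-definedness of $\epsilon$ (which uses strict positivity of the edge weights, so that $u$ genuinely separates $y$ from $y^+$) and the telescoping along the ancestor chain, and I do not expect any genuine obstacle beyond those.
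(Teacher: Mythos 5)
Your argument is correct and is essentially the paper's own proof written out in full: the paper simply observes that the $u_\epsilon$ are generated by the recursion $u_\epsilon(y) = u_\epsilon(y^+) + d(y,y^+)\epsilon(y)$ with $u_\epsilon(x_0)=0$ and invokes \Cref{prop:lipextreme}, which is exactly your telescoping along the ancestor chain in one direction and your edge-by-edge verification in the other. Your explicit treatment of the well-definedness of $\epsilon$ via the strict positivity of the edge weights is a detail the paper leaves implicit, but there is no difference in substance.
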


\begin{proof}
It suffices to remark that the functions $u_\epsilon$ are recursively generated by the equation
\begin{equation}
\label{eq:RECU}
u_\epsilon(y) = u_\epsilon(x) + d(x,y)\epsilon (y), \quad \forall y \in \chof x
\end{equation}
with initial condition $u(x_0) = 0$. The desired result is a consequence of \Cref{prop:lipextreme}.  
\qed \end{proof}

\subsection{A support property}

Many coefficients $a(x,y)$ of the linear combination in \cref{eq:RAP} are, in fact, not needed. They can be avoided if the purpose is to compute the inf of \cref{eq:AEnorm}. The following theorem is a key result that will be repeatedly used in the following sections.

\begin{theorem}
  \label{prop:Vague}
Assume that the distance $d$ is generated by a weighted graph. The class of functions $a(x,y)$, employed in \cref{eq:AEnorm}, can be restricted to the one satisfying the following two conditions:

\begin{enumerate}[label = \roman*)]
\item\label{item:vague-2} if $a(x,y) \neq 0$, then $x$ and $y$ are close.
\item\label{item:vague-1} the graph $\setof{(x,y)}{a(x,y) \neq 0}$ has no cycle.
\end{enumerate}
\end{theorem}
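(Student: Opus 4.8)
The plan is to prove the two reductions essentially one at a time, first achieving (i) by a local "rerouting along geodesics" argument, and then (ii) by a cycle-cancellation argument, checking at each stage that the value $\sum_{x,y}\aval{a(x,y)}d(x,y)$ does not increase. Throughout I would fix an arbitrary representation $\xi=\sum_{x,y}a(x,y)(\delta_x-\delta_y)$ and think of $A=[a(x,y)]$ as a (signed) flow on the complete graph on $X$: the contribution of the pair $(x,y)$ to $\xi$ is $a(x,y)(\delta_x-\delta_y)$, which is the same as $-a(x,y)(\delta_y-\delta_x)$, so without loss I may assume $a$ is supported on ordered pairs, or symmetrize as convenient.

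For step (i): suppose $a(x,y)\neq 0$ but $x,y$ are not close, i.e.\ $d(x,y)<w(x,y)$ is impossible since $d\le w$ always, so "not close" means $x,y$ are \emph{not} adjacent or the edge is not a shortest path; in either case there is a shortest path $x=z_0,z_1,\dots,z_k=y$ in the graph with $k\ge 2$ and, by \eqref{eq:CZZ}, $d(x,y)=\sum_{i=1}^{k}d(z_{i-1},z_i)$ with every consecutive pair $z_{i-1},z_i$ close. Now use the telescoping identity
\begin{equation*}
\delta_x-\delta_y=\sum_{i=1}^{k}(\delta_{z_{i-1}}-\delta_{z_i}),
\end{equation*}
so replacing the single term $a(x,y)(\delta_x-\delta_y)$ by $\sum_{i=1}^k a(x,y)(\delta_{z_{i-1}}-\delta_{z_i})$ changes the cost of that term from $\aval{a(x,y)}d(x,y)$ to $\sum_{i=1}^k \aval{a(x,y)}d(z_{i-1},z_i)=\aval{a(x,y)}d(x,y)$ — exactly equal, not merely $\le$. (One must be mildly careful that adding these new contributions to already-present coefficients can only help, by the triangle inequality $\aval{a+b}d\le\aval a d+\aval b d$ on each edge.) Since $X$ is finite and there are finitely many pairs, iterating this replacement over all non-close pairs terminates and produces a representation supported on close pairs with no larger cost. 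Strictly, to guarantee termination I would use a potential such as the total number of edges of the complete graph carrying nonzero weight, or better, argue that after one pass every nonzero $a(x,y)$ sits on a close pair because a close pair is never "expanded" — close pairs are fixed points of the operation.

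For step (ii): now $\{(x,y):a(x,y)\neq 0\}$ consists of close pairs, i.e.\ it is a subgraph $H$ of the graph $G$. If $H$ contains a cycle $C: y_0,y_1,\dots,y_m=y_0$, then $\sum_{i=1}^m(\delta_{y_{i-1}}-\delta_{y_i})=0$ in $M_0(X)$; hence for any scalar $t$ we may subtract $t\sum_{i=1}^m(\delta_{y_{i-1}}-\delta_{y_i})$ from the representation without changing $\xi$. The cost as a function of $t$ is $\sum_{i=1}^m\aval{a(y_{i-1},y_i)-t}\,d(y_{i-1},y_i)$ plus terms independent of $t$ — a convex piecewise-linear function of $t$ whose breakpoints are the values $a(y_{i-1},y_i)$; choosing $t$ to be one of these breakpoints minimizes it and, in particular, zeroes out at least one edge of the cycle while not increasing the total cost. (Here I orient the cycle consistently and identify $a(y,x)$ with $-a(x,y)$ so the subtraction is well-defined edgewise; this is the one place requiring a little bookkeeping about orientations.) Repeating, each step strictly decreases the number of edges in the support, so after finitely many steps the support is a forest, i.e.\ acyclic, and we still have cost $\le$ the original — and the support is still contained in the close pairs, since we only ever delete edges. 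Taking the infimum over representations on both sides then gives that the $\AEspace X$-norm is computed using only representations satisfying (i) and (ii).

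The main obstacle I anticipate is purely organizational rather than deep: keeping the signed/oriented bookkeeping consistent so that "subtract a cycle" and "expand along a path" are literally identities in $M_0(X)$ and so that the cost is controlled \emph{edge by edge} (the triangle inequality on coefficients on a single edge is what makes everything work). A secondary subtlety is ordering the two reductions: doing (i) first is important, because the cycle-cancellation in (ii) only deletes edges and hence preserves the "close support" property, whereas doing (ii) first and then (i) could reintroduce cycles when a long edge is expanded into a path that reconnects the support — so the argument as organized above, (i) then (ii), is the right order and no iteration between the two phases is needed.
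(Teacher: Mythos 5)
Your proposal is correct and follows essentially the same route as the paper: part (i) by telescoping each non-close pair along a geodesic of close vertices (using \cref{eq:CZZ}), and part (ii) by subtracting $t$ times a directed cycle and minimizing the resulting convex piecewise-linear function of $t$ at a breakpoint to kill an edge of the support. The only cosmetic difference is that the paper phrases (ii) as a contradiction with a minimal optimal representation, while you run it as a direct induction on the number of support edges; the content is identical.
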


The substantive difference between the two above properties is that condition (ii) pertains metric spaces while the graph structure is not required. Indeed, this cycle-free property holds in general metric spaces. This has been remarked in \cite[Prop. 24]{bergman:2008}. Analogous results are given by \cite[Prop. 7.2] {barvinok:2002}.   

\begin{proof} \emph{\Cref{item:vague-2}} 
Let us first assume that in \cref{eq:RAP} there is a non-zero term $a(x,y) (\delta_x - \delta_y)$, where $x$ and $y$ are not adjacent. Let $x_1,x_2,\dots,x_n$ be a geodesic path joining $x =x_1$ to $y = x_n$. By \cref{eq:CZZ}, 
  \begin{equation*}
    d(x_1,x_n) =  \sum_{i=1}^{n-1} d(x_i,x_{i+1}) \quad \text{and} \quad \delta_x - \delta_y = \sum_{i=1}^{n-1} (\delta_{x_i} - \delta_{x_{i+1}})
  \end{equation*}
Therefore, if the addendum $a(x,y) (\delta_x - \delta_y)$ is replaced by

$a(x,y) \sum_{i=1}^{n-1} (\delta_{x_i} - \delta_{x_{i+1}})$, 
the contribution to the norm will remain the same, since
\begin{equation*}
\aval {a(x,y)} \sum_{i=1}^{n-1} d(x_i,x_{i+1})= \aval {a(x,y)} d(x,y).     
\end{equation*}

Consequently, the term $a(x,y)(\delta_x - \delta_y)$ may be removed, whenever $x$ and $y$ are not adjacent.

Suppose now that $x$ and $y$ are adjacent but not close. That means that a geodesic path $x_1,x_2,\dots,x_n$ exists with $x =x_1$ and $y = x_n$, and the strict inequality $d(x,y) > \sum_{i=1}^{n-1} d(x_i,x_{i+1})$ holds. In this case,
\begin{equation*}
\aval {a(x,y)} \sum_{i=1}^{n-1} d(x_i,x_{i+1}) < \aval {a(x,y)} d(x,y) \ .     
\end{equation*}
Once again the term may be removed, if the pair of vertices is not close. 

\emph{\Cref{item:vague-1}}
The argument will unfold along the following lines. Let
\begin{equation*}
\xi = \sum_{x,y \in X} \widetilde a(x,y) (\delta _x - \delta_y) 
\end{equation*}
be an optimal representation of $\xi$. That is, let
$\normat {\ArensEells} {\xi} = \sum_{x,y \in X} \aval {  \widetilde a(x,y)} d(x,y)$. 

In addition, let us suppose that it is a minimal, i.e, it contains the minimum number of non-vanishing coefficients $a(x,y)$. A minimal representation does exist but clearly it is not a unique one. For instance, as $a(x,y)(\delta_x - \delta_y) = -a(x,y)(\delta_y - \delta_x)$, any change of signs for the coefficients produces another optimal minimal representation.

Suppose by contradiction  that in a minimal representation of $\xi$  there is a set of non-zero coefficients $\widetilde a(x,y)$, $(x,y) \in \mathcal S$, whose graph $(S,\mathcal S)$ is a  cycle. We can write
\begin{equation*}
 \xi = \sum_{(x,y) \in \mathcal S} \widetilde a(x,y)(\delta_{x} - \delta_{y}) + A,  
\end{equation*}
where $A$ includes all the other remaining terms.

Moreover, by arranging signs of coefficients, we can suppose that the cycle is directed, so that we have $\sum_{(x,y) \in \mathcal S} (\delta_{x} - \delta_{y}) = 0$, 
then 
\begin{equation*}
 \xi = \sum_{(x,y) \in \mathcal S}[\widetilde a(x,y) - t] (\delta_{x} - \delta_{y}) + A 
\end{equation*}
holds for any scalar $t$. It follows that 
\begin{equation*}
\normat {\ArensEells} {\xi} = \inf_{t \in \reals} \sum_{(x,y) \in \mathcal S} \aval {  \widetilde a(x,y) - t} d(x,y) + A  \ .
\end{equation*}

On the other hand, the scalar function
\begin{equation*}
 t \mapsto  \sum_{(x,y) \in \mathcal S} \aval {  \widetilde a(x,y) - t} d(x,y)
\end{equation*}
is convex and piece-wise linear. Consequently, it attains its minimum value at some point $t = \widetilde a(\bar x,\bar y)$, with $(\bar x,\bar y) \in \mathcal{S}$. Hence it would be
\begin{equation*}
\normat {\ArensEells} {\xi} = \sum_{(x,y) \in \mathcal S \setminus{(\bar x, \bar y)}} \aval {  \widetilde a(x,y) - \widetilde a(\bar x, \bar y)} d(x,y) + A  \ 
\end{equation*}
but it contains a smaller number of non-zero coefficients, a contradiction.
\qed \end{proof}

\subsection{A decomposition property}\label{sec:decomp}

Under proper conditions,  the norm associated with a graph  may be deduced by decomposing the graph itself into a certain number of sub-graphs. 

Following \citet{weaver:2018-LA-2nd-ed}, if $\set{X_\lambda}$ is a family of pointed metric spaces, the sum $\coprod X_{\lambda} $ denotes their disjoint union with all base points identified and metric
\begin{equation*}
d(x,y) = d(x,e) + d(e,y) ,    
\end{equation*}
whenever $x$ and $y$ belong to distinct summands and $e$ is the common base point. This operation is also known as \textit{1-sum operation}, see \S~7.6 of \citet{deza|laurent:1997}.

A connected graph $G = (X,\mathcal E)$ is called \textit{decomposable} if the graph $G \setminus{x_0}$ is not connected for a certain vertex $x_0 \in X$.  If $G \setminus{x_0}$ has $k \geq 2$ components, then the set of vertices $X$ can be partitioned as $X=X_1 \cup X_2 \cup \cdots \cup X_k$, with $X_i \cap X_j = \set{x_0}$.
Consequently, if $x$ and $y$ lie into two distinct components $X_i$ and  $X_j$, then
\begin{equation*}
d(x,y) = d(x,x_0)+d(x_0,y).    
\end{equation*}
Hence, by means of vertex $x_0$, the graph $G = (X,\mathcal E)$ splits into $k$ sub-graphs $G_i = (X_i,\mathcal E_i)$, $i=1,\dots,k$, having in common the vertex $x_0$, and we can adopt the notation $G = \coprod_{i=1}^k G_i$, where 
$X = \coprod_{i=1}^k X_i$.

Observe further that any vector $\xi \in M_0(X)$ has a canonical decomposition $\xi = \xi^1 + \xi^2 + \cdots + \xi^k$, with $\xi^i \in M_0(X_i)$, and where
\begin{equation*}
\xi^i(x) = \xi (x)  \ \text{for}\ x \in X_i \setminus{x_0} \ \text{and} \ \xi^i(x_0) = -\sum_{x \in X_i \setminus{x_0}} \xi(x).     
\end{equation*}

After these preliminaries, we can state the following result, whose proof is referred to Prop.~3.9 in  \citet{weaver:2018-LA-2nd-ed}.

\begin{proposition}
Assume that a vertex $x_0 \in X$ splits the graph  $G = (X,\mathcal E)$ into $k$ components $G_i = (X_i,\mathcal E_i)$. Then
\begin{equation*}
 \AEspace {\coprod_{i=1}^k X_i} \cong \bigoplus_i  \AEspace {X_i}   ,   
\end{equation*}
with,
\begin{equation*}
\normat{G}{\xi} = \normat{G_1}{\xi^1} + \normat{G_2}{\xi^2} +\cdot + \normat{G_k}{\xi^k} \ .  
\end{equation*}
\end{proposition}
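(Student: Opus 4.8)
The plan is to establish the norm identity $\normat{G}{\xi}=\sum_{i=1}^k\normat{G_i}{\xi^i}$ by proving the two inequalities separately, after two preliminary remarks. First I would observe that each $G_i$ is an \emph{isometric} subgraph of $G$: since deleting $x_0$ disconnects the sets $X_i$, a simple path in $G$ joining two vertices of $X_i$ can never leave $X_i$, because to do so it would have to pass through the cut vertex $x_0$ twice; hence a geodesic between points of $X_i$ lies wholly in $G_i$ and $d_{G_i}=d_G$ on $X_i\times X_i$ (in particular $\AEspace{X_i}$ sits isometrically in $\AEspace{X}$, by Theorem~3.7 of \citet{weaver:2018-LA-2nd-ed}). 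Second, I would check that the canonical decomposition $\Phi\colon\xi\mapsto(\xi^1,\dots,\xi^k)$ is a linear bijection of $M_0(X)$ onto $\bigoplus_iM_0(X_i)$: the identity $\xi=\sum_i\xi^i$ uses $\sum_x\xi(x)=0$ only at the coordinate $x_0$, and bijectivity then follows by inspecting coordinates in $X_i\setminus\{x_0\}$ and at $x_0$.

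For the inequality $\normat{G}{\xi}\le\sum_i\normat{G_i}{\xi^i}$, I would take for each $i$ a representation $\xi^i=\sum_{x,y\in X_i}a_i(x,y)(\delta_x-\delta_y)$ nearly optimal for \cref{eq:AEnorm} in $\AEspace{X_i}$. Adding these over $i$ and using $\xi=\sum_i\xi^i$ yields a representation of $\xi$ as in \cref{eq:RAP} that uses only pairs inside a single $X_i$; since $d_G=d_{G_i}$ on such pairs its cost equals $\sum_i\sum_{x,y\in X_i}\aval{a_i(x,y)}d_G(x,y)$, which is within $\varepsilon$ of $\sum_i\normat{G_i}{\xi^i}$. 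Taking the infimum over representations gives the inequality.

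For the reverse inequality I would apply \Cref{prop:LARG} to the seminorm $\normof{\xi}'=\sum_{i=1}^k\normat{G_i}{\xi^i}$ on $\AEspace{X}$ (it is a seminorm, being a sum of seminorms precomposed with the linear maps $\xi\mapsto\xi^i$), for which it suffices to verify $\normof{\delta_x-\delta_y}'\le d_G(x,y)$ for all $x,y\in X$. If $x,y$ belong to the same $X_i$ then $(\delta_x-\delta_y)^i=\delta_x-\delta_y$ and $(\delta_x-\delta_y)^j=0$ for $j\ne i$, so $\normof{\delta_x-\delta_y}'=\normat{G_i}{\delta_x-\delta_y}=d_{G_i}(x,y)=d_G(x,y)$. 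If $x\in X_i\setminus\{x_0\}$ and $y\in X_j\setminus\{x_0\}$ with $i\ne j$, the definition of the canonical decomposition gives $(\delta_x-\delta_y)^i=\delta_x-\delta_{x_0}$, $(\delta_x-\delta_y)^j=\delta_{x_0}-\delta_y$ and all other components zero, so $\normof{\delta_x-\delta_y}'=d_{G_i}(x,x_0)+d_{G_j}(x_0,y)=d_G(x,x_0)+d_G(x_0,y)=d_G(x,y)$, the last step being exactly the $1$-sum relation defining $G=\coprod_iG_i$. By \Cref{prop:LARG}, $\normof{\cdot}'\le\normat{\ArensEells}{\cdot}=\normat{G}{\cdot}$. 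The two inequalities give the norm formula, and together with the bijection $\Phi$ this is the isometric isomorphism $\AEspace{\coprod_iX_i}\cong\bigoplus_i\AEspace{X_i}$.

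The argument is largely routine; the points needing genuine care are the bookkeeping at the shared base point $x_0$ in the canonical decomposition, and the observation that a geodesic between two vertices of a single component never leaves that component — this is what identifies $d_{G_i}$ with the restriction of $d_G$ and what makes the cross-component case collapse to the $1$-sum formula. An alternative route to the reverse inequality uses the dual description \cref{eq:DUAL}: take optimal potentials $u_i\in\Lipschitz_1^+(d_{G_i})$ normalised by $u_i(x_0)=0$ and glue them into $u\colon X\to\reals$ by $u|_{X_i}=u_i$; the $1$-sum metric forces $u\in\Lipschitz_1^+(d_G)$, and $\scalarof{\xi}{u}=\sum_i\scalarof{\xi^i}{u_i}=\sum_i\normat{G_i}{\xi^i}$, so $\normat{G}{\xi}\ge\sum_i\normat{G_i}{\xi^i}$.
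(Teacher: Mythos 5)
Your proof is correct. Note that the paper itself does not prove this proposition---it defers to Prop.~3.9 of \citet{weaver:2018-LA-2nd-ed}, which establishes the general fact that $\ArensEells$ of a 1-sum of pointed metric spaces is the $\ell^1$-direct sum of the individual $\ArensEells$ spaces---so your argument supplies a self-contained derivation adapted to the cut-vertex setting. The two preliminary observations you flag are exactly the points that need care and you handle them correctly: a simple path between two vertices of $X_i$ cannot visit another component without passing through $x_0$ twice, so $d_{G_i}=d_G$ on $X_i\times X_i$; and the canonical decomposition $\xi\mapsto(\xi^1,\dots,\xi^k)$ sends $\delta_x-\delta_y$ (for $x\in X_i$, $y\in X_j$, $i\neq j$) to the pair $\delta_x-\delta_{x_0}$, $\delta_{x_0}-\delta_y$, which is what makes the cross-component estimate collapse to the 1-sum identity $d_G(x,y)=d_G(x,x_0)+d_G(x_0,y)$. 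Your use of \Cref{prop:LARG} for the reverse inequality is a nice choice that stays entirely within the paper's own toolkit (it is the same maximality device the paper uses to compare the C-norms with the $\ArensEells$-norm), while your alternative dual route---gluing optimal potentials $u_i\in\Lipschitz_1^+(d_{G_i})$ at the common base point and checking the Lipschitz condition across components via $\aval{u_i(x)}+\aval{u_j(y)}\leq d(x,x_0)+d(x_0,y)$---is essentially how Weaver's cited proposition is proved. One cosmetic remark: in the finite setting the infimum in \cref{eq:AEnorm} is attained, so the $\varepsilon$-bookkeeping with nearly optimal representations can be dispensed with, but it does no harm.
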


\begin{example}\label{example:2T}
In the two-cycles graph
$\vcenter{\hbox{\begin{tikzpicture}
   [scale=.15,auto=left]
  \node (n1) at (-8,0) {1};
  \node (n2) at (0,0)  {2};
  \node (n3) at (8,0)  {3};
  \node (n4) at (4,4) {4};
  \node (n5) at (-4,4)  {5};
  \foreach \from/\to in {n1/n2,n2/n3,n3/n4,n4/n2,n2/n5,n5/n1}
    \draw (\from) -- (\to);
  \end{tikzpicture}}}$, the vertex $2$ breaks it into $2$ components. In view of Example 1, the norm of an element $\xi= (\xi_1,\xi_2,\xi_3,\xi_4,\xi_5)$ will be given by
\begin{equation*}
\normat {\ArensEells}  {\xi} = \frac{1}{2}(\aval{\xi_1} + \aval{\xi_5} + \aval{\xi_1 + \xi_5} + \aval{\xi_4} + \aval{\xi_3} + \aval{\xi_4 +\xi_3}). 
\end{equation*}
Note incidentally that by what has been discussed in \Cref{sub:CUTS} the graph distance generated by the two-cycles graph is a CUT metric having the realization
\begin{equation*}
  d = \frac{1}{2}(\delta_{\set{1}} + \delta_{\set{5}} + \delta_{\set{1,5}} + \delta_{\set{4}} + \delta_{\set{3}} + \delta_{\set{3,4}} ). 
\end{equation*}
\end{example}

\section{Trees}\label{sec:trees}

This section is devoted to the specific analysis of the K-distance for weighted trees. 
Let $T=(X,w)$ be a rooted tree, where  $x_0$ denotes the root. Each vertex $x \in X$ can  be classified according to its depth, that is, its (un-weighted) distance from that root. Here we are going to utilize the order $\preceq$ defined in \Cref{sec:graphs}.

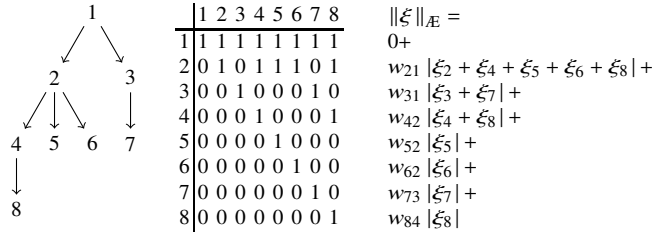
\begin{figure}\centering
  \begin{tabular}{lcr}
 \begin{tikzpicture}
  [scale=.25,auto=left]
  \node (n1) at (0,0) {1};
  \node (n2) at (-2,-3.5)  {2};
  \node (n3) at (2,-3.5)  {3};
  \node (n4) at (-4,-7) {4};
  \node (n5) at (-2,-7) {5};
  \node (n6) at (0,-7) {6};
  \node (n7) at (2,-7) {7};
  \node (n8) at (-4,-10.5) {8};
  \foreach \from/\to in {n1/n2,n1/n3,n2/n4,n2/n5,n2/n6,n3/n7,n4/n8}
    \draw[->] (\from) -- (\to);
\end{tikzpicture}
   &
\begin{array}[b]{c|cccccccc}
  & 1 & 2 & 3 & 4 & 5 & 6 & 7 & 8 \\
\hline
1 & 1 & 1 & 1 & 1 & 1 & 1 & 1 & 1 \\
2 & 0 & 1 & 0 & 1 & 1 & 1 & 0 & 1 \\
3 & 0 & 0 & 1 & 0 & 0 & 0 & 1 & 0 \\
4 & 0 & 0 & 0 & 1 & 0 & 0 & 0 & 1 \\
5 & 0 & 0 & 0 & 0 & 1 & 0 & 0 & 0 \\
6 & 0 & 0 & 0 & 0 & 0 & 1 & 0 & 0 \\
7 & 0 & 0 & 0 & 0 & 0 & 0 & 1 & 0 \\
8 & 0 & 0 & 0 & 0 & 0 & 0 & 0 & 1 \\
\end{array}
&
    \begin{array}[b]{rl}
      &\normat {\ArensEells}{\xi} = \\ &0 + \\
      &w_{21}\aval{\xi_2+\xi_4+\xi_5+\xi_6+\xi_8} + \\
      &w_{31}\aval{\xi_3+\xi_7} + \\
      &w_{42}\aval{\xi_4+\xi_8} + \\
      &w_{52}\aval{\xi_5} + \\
      &w_{62}\aval{\xi_6} + \\
      &w_{73}\aval{\xi_7} + \\
      &w_{84}\aval{\xi_8}
    \end{array}
  \end{tabular}
  \caption{\emph{Left panel:} A rooted tree in which each vertex is ordered by its distance from the root. \emph{Middle panel:} The adjacency matrix $E^*$ of the descendent relation. \emph{Right panel:} The Arens-Eells norm derived from the adjacency matrix $E^*$. \label{fig:rooted-tree}}
\end{figure}

Define the cumulative function defined by
\begin{equation*}
 \Xi(x) = \sum_{y \succeq x} \xi (y) \ ,   
\end{equation*}
 for $x \in X$ and $\xi \in M_0(X)$. Note that $\Xi(x_0) = \sum_{y \in X} \xi(y) = 0$. See also the right panel of \cref{fig:rooted-tree}.
Recall that $w(x,y) = d(x,y)$ holds for the edges $xy$ of a tree. Moreover, the shortest path is unique and it is the same under all weights and for all selections of a vertex as a root. 

\begin{theorem}
\label{th:tree}
Let $T = (X,w)$ be a weighted rooted tree. Then
\begin{equation}
\label{eq:DIS}
\normat {\ArensEells} {\xi}  = \sum_{x \in X \setminus{x_0}}  d(x,x^+) \aval {\Xi (x)} \ ,
\end{equation}
for every $\xi \in \AEspace X$. An equivalent expression of the norm is
\begin{equation}
\label{eq:MAT}
\normat {\ArensEells} {\xi}  = \sum_{y \in X}  \xi (y)  \sum_{x_0 \neq x \preceq y} d(x,x^+) \Varsig \Xi(x) \ , 
\end{equation}
where $\Varsig(\cdot) \in \set{-1,1}$ is a sign function, taking value at zero either  $\Varsig(0) = +1$ or $\Varsig(0) = -1$.
\end{theorem}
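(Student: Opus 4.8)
I would prove \eqref{eq:DIS} by two matching inequalities and then read off \eqref{eq:MAT} by interchanging the order of summation. The candidate extremal representation of $\xi$ uses only the tree edges:
\begin{equation*}
\xi = \sum_{x \in X \setminus \{x_0\}} \Xi(x)\,(\delta_x - \delta_{x^+}) \ .
\end{equation*}
To verify it, I would compare the coefficient of each $\delta_z$ on both sides. Splitting the defining sum $\Xi(z)=\sum_{y\succeq z}\xi(y)$ over $z$ and the subtrees rooted at its children gives the recursion $\Xi(z) = \xi(z) + \sum_{x\in\chof z}\Xi(x)$. For $z\neq x_0$ the coefficient of $\delta_z$ on the right is $\Xi(z)-\sum_{x\in\chof z}\Xi(x)=\xi(z)$; for $z=x_0$ it is $-\sum_{x\in\chof{x_0}}\Xi(x)$, which equals $\xi(x_0)$ because $\Xi(x_0)=\sum_{y\in X}\xi(y)=0$ forces $\sum_{x\in\chof{x_0}}\Xi(x)=-\xi(x_0)$. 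Since $d(x,x^+)=w(x,x^+)$ on tree edges, \Cref{def:AEspace} immediately yields the upper bound $\normat{\ArensEells}{\xi}\le \sum_{x\neq x_0} d(x,x^+)\aval{\Xi(x)}$.

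\textbf{Lower bound.} Here I would invoke the dual formula \eqref{eq:DUAL}, $\normat{\ArensEells}{\xi}=\sup\set{\scalarof{\xi}{u}\mid u\in\Lipschitz_1^+(d)}$. Since $\Lipschitz_1^+(d)$ is a polytope, the supremum is attained at an extreme point, and by \Cref{prop:exttree} these are exactly the functions $u_\epsilon(x)=\sum_{x_0\neq y\preceq x} d(y,y^+)\epsilon(y)$ with $\epsilon\colon X\setminus\{x_0\}\to\set{-1,1}$. Interchanging summations,
\begin{equation*}
\scalarof{\xi}{u_\epsilon}=\sum_{x\in X}\xi(x)\sum_{x_0\neq y\preceq x} d(y,y^+)\epsilon(y)=\sum_{y\in X\setminus\{x_0\}} d(y,y^+)\,\epsilon(y)\,\Xi(y) \ ,
\end{equation*}
and choosing $\epsilon(y)=\Varsig(\Xi(y))$ maximizes the right-hand side, giving $\normat{\ArensEells}{\xi}\ge\sum_{y\neq x_0} d(y,y^+)\aval{\Xi(y)}$. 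Combined with the upper bound this proves \eqref{eq:DIS}. (An alternative, arguably slicker route to \eqref{eq:DIS}: by \Cref{prop:Vague} the infimum in \eqref{eq:AEnorm} may be taken over representations supported on close pairs — which in a tree are precisely the edges — and cycle-free, which is automatic; every such representation has the form $\sum_{x\neq x_0} a(x)(\delta_x-\delta_{x^+})$, and since $a\mapsto\sum_x a(x)(\delta_x-\delta_{x^+})$ is a linear bijection $\reals^{X\setminus\{x_0\}}\to M_0(X)$, peeling off leaves, the coefficients are forced to be $a(x)=\Xi(x)$, so the infimum is over a single representation.)

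\textbf{The equivalent form and the main difficulty.} For \eqref{eq:MAT} I would note that $\aval{\Xi(y)}=\Varsig(\Xi(y))\Xi(y)$ for \emph{either} convention at $0$, substitute this into \eqref{eq:DIS}, expand $\Xi(y)=\sum_{x\succeq y}\xi(x)$, and swap the two sums to land exactly on $\sum_{y}\xi(y)\sum_{x_0\neq x\preceq y} d(x,x^+)\Varsig\Xi(x)$; its value is independent of the choice of $\Varsig(0)$ precisely because it equals the left-hand side of \eqref{eq:DIS}. None of these steps is deep: the only genuinely delicate point is the bookkeeping at the root — it is the identity $\Xi(x_0)=0$ that makes the explicit representation legitimate and the candidate optimum correct — together with the (standard) justification that the supremum over $\Lipschitz_1^+(d)$ reduces to the finite family $\set{u_\epsilon}$ supplied by \Cref{prop:exttree}.
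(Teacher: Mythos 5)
Your proof is correct and essentially reproduces the paper's argument: the paper gives two proofs of \Cref{th:tree}, and your lower bound is exactly its second proof (the extreme points $u_\epsilon$ from \Cref{prop:exttree}, an interchange of summations, and the choice $\epsilon=\Varsig\Xi$), while your parenthetical alternative is its first proof via \Cref{prop:Vague}. The one genuine, if minor, difference is that your main line gets the upper bound by directly exhibiting and verifying the representation $\xi=\sum_{x\neq x_0}\Xi(x)(\delta_x-\delta_{x^+})$, which makes the two-sided argument self-contained: it requires neither \Cref{prop:Vague} nor the full extremality characterization, but only the membership $u_\epsilon\in\Lipschitz_1^+(d)$, and the derivation of \cref{eq:MAT} then matches the paper's summation swap verbatim.
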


Formula \cref{eq:DIS} was given by \citet{mendivil:2017}. The interest of our presentation relies on the methods of proofs. Actually, we provide two different proofs for this theorem, each of them quite instructive in itself. The first one uses an algebraic argument based on the key result of \Cref{prop:Vague}. The alternative proof relies on the characterization of the extremal points of $\operatorname {Lip}_1^+(d)$.

\begin{proof}[First proof of \Cref{th:tree}] 
If the graph is a tree, \Cref{prop:Vague} implies that every $\xi \in \ArensEells (X)$ can be written as 

\begin{equation}
\label{eq:ZAZA}
\xi  = \sum_{x\in X \setminus{x_0}} a(x,x^+) (\delta_x - \delta_{x^+}) \ .
\end{equation}

In such a case, the above equation can be uniquely solved for the $a(x,x^+)$'s. Actually, from \cref{eq:ZAZA} we get
\begin{equation*}
\xi(z)  = a(z,z^+) - \sum_{x \in \chof z} a(x,z) 
\end{equation*}
for all $z \neq x_0$ and under the convention $\sum_{x \in \emptyset} a(x,z) = 0$.  Hence,
\begin{equation*}
 \Xi(x) = \sum_{z \succeq x} \xi (z) = \sum_{z \succeq x} a(z,z^+) - \sum_{z \succeq x} \sum_{x \in \chof z} a(x,z) = a(x,x^+) \ .
\end{equation*}
From this equality and the definition of the Arens-Eells norm,  \cref{eq:DIS} follows. 

With regard to \cref{eq:MAT}, it suffices to  interchange the order of two summations in \cref{eq:DIS}. More precisely,
\begin{multline*}
\normat {\ArensEells} {\xi}  = \sum_{x \in X \setminus{x_0}} d(x,x^+) \Varsig (\Xi(x)) \sum_{y \succeq x} \xi (y) = \\ \sum_{x \in X \setminus{x_0}} d(x,x^+) \Varsig (\Xi(x)) \sum_{y \in X} \xi (y) I_A (x,y) \ ,
\end{multline*}
where $I_A$ is the indicator function: $I_A(x,y) = 1$ if $y \succeq x$ and $I_A(x,y) = 0$, otherwise.

Therefore
 \begin{multline*}
\normat {\ArensEells} {\xi}  = \sum_{x \in X \setminus{x_0}} \sum_{y \in X} d(x,x^+) \ \Varsig (\Xi(x)) \xi (y) I_A (x,y) = \\ \sum_{y \in X} \xi (y)  \sum_{x \in X \setminus{x_0}} d(x,x^+)  \ \Varsig (\Xi(x)) I_A (x,y) \ ,
\end{multline*}
which is \cref{eq:MAT}.
\qed \end{proof}

\begin{proof}[Second proof of \Cref{th:tree}]

Thanks to the characterization of the extreme points of $\operatorname {Lip}_1^+(d)$ stated in \Cref{prop:exttree},  
we want to maximize the functional
\begin{equation*}
\sum_{y \in X \setminus{x_0}} \xi(y) u_\epsilon (y) =   \sum_{y \in X \setminus{x_0}} \xi(y)\sum_{y \succeq x} d(x,x^+) \epsilon(x),   
\end{equation*}
over all $\epsilon: X \setminus{x_0} \rightarrow \set{-1,1}$. 

On the other hand, by interchanging the order between the two summations,
\begin{equation*}
\sum_{y \in X \setminus{x_0}} \xi(y)\sum_{y \succeq x} d(x,x^+) \epsilon(x) = \sum_{x \in X \setminus{x_0}}d(x,x^+)\epsilon(x) \Xi(x)  \end{equation*}
and so the maximum value is attained when $\epsilon (x) = \Varsig\Xi(x)$ for all $x \in X \setminus{x_0}$. Hence the maximum value will be given by 
\cref{eq:DIS}.
\qed \end{proof}

\begin{remark}
By inspecting the second proof, we find easily the dual elements aligned to the points $\xi \in \ArensEells(X)$. Namely, for every $ \xi \in \AEspace X$ it holds $\scalarof{\xi}{\bar u} = \normat {\ArensEells} {\xi}$, where $\bar u \in \Extreme  \operatorname {Lip}_1^+(d)$ is given by
\begin{equation}
\label{eq:TUTU}
\bar u(y) = \sum_{x \preceq y} d(x,x^+) \Varsig \Xi(x),\quad  \forall y \in X.
\end{equation}
Multiple solutions to the alignment condition $\scalarof{\xi}{u} = \normat {\ArensEells} {\xi}$ will be due to the indeterminacy of the $\Varsig$ function for the vertices $x$ at which $\Xi(x)$ vanishes. 
\end{remark}

The cumulative-sum method to find the norm of \cref{eq:DIS} can be presented in a matrix form. This is illustrated in \Cref{fig:rooted-tree}. It is based on the iteration the adjacency matrix $E$ of the tree, i.e., the matrix whose entries are $a_{x,y}= 1$, if $x$ and $y$ are adjacent with $x \preceq y$,  and $a_{x,y} = 0$ otherwise.

Computing the finite sum
  \begin{equation*}
E^* = (I-E)^{-1} = \sum_{n=0}^\infty E^n,
\end{equation*}
the addenda of \cref{eq:DIS} appear as rows in the matrix $E^*$.

An explanation of this fact is that there is a linear relationship between the distribution $\xi$ and its cumulative distribution $\Xi$, which can be formulated through the adjacency matrix.
After having labeled the rooted tree and denoted by $\xi$ and $\Xi$ the two resultant column vectors, then  the following two equivalent equations hold
\begin{equation*}
\xi = (I-E)\ \Xi \quad \Longleftrightarrow \quad \Xi = (I-E)^{-1}\xi = E^* \xi.    
\end{equation*}
Indeed, the relation on the left-hand side is the matrix form of the obvious identity
\begin{equation}
\label{eq:CHIL}
\xi (x) = \Xi (x) -  \sum_{y \in \chof x} \Xi(y) \ ,     
\end{equation}
for all $x \in X$.
\medskip

Another noteworthy fact is that the $\CUT$ seminorm coincides with Arens-Eells norm, whenever the graph is a tree.

Every edge $e = \set{x,y}$ of tree $(X,\mathcal{E},w)$ splits the vertex set $X$ into two disjoint connected components $S_e$ and $\bar S_e = X \setminus{S_e}$. In turn, the graph metric $d_{T,w}$ can be decomposed in a unique way as
\begin{equation}
\label{eq:deco}
d_{T,w} = \sum_{e \in \mathcal{E}} w_e \delta_{S_e},
\end{equation}
where $w_e = w(x,y)$. See Prop.~11.1.4 of \citet{deza|laurent:1997}.

Observe further that, although $\delta_{S_e} = \delta_{\bar S_e}$, when dealing with rooted trees, it is convenient to  take $S_e$ to be the member of the partition that does not include the root $x_0$.

\begin{theorem}\label{th:AE-vs-CUT}
For every weighted tree with $d_{T,w} = \sum_{e \in \mathcal{E}} w_e \delta_{S_e}$, the \AE-norm is equal to the cut norm of the sequence $C = \set{w_e}$,
\begin{equation}
\label{eq:NCUT}
\normat {\ArensEells} {\xi}  =\normat {C} {\xi}  = \sum_{e \in \mathcal{E}} w_e \aval{\sum_{x \in S_e} \xi(x)} \ .
\end{equation}
\end{theorem}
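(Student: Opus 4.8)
The plan is to identify the right-hand side of \cref{eq:NCUT} with the right-hand side of \cref{eq:DIS}, which was established in \Cref{th:tree}. The key observation is purely combinatorial: for a rooted tree, there is a natural bijection between the edge set $\mathcal{E}$ and the set of non-root vertices $X \setminus \set{x_0}$, sending an edge $e$ to the unique endpoint $x$ with $x^+$ the other endpoint (i.e., the endpoint farther from the root). Under this bijection, $w_e = w(x,x^+) = d(x,x^+)$, since in a tree the weight of an edge equals the distance between its endpoints. So I first note that the sum $\sum_{e \in \mathcal{E}}$ in \cref{eq:NCUT} can be rewritten as $\sum_{x \in X \setminus \set{x_0}}$ with the edge $e$ corresponding to the pair $\set{x,x^+}$.

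Next, I would show that $S_e$, the component of $X \setminus e$ not containing the root, is exactly the set of descendants of $x$, namely $S_e = \setof{y \in X}{y \succeq x}$. This is because removing the edge $\set{x,x^+}$ disconnects the tree into the subtree rooted at $x$ (all $y \succeq x$) and its complement (which contains $x^+$ and hence $x_0$). Consequently, $\sum_{z \in S_e} \xi(z) = \sum_{z \succeq x} \xi(z) = \Xi(x)$, by the definition of the cumulative function $\Xi$. Substituting these two identifications into the right-hand side of \cref{eq:NCUT} yields exactly $\sum_{x \in X \setminus \set{x_0}} d(x,x^+) \aval{\Xi(x)}$, which is the expression for $\normat{\ArensEells}{\xi}$ from \cref{eq:DIS}. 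This proves the chain of equalities $\normat{\ArensEells}{\xi} = \sum_{e \in \mathcal{E}} w_e \aval{\sum_{x \in S_e}\xi(x)}$, and the middle term $\normat{C}{\xi}$ is just \Cref{def:cut-norm} applied to the family $C = \set{w_e}$ together with the decomposition \cref{eq:deco}, so the identification with $\normat{C}{\xi}$ is automatic.

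I do not anticipate a serious obstacle here; the statement is essentially a repackaging of \Cref{th:tree} in the language of cut seminorms, and the only thing to be careful about is the bookkeeping in the edge-to-vertex correspondence and the convention (already flagged in the text) that $S_e$ is taken to be the part of the partition avoiding $x_0$, which is what makes the cut decomposition $x_0$-adapted and matches the one-sided cumulative sums. If one wished to avoid citing \Cref{th:tree}, one could alternatively argue directly: the inequality $\normat{C}{\xi} \le \normat{\ArensEells}{\xi}$ is already known from the proposition in \Cref{sub:CUTS}, so only the reverse inequality would need proof, e.g.\ by exhibiting, for each $\xi$, a representation of $\xi$ as in \cref{eq:RAP} supported on tree edges whose cost $\sum \aval{a(x,x^+)} d(x,x^+)$ equals $\sum_e w_e \aval{\sum_{x \in S_e}\xi(x)}$ — but since \Cref{th:tree} is available, the shortest route is the substitution described above.
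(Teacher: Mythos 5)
Your proposal is correct and follows essentially the same route as the paper: the paper's proof likewise identifies each edge $e$ with the pair $(x,x^+)$ for $x \in X\setminus\set{x_0}$, observes that $S_e = \setof{y \in X}{y \succeq x}$ under the convention $x_0 \notin S_e$, and concludes that the right-hand side of \cref{eq:NCUT} coincides with \cref{eq:DIS}. Your additional bookkeeping (that $w_e = d(x,x^+)$ and that $\sum_{z\in S_e}\xi(z)=\Xi(x)$) is exactly the substance of that identification.
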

\begin{proof}
According to the above observation, suppose $x_0 \notin S_e$ for every edge $e$. Clearly edges $e$ are in  one-to-one correspondence with the pair $(x,x^+)$ for $x \in X \setminus{x_0}$.
Likewise,
\begin{equation*}
 \set{y \in X : y \succeq x} = S_{(x,x^+)} \ . \end{equation*}
Therefore, \cref{eq:DIS} equals the expression in \cref{eq:NCUT}. 
\qed \end{proof}

\medskip

The construction of the extreme points for trees made in \Cref{prop:exttree} suggests the following extension.

Associate with every function $\phi$ defined on $X \setminus{x_0}$, the following Kantorovich potential
\begin{equation*}
u_\phi(y) = \sum_{x \preceq y} d(x,x^+)\phi(x)
\end{equation*}
defined on the vertices of the tree.
\begin{proposition}
\label{prop:DUALI}
The mapping $\phi  \mapsto u_{\phi}$, sending $l_\infty (X \setminus{x_0})$ onto $\operatorname {Lip}^+(d)$, is an isometric isomorphism. Its inverse, $\Delta : \operatorname {Lip}^+(d) \mapsto l_\infty (X \setminus{x_0}) $ is given by 
\begin{equation*}
 (\Delta u)(x)= \frac{u(x)-u(x^+)}{d(x,x^+)}   
\end{equation*}
with $(\Delta u)(x_0)=0$.
\end{proposition}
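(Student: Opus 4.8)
The plan is to verify directly that the two maps $\phi \mapsto u_\phi$ and $u \mapsto \Delta u$ are mutually inverse, and then to check that the first is an isometry with respect to the sup-norm on $l_\infty(X\setminus\{x_0\})$ and the Lipschitz seminorm on $\operatorname{Lip}^+(d)$. First I would observe that, for any $\phi$, the function $u_\phi$ satisfies $u_\phi(x_0)=0$ (the empty sum) and, for $x\neq x_0$, the recursion $u_\phi(x)=u_\phi(x^+)+d(x,x^+)\phi(x)$, since the chain $\{y : y\preceq x\}$ is obtained from $\{y : y\preceq x^+\}$ by adjoining $x$. Hence $(\Delta u_\phi)(x)=\phi(x)$ for all $x\neq x_0$, so $\Delta\circ(\phi\mapsto u_\phi)$ is the identity. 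Conversely, given $u\in\operatorname{Lip}^+(d)$, define $\phi=\Delta u$; telescoping the recursion along the unique path from $x_0$ to $y$ gives $u_{\Delta u}(y)=\sum_{x\preceq y}d(x,x^+)\,(\Delta u)(x)=\sum_{x\preceq y}(u(x)-u(x^+))=u(y)-u(x_0)=u(y)$, because the tree path from $x_0$ to $y$ telescopes. So the two maps are inverse bijections between the vector spaces, and linearity of each is immediate from the formulas.

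Next I would establish the isometry. For the easy direction, if $\|\phi\|_\infty\le K$ then for any edge $xy$ of the tree we have $|u_\phi(x)-u_\phi(y)|=d(x,x^+)|\phi(x)|$ (taking $y=x^+$, say) $\le K\,d(x,y)$; by \Cref{prop:lipshitz} (adjacency suffices in a tree) this gives $\|u_\phi\|_{\operatorname{Lip}(d)}\le K$, i.e. $\|u_\phi\|_{\operatorname{Lip}(d)}\le\|\phi\|_\infty$. For the reverse, if $u\in\operatorname{Lip}^+(d)$ with $\|u\|_{\operatorname{Lip}(d)}\le K$, then for each edge $xx^+$ we get $|(\Delta u)(x)|=|u(x)-u(x^+)|/d(x,x^+)\le K$, so $\|\Delta u\|_\infty\le\|u\|_{\operatorname{Lip}(d)}$. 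Combining the two inequalities with the fact that the maps are inverse gives $\|u_\phi\|_{\operatorname{Lip}(d)}=\|\phi\|_\infty$, which is the claimed isometry; in particular the map is onto $\operatorname{Lip}^+(d)$ because every such $u$ equals $u_{\Delta u}$.

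The only subtlety — and the step I would be most careful about — is the use of \Cref{prop:lipshitz} in both directions: one must note that in a tree every pair of adjacent vertices is close and that $d(x,x^+)=w(x,x^+)$, so checking the Lipschitz bound on edges is both necessary and sufficient, and that $d(x,y)$ for an edge is exactly the single weight with no shorter route competing. This is precisely the content recalled just before \Cref{prop:exttree}, so no real obstacle remains; the argument is essentially a telescoping identity along the unique tree paths plus the edge-reduction of the Lipschitz condition.
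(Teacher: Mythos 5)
Your proof is correct and follows essentially the same route as the paper: the same edge recursion $u_\phi(x)=u_\phi(x^+)+d(x,x^+)\phi(x)$, the same telescoping along tree paths to show the maps are mutually inverse, and the same reduction of the Lipschitz condition to edges via \Cref{prop:lipshitz}. The only cosmetic difference is that the paper obtains $\normof{u_\phi}_{\Lipschitz(d)}=\normat{\infty}{\phi}$ directly by exhibiting an edge where $\aval{\phi}$ attains its maximum, whereas you deduce the equality from the two one-sided inequalities combined with bijectivity.
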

\begin{proof}
Clearly the map is linear. Let us check that it is an isometry. Consider adjacent vertices $y_1,y_2$, with $y_1 \succeq y_2$. Then,
\begin{equation}
\label{eq:ISO}
u_\phi(y_1) =d(y_1,y_2)\phi(y_1) + u_\phi(y_2).
\end{equation}
Hence, $u_\phi(y_1) - u_\phi(y_2) =d(y_1,y_2)\phi(y_1)$,  and so $\normof{u_\phi}_{Lip} \leq \normof{\phi}_\infty$.

On the other hand, if $y_1$ is an element in $X \setminus{x_0}$ for which $\phi(y_1) = \pm \normof{\phi}_\infty$, then the relation \cref{eq:ISO} implies the equality $\normof{u_\phi}_{\Lipschitz} = \normof{\phi}_\infty$. We have so proved that the mapping is an injective isometry.

Denoting by $\Psi$ the direct map $\phi  \mapsto u_{\phi}$, we have
\begin{equation*}
( \Psi \circ \Delta)u (y) = \sum_{y \succeq x \neq x_0} d(x,x^+) \frac{u(x)-u(x^+)}{d(x,x^+)} = u(y).  
\end{equation*}
Consequently, $\Psi$ is onto with inverse given by $\Delta$.
\qed \end{proof}

Let us outline a few consequences that can be derived from the construction of the previous map.

\begin{enumerate}[label = \roman*)]

\item \Cref{prop:DUALI} provides a simple proof that $\operatorname {Lip}^+(d)$ is a dual space. Actually, $\operatorname {Lip}^+(d) \simeq l_1 (X \setminus{x_0})^*$.

\item For every tree with $n$ vertices, $2^{n-1}$ is the number of the extreme points of the unit ball of $\operatorname {Lip}^+(d)$. Actually, it is the image of the unit cube $\normof{x} \leq 1$ of $l_\infty (X \setminus{x_0})$.

\item Interestingly, the inverse map $\Delta$ of $\phi  \rightarrow u_{\phi}$ is closely related to De Leeuw's map (see \citet{weaver:2018-LA-2nd-ed}) which associates with every Lipschitz function $f : X \rightarrow \reals $, the function
\begin{equation*}
 (x,y) \mapsto \frac{f(x)-f(y)}{d(x,y)}   
\end{equation*}
defined for $x \neq y \in X$.   

\end{enumerate}

Another fact of interest is the differentiability of the \AE-norm, which is a direct consequence of the representation (\ref{eq:MAT}).

\begin{proposition}
The norm-function $ f(\xi) = \normat {\ArensEells} {\xi}$ is differentiable at every $\xi \in \AEspace X$ such that $\Xi(x) \neq 0$ for all $x \neq x_0$. Its gradient $\nabla \normat {\ArensEells} {\xi} \in  \Extreme \operatorname{Lip}_1^+(d)$ is given by
\begin{equation*}
\nabla \normat {\ArensEells} {\xi} = \bar u_\xi     
\end{equation*}
where $\bar u_\xi$ is aligned with $\xi$, i.e., $\scalarof{\xi}{\bar u_\xi} = \normat {\ArensEells} {\xi}$. More generally, the directional derivative of $f$ at $\xi$, into the direction $\eta \in \AEspace X$, is 
\begin{multline*}
f'(\xi;\eta)= \\ \sum_{y \in X} \eta(y) \sum_{y \preceq x, x \in V} d(x,x^+) \Varsig \Xi(x) + \sum_{y \in X} \eta(y) \sum_{y \preceq x, x \in V^0} d(x,x^+) \Varsig (\sum_{x \preceq y}\eta(y)).
\end{multline*}
where $V^0 \subseteq X$ is the set of vertices $x$ for which $\Xi(x)$ vanishes and $V = X \setminus V^0$.
\end{proposition}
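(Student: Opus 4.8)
The plan is to combine the explicit formula \eqref{eq:MAT} from \Cref{th:tree} with the standard theory of convex piecewise-linear functions. Indeed, \eqref{eq:MAT} exhibits $f(\xi) = \normat{\ArensEells}{\xi}$ as a finite sum $\sum_{x \in X\setminus\{x_0\}} d(x,x^+)\,\aval{\Xi(x)}$, where each $\Xi(x) = \sum_{y \succeq x}\xi(y)$ is a fixed linear functional of $\xi$. Thus $f$ is a nonnegative combination of the seminorms $\xi \mapsto \aval{\Xi(x)}$, each of which is the absolute value of a linear form, hence convex and positively homogeneous; the only source of non-smoothness is the kink of $t \mapsto \aval t$ at $t = 0$.

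First I would establish the differentiability claim. At a point $\xi$ with $\Xi(x) \neq 0$ for every $x \neq x_0$, each summand $d(x,x^+)\aval{\Xi(x)}$ is differentiable, with gradient $d(x,x^+)\,\Varsig(\Xi(x))\,\nabla\Xi(x)$; since $\Xi(x) = \sum_{y \in X}\xi(y)I_A(x,y)$ in the notation of the first proof of \Cref{th:tree}, the $y$-component of $\nabla\Xi(x)$ is $I_A(x,y)$, i.e.\ $1$ if $y \succeq x$ and $0$ otherwise. Summing over $x$ and interchanging the order of summation exactly as in the passage from \eqref{eq:DIS} to \eqref{eq:MAT}, the $y$-component of $\nabla f(\xi)$ becomes $\sum_{x_0 \neq x \preceq y} d(x,x^+)\Varsig(\Xi(x))$, which is precisely $\bar u(y)$ from \eqref{eq:TUTU}. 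The alignment identity $\scalarof{\xi}{\bar u} = \normat{\ArensEells}{\xi}$ was already recorded in the remark following the second proof of \Cref{th:tree}, and $\bar u \in \Extreme\operatorname{Lip}_1^+(d)$ by \Cref{prop:exttree} (all the $\Varsig$ values are genuine $\pm 1$ here). So Euler's identity for the positively homogeneous function $f$ gives $\scalarof{\xi}{\nabla f(\xi)} = f(\xi)$, consistent with the alignment.

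For the directional-derivative formula, split $X\setminus\{x_0\}$ into $V^0 = \{x : \Xi(x) = 0\}$ and $V = X\setminus V^0$ (so $V$ tacitly contains $x_0$, or one restricts to $x \neq x_0$ throughout). Write $f(\xi) = g(\xi) + h(\xi)$ with $g(\xi) = \sum_{x \in V} d(x,x^+)\aval{\Xi(x)}$ and $h(\xi) = \sum_{x \in V^0} d(x,x^+)\aval{\Xi(x)}$. Near $\xi$ the function $g$ is smooth, contributing the first sum in the stated formula by the computation above; for $h$ one uses the elementary fact that for $a = 0$ the map $t \mapsto \aval{a + t\,b}$ has directional derivative $\aval b$ at $t=0$, here with $b = \sum_{z \succeq x}\eta(z) = \Xi_\eta(x)$, giving $h'(\xi;\eta) = \sum_{x \in V^0} d(x,x^+)\aval{\Xi_\eta(x)}$. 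Expanding $\aval{\Xi_\eta(x)} = \Varsig(\Xi_\eta(x))\sum_{z \succeq x}\eta(z)$ and interchanging summations yields the second sum. Adding $g'(\xi;\eta) = \scalarof{\eta}{\nabla g(\xi)}$ and $h'(\xi;\eta)$ gives the displayed expression; the differentiability claim then follows as the special case $V^0 = \emptyset$, where the second sum is empty and $f'(\xi;\cdot)$ is linear.

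I do not expect a serious obstacle here; the only points requiring mild care are bookkeeping ones. One is making the index conventions in the statement consistent — notably that $V$ should be understood to exclude $x_0$ (or that $d(x_0,x_0^+)$ is vacuous), and that the inner sums $\sum_{y \preceq x}$ versus $\sum_{x \preceq y}$ are typeset as intended. The other is justifying that one may differentiate the finite sum term by term and that the directional derivative of a finite sum of convex functions is the sum of the directional derivatives, which is immediate since all functions involved are finite-valued and convex on the finite-dimensional space $\AEspace X$. A final remark worth including is that at points of $V^0$ the alignment vector is non-unique precisely because $\Varsig$ is free there, recovering the subdifferential $\partial f(\xi)$ as the set of $\bar u_\epsilon$ obtained by letting $\epsilon(x) \in [-1,1]$ (rather than $\pm 1$) for $x \in V^0$.
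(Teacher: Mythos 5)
Your proposal is correct and follows essentially the same route as the paper: both rest on the closed form \cref{eq:DIS}--\cref{eq:MAT}, the local constancy of $\Varsig\Xi(x)$ when no $\Xi(x)$ vanishes, and the identification of the gradient with $\bar u$ from \cref{eq:TUTU}. The only difference is that you actually carry out the directional-derivative computation (the $V$/$V^0$ split and the one-variable fact about $t\mapsto\aval{a+tb}$) that the paper dismisses as ``tedious algebra,'' and you correctly flag the $x\preceq y$ versus $y\preceq x$ typesetting issue in the statement.
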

\begin{proof}
If $\Xi(x) \neq 0$ for all $x \neq x_0$, then the functions $\Varsig \Xi (x)$ are constant in a neighborhood of $\xi$. By \cref{eq:MAT} it follows that the function $\xi \rightarrow \normat {\ArensEells} {\xi}$ is locally linear.  \cref{eq:TUTU} provides the desired gradient.

Tedious algebra leads to the directional derivatives too.
\qed \end{proof}

Though searching optimal solutions is a well-settled  LP problem---see \S~3.3 of \citet{peyre|cuturi:2019}---it is hard to provide a closed-form of the set of solutions as a function of the measure $\xi = \mu - \nu$.

The next result gives a partial positive answer, which is valid under the restriction given in \cref{eq:BABA} below.

\begin{theorem}\label{th:optimalplan}
Consider a rooted tree in $X$ and let $\xi = \mu - \nu$. An optimal coupling $\gamma^* \in \mathcal{P}(\mu,\nu)$ is given by 
\begin{gather*}
\gamma^*(x,x^+) = [\Xi(x)]^+ \ , \quad \gamma^*(x^+,x) = [\Xi(x)]^- \ , \\ \gamma^*(x,x) = \mu(x) -[\Xi(x)]^+ - \sum_{u \in \chof x} [\Xi(u)]^- \ ,
\end{gather*}
and $\gamma^*(x,y) = 0$ otherwise, provided 
\begin{equation}
\label{eq:BABA}
\mu(x) \geq [\Xi(x)]^+ + \sum_{u \in \chof x} [\Xi(u)]^-     
\end{equation}
is true for every $x \in X$. A sufficient condition for \cref{eq:BABA} to hold is that $\mu \gg 0$ and $\normof{\mu - \nu}_{l_1}$ is sufficiently small. 
\end{theorem}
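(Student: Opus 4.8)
The plan is to show that the array $\gamma^*$ displayed in the statement is a feasible coupling in $\mathcal P(\mu,\nu)$ whenever \cref{eq:BABA} holds, and then that its transport cost equals $\normat{\ArensEells}{\mu-\nu}$. Since, by \Cref{Kantorovich} and \cref{eq:DUAL}, that quantity is precisely $d(\mu,\nu)$, feasibility together with the matching cost forces $\gamma^*$ to be optimal, with no separate optimization argument required.

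First I would verify feasibility. Non-negativity of the off-diagonal entries is automatic, since $[\Xi(x)]^{\pm}\ge0$; non-negativity of the diagonal entries $\gamma^*(x,x)$ is exactly the hypothesis \cref{eq:BABA}. For the first marginal, fix $x$ and note that the only entries in row $x$ that can be non-zero are $\gamma^*(x,x)$, the entry $\gamma^*(x,x^+)=[\Xi(x)]^+$ (absent when $x=x_0$), and the entries $\gamma^*(x,u)=\gamma^*(u^+,u)=[\Xi(u)]^-$ for $u\in\chof x$; adding these and inserting the definition of $\gamma^*(x,x)$ collapses the sum to $\mu(x)$ (for $x=x_0$ one uses $\Xi(x_0)=0$, hence $[\Xi(x_0)]^+=0$). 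For the second marginal, fix $y$; the non-zero entries in column $y$ are $\gamma^*(y,y)$, $\gamma^*(y^+,y)=[\Xi(y)]^-$ (absent when $y=x_0$), and $\gamma^*(u,y)=[\Xi(u)]^+$ for $u\in\chof y$. Using $a^+-a^-=a$, their sum rearranges to $\mu(y)-\Xi(y)+\sum_{u\in\chof y}\Xi(u)$, which by the telescoping identity \cref{eq:CHIL} equals $\mu(y)-\xi(y)=\nu(y)$. This marginal check is the one step where I expect the bookkeeping to be delicate, because the root has no parent and \cref{eq:CHIL} must be invoked at exactly the right place.

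Next I would compute the cost. As $d(x,x)=0$, only the pairs $(x,x^+)$ and $(x^+,x)$ contribute, so
\[
\sum_{x,y}d(x,y)\,\gamma^*(x,y)=\sum_{x\in X\setminus\{x_0\}}d(x,x^+)\bigl([\Xi(x)]^++[\Xi(x)]^-\bigr)=\sum_{x\in X\setminus\{x_0\}}d(x,x^+)\,\aval{\Xi(x)},
\]
which is precisely the closed form \cref{eq:DIS} for $\normat{\ArensEells}{\mu-\nu}$, and this equals $d(\mu,\nu)$. Hence $\gamma^*$ attains the infimum in \cref{eq:K-distance} and is an optimal coupling. For the final assertion, if $\mu\gg0$ put $m=\min_{x}\mu(x)>0$; since $\aval{\Xi(x)}\le\normof{\mu-\nu}_{l_1}$ for every $x$, the right-hand side of \cref{eq:BABA} is at most $(1+\#\chof x)\,\normof{\mu-\nu}_{l_1}$, which is $\le m$ once $\normof{\mu-\nu}_{l_1}$ is small enough; thus \cref{eq:BABA} holds.

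In summary, there is no genuine analytic obstacle: granted the closed form \cref{eq:DIS}, the theorem reduces to an algebraic identity, and the only real care needed is in handling the root $x_0$ and in the correct use of \cref{eq:CHIL} in the second-marginal computation.
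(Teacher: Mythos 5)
Your proposal is correct and follows essentially the same route as the paper: check non-negativity (which is exactly \cref{eq:BABA} on the diagonal), verify both marginals using \cref{eq:CHIL} for the second one, and observe that the cost collapses to $\sum_{x\neq x_0} d(x,x^+)\aval{\Xi(x)}=\normat{\ArensEells}{\xi}$, whence optimality by duality. Your explicit bound for the final sufficiency claim is cruder than the paper's ($2\normof{\mu-\nu}_{l_1}$ suffices, using that the subtrees below distinct children are disjoint), but it is adequate for the qualitative statement.
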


\begin{remark}
  \begin{enumerate}[label = \roman*)]
\item Another class of optimal solutions can be established by changing the role of the two probability functions. Therefore we also have the solution:
\begin{gather*}
\gamma^*(x,x^+) = [\Xi(x)]^- \ , \quad \gamma^*(x^+,x) = [\Xi(x)]^+ \ , \\ \gamma^*(x,x) = \nu(x) -[\Xi(x)]^- - \sum_{u \in \chof x} [\Xi(u)]^+ \\
\text{if} \quad \label{eq:REST}
\nu(x) \geq [\Xi(x)]^- + \sum_{u \in \chof x} [\Xi(u)]^+ \ . 
\end{gather*} 
\item Summing up in \cref{eq:BABA} with respect to the $x$ variable, we get the necessary condition 
\begin{equation*}
    \sum_{x \in X} \aval{\mu(x) - \nu(x)} = \normof{\mu - \nu}_{l_1} \leq 1
\end{equation*}
that obliges the two probability functions to be sufficiently close to each other.
\item It is not hard to check that a sufficient condition to hold \cref{eq:BABA} is that
\begin{equation*}
   \min_x \mu(x) \geq 2\normof{\mu - \nu}_{l_1}.  
 \end{equation*}
\end{enumerate}
\end{remark}

\begin{proof}
  By construction, $\gamma^*(x,y) \geq 0$. Let us show that the plan $\gamma^*$ is feasible. Actually,
  \begin{multline*}
\sum_{y \in X} \gamma^*(x,y) = \gamma^*(x,x) + \gamma^*(x,x^+) + \sum_{u \in \chof x} \gamma^*(x,u) = \\
\gamma^*(x,x) + [\Xi(x)]^+ + \sum_{u \in \chof x} [\Xi(u)]^- = \mu(x), 
\end{multline*}
while
\begin{multline*}
\sum_{x \in X} \gamma^*(x,y) = \gamma^*(y,y) + \gamma^*(y^+,y) + \sum_{u \in \chof y} \gamma^*(u,y) = \\= \mu(y) -[\Xi(y)]^+ - \sum_{u \in \chof y} [\Xi(u)]^- + [\Xi(y)]^- + \sum_{u \in \chof y} [\Xi(u)]^+\\
= \mu(y) - \Xi(y) + \sum_{u \in \chof y} \Xi(u) = \mu(y) - \xi(y) = \nu(y) \ .
\end{multline*}
So we have checked that $\gamma^*$ is a feasible plan.

Regarding its optimality, we have
\begin{align*}
 \sum_{x,y \in X} d(x,y) \gamma^* (x,y) &=  \sum_{x \in X} \sum_{y \in X}d(x,y) \gamma^* (x,y) \\ &=  \sum_{x \in X} [d(x,x^+)\gamma^* (x,x^+) + \sum_{u \in \chof x}  d(x,u) \gamma^*(x,u)]  \\ &= \sum_{x \in X} [d(x,x^+)[\Xi(x)]^+ + \sum_{u \in \chof x}  d(x,u) [\Xi (u)]^-] \\
 &= \sum_{x \in X} d(x,x^+)[\Xi(x)]^+ + \sum_{x \in X} \sum_{u \in \chof x}  d(x,u) [\Xi (u)]^-.
\end{align*}
Under the usual interchanging of summation order, the last addendum becomes
\begin{multline*}
\sum_{x \in X} \sum_{u \in \chof x}  d(x,u) [\Xi (u)]^- = \sum_{x \in X} \sum_{u \in X}  d(x,u) [\Xi (u)]^-I(x,u) = \\
\sum_{u \in X} [\Xi (u)]^- \sum_{x \in X}  d(x,u) I(x,u) = 
\sum_{u \in X} [\Xi (u)]^- d(u^+,u) \ .
\end{multline*}

At last, we get
\begin{equation*}
 \sum_{x,y \in X} d(x,y) \gamma^* (x,y)= \sum_{x \in X} d(x,x^+) \aval{\Xi(x)} =  \normat {\ArensEells} {\xi}   
\end{equation*}
which is the desired result.

\qed \end{proof}

\begin{example}[Barycenter]
\label{ex:barycentre}

If $\mu$ is a probability function defined on the vertices $X$ of a weighted tree, a barycentre is a vertex $\hat x \in X$ such that the K-distance between $\mu$ and the delta function of that vertex is minimal, namely
\begin{equation*}
\normat {\ArensEells} {\delta_{\hat x} - \mu} = \min_{x \in X}   \normat {\ArensEells} {\delta_x - \mu} \ .   
\end{equation*}
Barycenters of probability measures on metric spaces are used in various statistical applications. See \citet{evans|matsen:2012} for the specific example of weighted trees.  

If $\bar x$  denotes the root of the tree, \cref{eq:DIS} yields
\begin{equation*}
\label{eq:bary}
 \normat {\ArensEells} {\delta_{\bar x} - \mu} = \sum_{x \in X \setminus {\bar x}} d(x,x^+)  \sum_{x \preceq y}  \mu(y),        
\end{equation*}
that can be simplified by interchanging the two summations. More specifically, we have
\begin{equation*}
 \normat {\ArensEells} {\delta_{\bar x} - \mu} = \sum_{x \in X \setminus{\bar x}} \sum_{y \in X}  d(x,x^+) \mu (y) I_A(x,y),           
\end{equation*}
where $I_A$ is the indicator function with $I_A(x,y) = 1$ if $x \preceq y$ and $I_A(x,y) = 0$ otherwise.

Therefore,
\begin{multline*}
\normat {\ArensEells} {\delta_{\bar x} - \mu} = \sum_{y \in X} \mu (y) \sum_{x \in X \setminus{\bar x}}  d(x,x^+)  I_A(x,y) = \\ \sum_{y \in X} \mu (y) \sum_{x \preceq y} d(x,x^+)
=\sum_{y \in X} \mu (y) d(y,\bar x) = \expectat \mu {d(\cdot,\bar x)} \ .
\end{multline*}

Consequently, the barycenter $x_B$ will be that vertex that minimizes the $\mu$-mean distance of vertices from itself, i.e.,
\begin{equation*}
\label{eq:bary+}
 x_B = \arg \min_{\bar x \in X} \expectat \mu {d(\cdot ,\bar x)}.    
\end{equation*}
\end{example}

It is worth noticing that an analogous result remains valid if the barycenter is made with respect to C-norms generated by CUT metrics $ d = \sum_S  \lambda_S \delta_S$. Actually,
\begin{equation*}
 \normat{C}{\mu - \delta_ {\bar x }} = \sum_{S \in F_{\bar x}}   \lambda_S (1- \mu(S)) + \sum_{S \notin F_{\bar x}} \lambda_S \mu(S) = \expectat \mu  {d(\cdot ,\bar x)} 
\end{equation*}
where $F_{\bar x}$ is the principal filter generated by the point $\bar x$.

For instance, the barycenter is true for all tree-like metric spaces. For example, we have
\begin{equation*}
\label{eq:bary++}
 x_B = \arg \min_{x \in X} [\phi(x) (1-2\mu(x))]
\end{equation*}
for the metric space  $(X,\phi)$ of \Cref{remark:SSS}.

\subsection{Tree-like spaces}
\label{subsect: tree-like}
Let us first briefly clarify the decomposition formula established in 
\Cref{th:AE-vs-CUT} about trees. We want to clarify which is the underlying structure for the family C associated to a tree.

Given a finite pointed set $(X,x_0)$, where $x_0$ is a distinguished point of $X$, let $\mathcal{S}(X)$ be a family of nonempty subsets of $X$ enjoying the following properties:

\begin{enumerate}[label = \roman*)]
\item  $x_0 \notin S$ for all $S \in \mathcal{S}(X)$ ;
\item  if $S_i, S_j \in \mathcal{S}(X)$ and $S_i \cap S_j \neq \emptyset$, then either $S_i \subseteq S_j$ or $S_j \subseteq S_i$ ;
\item  for all $S \in \mathcal{S}(X)$ it holds $ \# [S \setminus{c(S)} ] = 1$, where $c(S) = \cup [S_i: S_i \subset S]$. \end{enumerate}
Notice that the previous assumptions suggest a tree structure on $X$.

\begin{proposition}
\label{prop:family}
A family $C = \set{\lambda_S } $ provides a distance definable by a weighted tree $T = (X,w)$, with root $x_0$, if, and only if, $\lambda_S > 0$ is equivalent to $S \in \mathcal{S}(X)$, where $\mathcal{S}(X)$ satisfies all the assumptions i)--iii) listed above. 
\end{proposition}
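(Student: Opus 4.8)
The plan is to establish the two implications separately, translating between the combinatorial data of the set-family $\mathcal S(X)$ and the tree structure on $X$. First I would treat the "if" direction. Suppose $\mathcal S(X)$ satisfies i)--iii). I would build a rooted tree $T=(X,w)$ as follows: conditions ii) and iii) say that the sets in $\mathcal S(X)$, ordered by inclusion, form a forest-like poset in which each $S$ has a unique "own" element $v_S\in S\setminus c(S)$; mapping $S\mapsto v_S$ is then a bijection between $\mathcal S(X)$ and $X\setminus\{x_0\}$ (surjectivity uses that every $x\ne x_0$ must lie in some minimal $S$ containing it, which one gets from iii) applied to the $\subseteq$-minimal members covering $x$; if no such $S$ existed one could still be forced to include $x$ by a covering argument, so here I would actually need the implicit assumption that $\bigcup\mathcal S(X)=X\setminus\{x_0\}$, which the hypotheses of the proposition supply via "$\lambda_S>0 \iff S\in\mathcal S(X)$" together with the requirement that $C$ define a genuine distance — I'd flag this). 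Declare the parent of $v_S$ to be $v_{S'}$, where $S'$ is the $\subseteq$-smallest member of $\mathcal S(X)$ properly containing $S$, and the parent of a maximal-set vertex to be $x_0$; condition ii) guarantees this "smallest containing set" is well defined, so $T$ is a tree rooted at $x_0$. Put weight $w(v_S,v_S^+)=\lambda_S$ on that edge. Then one checks $S=\{y: y\succeq v_S\}=S_{(v_S,v_S^+)}$, so by the unique edge-decomposition \eqref{eq:deco} the graph metric $d_{T,w}$ equals $\sum_{S\in\mathcal S(X)}\lambda_S\delta_S$, i.e.\ the distance attached to $C$.

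For the converse, start from a weighted tree $T=(X,w)$ rooted at $x_0$ and set $\mathcal S(X)=\{S_e : e\in E(T)\}$ with $S_e$ the component of $T\setminus e$ not containing $x_0$, and $\lambda_{S_e}=w_e$ (and $\lambda_S=0$ otherwise). I would verify i)--iii) directly: i) is immediate from the choice of $S_e$; ii) is the standard laminarity of the family of "below-an-edge" vertex sets in a rooted tree — two such sets are nested or disjoint because $\preceq$ is a tree order; iii) holds because $S_e\setminus c(S_e)$ is exactly $\{x\}$ where $e=(x,x^+)$, since the sub-sets of $S_e$ in the family are precisely the $S_{e'}$ for edges $e'$ strictly below $e$, whose union is $S_e\setminus\{x\}$. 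This gives the "only if" direction.

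The main obstacle I anticipate is the bookkeeping in the "if" direction: showing that $S\mapsto v_S$ is genuinely a bijection onto $X\setminus\{x_0\}$ and that the parent assignment is well defined and acyclic, so that one really recovers a tree (not a forest or something with the root misplaced). Condition iii) is the crucial lever — it forces each set to contribute exactly one new vertex as one climbs the inclusion order — but one has to combine it carefully with ii) (nestedness) and with the fact that $C$ defines a metric on all of $X$ (hence every non-root vertex is separated from $x_0$ by some $\delta_S$, i.e.\ lies in some $S\in\mathcal S(X)$). Once the bijection and the parent map are in place, identifying $S$ with the principal down-set $\{y\succeq v_S\}$ and invoking the uniqueness of \eqref{eq:deco} is routine. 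I would also remark that \Cref{th:AE-vs-CUT} then immediately yields $\normat{C}{\xi}=\normat{\ArensEells}{\xi}$ for exactly these families, which is presumably the point of isolating the structure $\mathcal S(X)$.
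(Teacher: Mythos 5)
Your proposal is correct and follows essentially the same route as the paper: the tree is built from the family by taking the distinguished element of each $S$ as a vertex whose parent is the distinguished element of the minimal member of $\mathcal S(X)$ properly containing $S$ (or $x_0$ if none), with weight $\lambda_S$, and the converse uses the edge-cut sets $S_{(x,x^+)}$ together with \cref{eq:deco}. The paper explicitly skips the bookkeeping you flag (bijectivity of $S\mapsto v_S$ and the covering of $X\setminus\set{x_0}$), so your extra care there is a welcome addition rather than a deviation.
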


\begin{proof}
According to \cref{eq:deco}, if $T$ is a rooted tree, set $\mathcal{S}(X) = \set{S_{(x,x^+)}, \forall x \neq x_0 }$, where $S_{(x,x^+)} = \set{y \in X : y \succeq x}$. This family satisfies i)--iii). Setting $\lambda_{S_{(x,x^+)}} = d(x,x^+) $, we get the desired implication.

Conversely, let $\mathcal{S}(X)$ satisfy i)--iii). By iii) there is a distinguished point $x = \phi(S) \in S \setminus{c(S)}$, for each $S \in \mathcal{S}(X)$. 
Moreover, given $S \in \mathcal{S}(X)$, consider the collection of the elements $ S_i \in \mathcal{S}(X)$ for which $S \subset S_i$. Condition ii) implies they forms a finite chain having a minimal element $\tilde{S}$, provided the collection is nonempty.

The construction of the tree is carried out as follows. $X$ is set of vertices; the edges of the tree are all the pairs $e = (\phi(S),\phi(\tilde{S}))$, by adding the pairs $(\phi(S),x_0)$, whenever the above collection is empty. Likewise, the weight $w(x,y)$ of the edge (x,y) will be defined as $w(x,y) = \lambda_S$, where $x = \phi(S)$.

While skipping details, we claim it is easy to check that this construction leads to a tree satisfying the conditions of the proposition. \qed
\end{proof}

Let now $Y$ be a tree-like space. It is not restrictive to assume $Y$ to be a subset of a tree $T = (X,w)$. From \Cref{th:AE-vs-CUT} and \Cref{prop:family}, it follows that: 
\begin{equation*}
\normat {\ArensEells (X)} {\xi}  =\normat {C} {\xi}  = \sum_{S \in \mathcal{S}(X)} \lambda_S  \aval{\sum_{x \in S} \xi(x)} \
\end{equation*}
for $\xi \in \ArensEells (X)$ and for some family $\mathcal{S}(X)$.

If in these equations we set $\xi = i_{\#} \eta$, where $i_{\#}: M_0(Y) \rightarrow M_0(X)$ is the canonical push-forward associated with the immersion $i: Y \rightarrow X$, by recalling that $\normat {\ArensEells (X)} {i_{\#} \eta} =  \normat{\ArensEells (Y)} { \eta}$, we get
\begin{equation}
\label{eq:SSS}
\normat {\ArensEells (Y)} {\eta}   = \sum_{S \in \mathcal{S}(X)} \lambda_S  \aval{\sum_{x \in S \cap Y} \eta(x)} \ .
\end{equation}
Hence the $\ArensEells$-norm for a tree-like space $Y$ is a C-norm generated by the trace-family $\mathcal{S^*}(Y) = \set{S \cap Y : S \in \mathcal{S}(X) }$. Notice though that property iii) fails for $\mathcal{S}^*(Y)$. Nevertheless, property ii) remains true, while property i) might be made true. It is enough to pick $x_0$ in $Y$ and select $\mathcal{S}(X)$ to be $x_0$-adapted.

A full treatment of the issues mentioned below would require too much space and would not be consistent with the scope of the present paper. Therefore, we limit ourselves giving the main concepts only. The interested reader is referred the to the quoted papers for a more in-depth analysis.

\citet{bandelt|dress:1992} developed a theory that permits to decompose every finite distance $d$ in a unique way. More specifically, its canonical decomposition is of the kind
\begin{equation*}
    d = d_0 + \sum_{\delta_S \in \Sigma_d} \alpha_d (S) \delta_S
\end{equation*}
where $d_0$ is called the \emph{split-prime residue}  of $d$, the coefficients $ \alpha_d (S)$ are strictly positive numbers, called \textit{isolation index}, while $\Sigma_d$ denotes the set of semi-metrics $\delta_S$ which are $d$-splits. Cf. \S 11.1.2 of \citet{deza|laurent:1997}.

A distance $d$ is said to be \emph{totally decomposable} if $d =  \sum_{\delta_S \in \Sigma_d} \alpha_d (S) \delta_S$ holds. That is, if in the canonical decomposition there is no split-residue, i.e., $d_0 = 0$. Clearl, a totally decomposable metric is $\ell_1$-embeddable but the converse implication is false.

According to this setting, we shall say that a C-norm on $M_0(X)$ is \emph{canonical} if it is induced by a totally decomposable distance $d =  \sum_{\delta_S \in \Sigma_d} \alpha_d (S) \delta_S$, and
\begin{equation*}
 \normat{C}{\xi} = \sum_{\delta_S \in \Sigma_d} \alpha_d (S) \aval{\sum_{z \in S} \xi(z)}  .   
\end{equation*}

To see an example, think that the discrete metric $d$ generated by the complete graph $K_n$ may admit  several decomposition for $n \geq 4$ (see \Cref{ex:1}), but its canonical decomposition is $d = \frac{1}{2} \sum_{x \in X} \delta_{\set{x}}$, as will be seen soon. Hence the metric in $K_n$ is totally decomposable and its canonical norm is given by $\normat{C}{\xi} = \frac{1}{2} \sum_{x \in X} \aval{\xi(x)}$.

Next statement provides the relation between norms for tree-like spaces. Recall that both trees and tree-like spaces have totally decomposable metrics.

\begin{theorem}
For every tree-like space $Y$ we have $\normat {\ArensEells (Y)} {\cdot} = \normat {C} {\cdot}$, where $\normat {C} {\cdot}$ is the canonical C-norm associated with the distance in $Y$.
\end{theorem}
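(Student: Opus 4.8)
The plan is to deduce this from the realization of the Arens--Eells norm as a cut norm already obtained in \eqref{eq:SSS}, together with the uniqueness of the Bandelt--Dress split decomposition. Fix a realization of $Y$ as a subset of a weighted tree $(X,w)$, choosing the root $x_0$ inside $Y$ and taking $\mathcal{S}(X)$ to be $x_0$-adapted. By \eqref{eq:deco} and \Cref{th:AE-vs-CUT}, $d_{T,w} = \sum_{e \in \mathcal{E}} w_e \delta_{S_e}$ and $\normat{\ArensEells(X)}{\cdot}$ is the C-norm of this realization; pushing forward along $i \colon Y \to X$ and using that $\normat{\ArensEells(X)}{i_\# \eta} = \normat{\ArensEells(Y)}{\eta}$, formula \eqref{eq:SSS} becomes
\begin{equation*}
\normat{\ArensEells(Y)}{\eta} = \sum_{e \in \mathcal{E}} w_e \aval{\sum_{x \in S_e \cap Y} \eta(x)}.
\end{equation*}
Thus $\normat{\ArensEells(Y)}{\cdot}$ is the C-norm attached to the realization $d_Y = \sum_{e \in \mathcal{E}} w_e \delta_{S_e \cap Y}$ of the metric on $Y$. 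Since a C-norm is determined by the family of coefficients that generates it, it is enough to show that this realization, after deleting the cuts $\delta_{S_e \cap Y}$ that vanish identically and adding up the weights of those that coincide, is precisely the canonical decomposition $\sum_{\delta_S \in \Sigma_{d_Y}} \alpha_{d_Y}(S) \delta_S$ of $d_Y$.

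For that, first observe that the edge cuts of a rooted tree form a laminar family: for $x \neq x_0$ one has $S_{(x,x^+)} = \set{y \in X : y \succeq x}$, and two such sets are nested when their minimal elements are $\preceq$-comparable and disjoint otherwise --- this is exactly property ii) of \Cref{prop:family}. Intersecting with $Y$ preserves the laminar property, so the nontrivial members of $\set{S_e \cap Y : e \in \mathcal{E}}$, read as splits $\set{S_e \cap Y,\, Y \setminus S_e}$ of $Y$, form a pairwise compatible (tree-like) split system.

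The substantive step is then the following fact from the Bandelt--Dress theory (see \citet{bandelt|dress:1992} and \S~11.1.2 of \citet{deza|laurent:1997}): a conic combination $\sum_j \mu_j \delta_{A_j}$ over a compatible split system with all $\mu_j > 0$ is totally decomposable and equals its own canonical decomposition, the $A_j$ being its $d$-splits and the $\mu_j$ their isolation indices. Applying this to $d_Y = \sum_j \mu_j \delta_{A_j}$, where the $A_j$ range over the distinct nontrivial traces $S_e \cap Y$ and $\mu_j$ is the sum of the $w_e$ over the edges giving $A_j$, identifies this realization with the canonical one. Hence the C-norm in the displayed formula is the canonical C-norm of $d_Y$, and $\normat{\ArensEells(Y)}{\cdot} = \normat{C}{\cdot}$.

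I expect the delicate point to be precisely this last invocation: one must be certain that no split of $Y$ outside $\set{A_j}$ carries a positive isolation index and that the weights $\mu_j$ are exactly the isolation indices $\alpha_{d_Y}(A_j)$, which is where total decomposability of tree-like metrics --- equivalently, the vanishing of the split-prime residue $d_0$ --- is essential, so that the realization we exhibit accounts for all of $d_Y$. Since the paper only sketches the split-decomposition framework, I would present this as a citation to \citet{deza|laurent:1997} combined with the elementary laminarity observation above, rather than reproving the uniqueness theorem. An equivalent route would be to realize $d_Y$ by its own (essentially unique) minimal tree and apply \Cref{th:AE-vs-CUT} and \Cref{prop:family} directly, but this again rests on the same uniqueness.
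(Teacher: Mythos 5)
Your proposal is correct and follows the same overall route as the paper's proof: both start from \eqref{eq:SSS}, which exhibits $\normat{\ArensEells(Y)}{\cdot}$ as the C-norm of the realization $d_Y=\sum_{e}w_e\,\delta_{S_e\cap Y}$, and then identify that realization with the canonical Bandelt--Dress decomposition. Where you differ is in how that identification is justified. The paper argues that $d_Y$ is a minor of the totally decomposable tree metric, that total decomposability passes to minors, and from this concludes that the exhibited decomposition is canonical; but, as the paper's own \Cref{ex:1} illustrates, a totally decomposable metric can admit non-canonical realizations, so total decomposability by itself does not single out a particular decomposition. Your extra observation --- that the traces $S_e\cap Y$ form a laminar, pairwise compatible split system, and that a positive conic combination over a compatible split system is its own canonical decomposition, the cuts being the $d$-splits and the weights the isolation indices --- is precisely the fact needed to close that step, with the appropriate citation to \citet{bandelt|dress:1992} and \citet{deza|laurent:1997}. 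In short, your argument buys a fully justified identification of the realization with the canonical one at the single delicate point you flagged, whereas the paper's version leaves that identification implicit.
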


\begin{proof}
By \Cref{prop:family} and the discussion above, we know that the distance $d$ in $Y$ admits the decomposition
\begin{equation}
\label{eq:canon}
d = \sum_{S \in \mathcal{S}(X)} \lambda_S \delta_{ \set{S \cap Y}} \ .  
\end{equation}
Hence it is a \emph{minor} of the totally decomposable metric $d' = \sum_{S \in \mathcal{S}(X)} \lambda_S \delta_{S} $. As totally decomposability is obviously preserved by taking minors, we infer that the C-norm associated with the decomposition \eqref{eq:canon} is canonical. Consequently, the equality in \cref{eq:SSS} provides the desired result. \qed
\end{proof}

\section{Beyond trees}\label{sec:extensions}

In this last section, we study a few extensions along two distinct lines of research both suggested by the previous results on trees.

A first extension is based on computing the K-distance through the spanning trees of a given arbitrary graph. \citet{mendivil:2017} has suggested a different approach, based on starting from a single spanning tree and then reach the full graph via a quotient map.

The distance induced by trees is a rigid $\ell_1$-embeddable metric. A second development is thus related to the study the extend to which the results for trees can be generalized to other types of $\ell_1$-embeddable metrics.

\subsection{Spanning trees}
\label{sec:spanning-tree}
If $G = (X,\mathcal E)$ is a connected graph, a spanning tree of $G$ is a tree $T = (X,\mathcal T)$ with $\mathcal T \subset \mathcal{E}$. In other words, $T$ is a sub-graph of $G$ with the same vertex set as $G$ and with the minimum number of edges that allows connection. See, for example, \S~1.2 of \citet{bollobas:1998}.

The inclusion relation $\mathcal T \subset \mathcal{E}$ implies the inequality $d \leq d_T$ between the two distances $d$ and $d_T$ induced by $G$ and $T$, respectively. Hence, $\normat{G}{\xi} \leq \normat{T}{\xi}$ holds for the two graphs. 

Denoting by $\operatorname{ST}(G)$  the totality  of the spanning trees of $G$, it follows that
\begin{equation*}
\normat{G}{\xi} \leq \min_{ T\in \operatorname{ST}(G)} \normat{T}{\xi} \ .    
\end{equation*}

In order to show that the above inequality is in fact an equality, we need the following lemma about growing a forest to a tree. The result is provided by the Kruskal algorithm,  \citet{kruskal:1956}. See also \cite[p. 10]{bollobas:1998}.

\begin{lemma}
\label{lemma:Fore}
Let G be a connected graph and $F $ be a forest contained in $G$. There exists a spanning tree of $G$ which extends $F$.
\end{lemma}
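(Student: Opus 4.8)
The plan is to prove \Cref{lemma:Fore} by the standard Kruskal-type greedy argument: start from the forest $F$ and repeatedly add edges of $G$ that do not create a cycle until no more can be added, then verify that the result is a spanning tree extending $F$.

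More precisely, I would argue as follows. Consider the collection of all forests $F'$ with $F \subseteq F' \subseteq G$ (i.e.\ $V(F') = X$, $E(F)\subseteq E(F')\subseteq E(G)$, and $F'$ acyclic). This collection is nonempty (it contains $F$) and finite, so it has an element $T$ with the maximum number of edges. I claim $T$ is connected, hence a spanning tree, and by construction it contains $F$. Suppose, for contradiction, that $T$ is not connected, so $T$ has at least two connected components $C_1, C_2$ (as graphs on subsets of $X$). Since $G$ is connected, there is a path in $G$ from a vertex of $C_1$ to a vertex of $C_2$; along this path there must be an edge $e = \set{x,y}$ of $G$ with $x$ and $y$ in different components of $T$. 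Adding $e$ to $T$ cannot create a cycle, because any cycle through $e$ would give a $T$-path between $x$ and $y$, contradicting that they lie in different components. Hence $T \cup \set{e}$ is still a forest containing $F$, with one more edge than $T$, contradicting maximality. Therefore $T$ is connected, and being an acyclic connected spanning subgraph of $G$ containing $F$, it is the desired spanning tree.

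The argument is almost entirely routine; there is no real obstacle. The only point requiring a moment's care is the elementary graph-theoretic fact that adding an edge between two distinct components of a forest keeps it acyclic — this is immediate from the characterization of a forest as a graph in which any two vertices are joined by at most one path. One could alternatively phrase the proof as an explicit run of Kruskal's algorithm initialized with the edge set $E(F)$ already selected, but the maximal-forest formulation above is cleaner and self-contained, and it is all that is needed in the sequel (where the lemma feeds into the proof that $\normat{G}{\xi} = \min_{T\in\operatorname{ST}(G)}\normat{T}{\xi}$).
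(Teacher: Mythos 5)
Your proof is correct and is essentially the standard maximal-forest/Kruskal argument that the paper itself invokes only by citation (to Kruskal and to Bollob\'as) without writing out the details. Nothing further is needed.
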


\begin{theorem}
\label{prop:ENV}
The Arens-Eells norm of any connected graph $G$ is the envelope of the norms of its spanning trees. That is,
\begin{equation*}
  \normat {G} {\xi} = \min_{T \in \operatorname{ST}(G)} \normat{T}{\xi}.
\end{equation*}
\end{theorem}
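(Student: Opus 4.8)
The plan is to establish the reverse inequality $\normat{G}{\xi} \geq \min_{T \in \operatorname{ST}(G)} \normat{T}{\xi}$, since the inequality $\normat{G}{\xi} \leq \normat{T}{\xi}$ for every spanning tree $T$ has already been observed. I would start from an optimal representation of $\xi$ in $\AEspace X$ with respect to the graph $G$. By \Cref{prop:Vague}, applied to the weighted graph $G$ with its graphic metric, one may choose a representation
\begin{equation*}
\xi = \sum_{x,y} a(x,y)(\delta_x - \delta_y), \qquad \normat{G}{\xi} = \sum_{x,y} \aval{a(x,y)} d_G(x,y),
\end{equation*}
in which every pair $(x,y)$ with $a(x,y) \neq 0$ consists of \emph{close} vertices — so that $d_G(x,y) = w(x,y)$ and $xy$ is an actual edge of $G$ — and, crucially, the support graph $F = \setof{(x,y)}{a(x,y) \neq 0}$ contains no cycle. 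Hence $F$ is a forest contained in $G$.

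Now I would invoke \Cref{lemma:Fore} (Kruskal): there is a spanning tree $T \in \operatorname{ST}(G)$ with $F \subseteq T$. The point is that the very same representation of $\xi$ is a legal representation with respect to $T$: every edge of $F$ is an edge of $T$, and along a tree edge the graphic distance equals its weight, which coincides with the $G$-weight, i.e.\ $d_T(x,y) = w(x,y) = d_G(x,y)$ for each $(x,y)$ in the support. Therefore, using the definition of the Arens–Eells norm for the tree $T$ as an infimum over representations,
\begin{equation*}
\normat{T}{\xi} \leq \sum_{x,y} \aval{a(x,y)} d_T(x,y) = \sum_{x,y} \aval{a(x,y)} d_G(x,y) = \normat{G}{\xi}.
\end{equation*}
This gives $\min_{T \in \operatorname{ST}(G)} \normat{T}{\xi} \leq \normat{G}{\xi}$, and combined with the easy direction yields equality; moreover the minimum is attained, so one may write $\min$ rather than $\inf$.

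The main subtlety to get right is the verification that $d_T$ and $d_G$ agree on the edges actually used — this is where closeness in \Cref{prop:Vague} does the work, since on a non-close edge one would only have $d_G(x,y) \leq w(x,y)$ and the estimate could fail. A secondary point is that \Cref{prop:Vague} is stated for distances generated by a weighted graph, so I should be explicit that $G$ with weights $w$ and graphic metric $d_G$ is exactly such a space, and that the forest $F$ inherits those same weights. Everything else is bookkeeping; no delicate estimate or compactness argument is needed beyond the already-cited lemmas.
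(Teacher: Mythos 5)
Your argument is correct and is essentially identical to the paper's proof: both take an optimal acyclic representation supported on close pairs via \Cref{prop:Vague}, extend the resulting forest to a spanning tree by \Cref{lemma:Fore}, and use closeness to identify $d_T(x,y)=w(x,y)=d_G(x,y)$ on the support edges so that the same representation bounds $\normat{T}{\xi}$ from above. Your explicit remark on why closeness is indispensable (a non-close edge would only give $d_G(x,y)\leq w(x,y)$) is exactly the point the paper relies on when it invokes item (i) of \Cref{prop:Vague}.
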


\begin{proof}
\Cref{prop:Vague} implies that the Arens-Eells norm of $\xi$ is
\begin{equation*}
\normat {\ArensEells} {\xi} = \sum_{(x,y) \in \mathcal{F}} \aval{a(x,y)} d(x,y)  
\end{equation*}
where $F = (X,\mathcal{F})$ is an a-cyclic subgraph of $G = (X,\mathcal{E})$. In other words, $(X,\mathcal{F})$ is a forest. By \Cref{lemma:Fore}, there is a spanning tree $T = (X, {\mathcal T})$ extending such a forest. If we enlarge the domain of the functions $a(x,y)$ to $(x,y) \in {\mathcal T}$, by assigning the value $a(x,y) = 0$, outside $\mathcal{F}$, we can re-write the equation above as
\begin{equation*}
\normat {\ArensEells} {\xi} = \sum_{(x,y) \in {\mathcal T}} \aval{a(x,y)} d(x,y) = \sum_{(x,y) \in {\mathcal T}} \aval{a(x,y)} w(x,y),
\end{equation*}
where the last equality follows from \cref{item:vague-2} of \Cref{prop:Vague}, since the pair of vertices $x$ and $y$ are close, as long as $a(x,y) \neq 0$.

To conclude,
\begin{equation*}
\normat {\ArensEells} {\xi} = \sum_{(x,y) \in {\mathcal T}} \aval{a(x,y)} w(x,y) \geq \normat {T} {\xi} \geq \min_{T \in \operatorname{ST}(G)} \normat{T}{\xi}
\end{equation*}
that proves our assertion.
\qed \end{proof}

\subsection{A worked out example: cycle graphs}\label{sec:cycle}
It should be of some interest to solve by hand a few examples of what was stated in \Cref{prop:ENV}. The cyclic case has already been analyzed by \citet{cabrelli|molter:1995} as well as \citet{mendivil:2017} but by quite different techniques.

A labelled weighted cyclic graph (or circuit) of order $n$, denoted by $C_n$, is the graph
\begin{equation*}
1 \rightarrow 2 \rightarrow \dots \rightarrow n \rightarrow 1
\end{equation*}
consisting of a unique cyclic path. Set $d_i = d(i,i+1) $ the distance between the two adjacent vertices $i$ and $i+1$.

Clearly the cycle graph $C_n$ admits $n$ spanning trees $\set{T_i}_{i=1}^n$ obtained by ruling out each single edge of $C_n$.

Next proposition provides explicitly the Arens-Eells norm $\normof{\cdot}_{C_n}$ for the cycle $C_n$ as well as a constructive proof of the envelope property.

\begin{proposition}
\label{prop:CYL}
Let $C_n$ be the cycle graph of order $n$ with vertices $1,2,\dots,n$ and edges $\set{i,i+1}$, $n+1=1$.  Define the real function
\begin{equation*}
 \Phi(t) = \sum_{i=1}^n \aval{t - \xi_1 -\xi_2 - \dots - \xi_i} d_i \ , \quad t \in \reals \ .  
\end{equation*}
Then, for each $\xi \in \AEspace X$,
\begin{equation*}
\normof{\xi}_{C_n} = \min_{t \in \reals} \Phi(t) = \min_{i=1,2,..,n} \Phi (\xi_1 +\xi_2 + \dots + \xi_i) = \min_{i=1,2,..,n} \normof{\xi}_{T_i}  
\end{equation*}
where $T_i \in \operatorname{ST}(C_n)$. Specifically, $\Phi (\xi_1 +\xi_2 + \dots + \xi_i)$ is the norm for the tree obtained by removing the edge $\set{i-1,i}$.
\end{proposition}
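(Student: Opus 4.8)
The plan is to reprove the envelope property of \Cref{prop:ENV} directly for the circuit, then evaluate each spanning-tree norm through the linear-order formula of \Cref{ex:lin}, and finally pass from the minimum over the $n$ partial sums to the minimum over all real $t$ by a one-variable convexity argument. For the first step I would take an optimal \AE-representation $\xi=\sum a(x,y)(\delta_x-\delta_y)$ supplied by \Cref{prop:Vague}, whose support is a forest consisting of close pairs. Because the only cycle in $C_n$ is the whole circuit, this forest omits some edge $\set{k,k+1}$, and hence is contained in the spanning tree $T_k = C_n\setminus\set{k,k+1}$; each close pair in it is a $C_n$-edge, along which the circuit distance $d$, the edge weight, and the $d_{T_k}$-length all agree, so the $C_n$-cost and the $T_k$-cost of this representation coincide. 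This gives $\normof{\xi}_{C_n}\ge\normof{\xi}_{T_k}$, while $\normof{\xi}_{C_n}\le\normof{\xi}_{T_i}$ holds for every $i$ from $d\le d_{T_i}$ (cf. \Cref{prop:LARG}); together these yield $\normof{\xi}_{C_n}=\min_{i=1,\dots,n}\normof{\xi}_{T_i}$, i.e. the envelope property for $C_n$.

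Next I would compute $\normof{\xi}_{T_k}$ explicitly. Deleting $\set{k,k+1}$ turns $C_n$ into the linear order $k{+}1\to\dots\to n\to 1\to\dots\to k$ carrying the same weights $d_\ell$, so \Cref{ex:lin} applies and $\normof{\xi}_{T_k}$ is the weighted $\ell^1$-norm of the cumulative sums along this path. Setting $S_\ell=\xi_1+\dots+\xi_\ell$, so that $S_0=S_n=0$, one checks that the cumulative sum accumulated up to the edge $\set{\ell,\ell+1}$ of this ``wrapped'' path equals $S_\ell-S_k$; therefore $\normof{\xi}_{T_k}=\sum_{\ell=1}^n d_\ell\aval{S_\ell-S_k}=\Phi(S_k)$, the $\ell=k$ summand vanishing in accordance with the deleted edge. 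As $k$ runs over $1,\dots,n$ and since $S_0=S_n$, the $n$ numbers $\normof{\xi}_{T_k}$ are exactly $\Phi(S_1),\dots,\Phi(S_n)$, so $\min_i\normof{\xi}_{T_i}=\min_{i=1,\dots,n}\Phi(\xi_1+\dots+\xi_i)$.

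Finally, $\Phi$ is a nonnegative linear combination of the convex functions $t\mapsto\aval{t-S_i}$, hence convex and piecewise affine with breakpoints contained in $\set{S_1,\dots,S_n}$; connectedness of $C_n$ forces $\sum_i d_i>0$, so $\Phi$ is coercive, attains its minimum, and does so at a breakpoint. Thus $\min_{t\in\reals}\Phi(t)=\min_i\Phi(S_i)$, and stringing the three identities together gives \Cref{prop:CYL}. As an alternative to the first two steps, one may instead parametrise every representation of $\xi$ supported on the circuit edges by the $1$-parameter family $b_i=t+S_i$ and read off $\sum_i\aval{b_i}d_i=\Phi(-t)$ directly; this is tidy, but then the close-pair reduction of the first step has to be revisited, since some circuit edge may fail to be close.

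The main obstacle I anticipate is the bookkeeping in the second step: correctly tracking, across each edge of the path that wraps through the deleted edge, which partial sum of $\xi$ is picked up by \Cref{ex:lin}, and recognising the resulting expression as $\Phi(S_k)$. A secondary subtlety is that an edge of $C_n$ need not be close, so $d$ may be strictly smaller than the corresponding weight there; this is exactly why the reduction in the first step is carried out through \Cref{prop:Vague} rather than by manipulating the circuit edges directly.
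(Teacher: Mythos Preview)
Your argument is correct and runs in the reverse order from the paper's. You first establish the envelope $\normof{\xi}_{C_n}=\min_i\normof{\xi}_{T_i}$ by specialising the forest-support reduction of \Cref{prop:Vague} to the circuit (this is essentially the proof of \Cref{prop:ENV} for $C_n$), then evaluate each $\normof{\xi}_{T_k}$ through \Cref{ex:lin} as $\Phi(S_k)$, and finally pass to $\min_t\Phi(t)$ by convexity. The paper instead takes the route you list as an alternative: invoking \Cref{item:vague-2} of \Cref{prop:Vague} it restricts to edge-supported representations $\xi=\sum_i a_i(\delta_i-\delta_{i+1})$, solves for the $a_i$ in terms of the free parameter $t=-a_n$ to obtain $\normof{\xi}_{C_n}=\min_t\Phi(t)$ directly, and only afterwards identifies each breakpoint value $\Phi(S_j)$ with a spanning-tree norm by explicit algebra. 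Your worry that a non-close circuit edge might spoil this alternative is unnecessary: close-edge-supported representations are a subclass of all edge-supported ones, so the sandwich $\normat{\ArensEells}{\xi}\le\inf_{\text{edge}}\le\inf_{\text{close edge}}=\normat{\ArensEells}{\xi}$ (the last equality being \Cref{prop:Vague}) closes the gap. Your route makes the connection with \Cref{prop:ENV} explicit at the cost of a little more bookkeeping; the paper's is shorter and exhibits the one-parameter family of representations. As a side remark, your labelling $\Phi(S_k)=\normof{\xi}_{T_k}$ with $T_k=C_n\setminus\set{k,k{+}1}$ is the correct one; the phrase ``removing the edge $\set{i{-}1,i}$'' in the statement is an off-by-one slip, as a direct check with $n=3$ confirms.
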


As observed by \citet{mendivil:2017}, the value $t$ that minimizes $\Phi$ is the weighted median value of the distribution $\xi$.

\begin{proof}
  Thanks to \Cref{item:vague-2} of  \Cref{prop:Vague}, the restriction of the elements $a(x,y)$ leads to the representations
  \begin{equation*}
  \xi = a_{1} (\delta_1 - \delta_2) + a_{2} (\delta_2 - \delta_3) + \dots + a_{n} (\delta_n - \delta_1)  
\end{equation*}
with $a_{i} \in \reals$.
By inverting the previous relation and introducing the parameter $t= -a_{n}$, we get easily that
\begin{equation*}
    a_{i} = t - \xi_1 - \xi_2 - \dots - \xi_i
\end{equation*}
for $i= 1,2, \dots, n$. This implies that every vector $\xi$ admits $\infty^1$-many representations, and Arens-Eells formula \cref{eq:AEnorm} for the norm becomes $\inf_{t \in \reals} \Phi(t)$.

Of course, this piece-wise linear and convex function $\Phi$ reaches the minimum value at one of the $n$ points $t = \xi_1+\xi_2- \dots + \xi_i$, ($i= 1,2,\dots,n$), and so also the second formula is checked. 

It remains to show that the values $\Phi (\xi_1 +\xi_2 + \dots + \xi_i)$ are nothing but the Arens-Eells norms of the spanning trees of $C_n$.

Fix an index $j$ and evaluate the function $\Phi$ at the point $\xi_1 + \xi_2 +\cdots +\xi_j$, then
\begin{equation*}
\Phi (\xi_1 +\xi_2 + \dots + \xi_i) = \sum_{i=2}^{i=j} \aval{\xi_i + \dots + \xi_j}d_{i-1} + \sum_{k=1}^{k=n-j} \aval{\xi_{j+1} + \dots + \xi_{j+k}}d_{j+k}   
\end{equation*}
If now we get rid of variable $\xi_j$, by means of the relation $\xi_j = - \sum_{i \neq j}\xi_i $, it is not difficult to check that $\Phi (\xi_1 +\xi_2 + \dots - \xi_i)$ turns out to be the norm of the linear tree
\begin{equation*}
  j \rightarrow j+1 \rightarrow \dots \rightarrow n \rightarrow 1 \rightarrow \dots \rightarrow j-1
\end{equation*}
by taking $j-1$ as root.
\qed \end{proof}

The method employed in \Cref{prop:CYL}, might be duplicated for other graphs for which $\# X = \# \mathcal E = n$, like in the cycle graphs. However, the case  $\# \mathcal E > n$ is more interesting and clearly the function $\Phi$, in this case, will be no longer a scalar one.

\begin{example}\label{example:diagonal}
By way of example, consider the two-cycles graph
$\vcenter{\hbox{\begin{tikzpicture}
  [scale=.15,auto=left]
  \node (n1) at (-5,0) {1};
  \node (n2) at (0,-5)  {2};
  \node (n3) at (5,0)  {3};
  \node (n4) at (0,5) {4};
  \foreach \from/\to in {n1/n2,n2/n3,n3/n4,n4/n1,n2/n4}
  \draw (\from) -- (\to);
\end{tikzpicture}}}$
with $\# X =4$, $\# \mathcal E = 5$, and unit weight. The function to minimize turns out to be
\begin{equation*}
\Phi (t,u) = \aval{u} + \aval{t} + \aval{\xi_1 - u} + \aval{\xi_1 + \xi_2 -t +u} + \aval{u-t -\xi_4} \ , \quad (t,u) \in \reals^2 \ .  
\end{equation*}

After tedious algebra, the norm of the vector $\xi =(\xi_1,\xi_2,\xi_3,\xi_4)$ turns out to be the minimum of the following 8 functionals:
$\aval{\xi_1} + \aval{\xi_2} + \aval{\xi_1 + \xi_4}$, \ $\aval{\xi_1} + \aval{\xi_4} + \aval{\xi_1 + \xi_2}$, \ $\aval{\xi_1} + \aval{\xi_3} + \aval{\xi_1 + \xi_4}$, \ $\aval{\xi_1} + \aval{\xi_3} + \aval{\xi_1 + \xi_2}$, \ $\aval{\xi_2} + \aval{\xi_3} + \aval{\xi_1 + \xi_2}$, \ $\aval{\xi_3} + \aval{\xi_4} + \aval{\xi_1 + \xi_4}$, \ $\aval{\xi_1} + \aval{\xi_3} + \aval{\xi_4}$, \ $\aval{\xi_1} + \aval{\xi_2} + \aval{\xi_3}$, \
which are just the norms of the $8$ spanning trees of the graph.
\end{example}

\subsection{$\ell_1$-embeddable metrics}

With the notations of \Cref{sub:CUTS}, fix a distance $d \in \CUT(X)$, i.e., let $d$ be an $\ell_1$-embeddable distance having realization $d = \sum_{S \subseteq X} \lambda_S \delta_S$, and fix a distinguished point $x_0$ of $X$. For $C = \set{\lambda_S}$, the norm $\normat C \cdot$ was defined in \Cref{def:cut-norm}.
We suppose here that the decomposition be $x_0$-adapted.

Define the family of functions $\epsilon : 2^X \rightarrow \set{-1,+1}$. To each such $\epsilon$, we can attach the function $u_\epsilon : X \rightarrow \reals$ defined by
\begin{equation}\label{eq:u-epsilon}
    u_\epsilon (x) = \sum_{S \in \mathcal{F}_x} \lambda_S \ \epsilon (S) \quad \forall x \in X \ ,
\end{equation}
where $\mathcal{F}_x$ is the principal filter generated by the point $x \in X$. Observe that $u_\epsilon (x_0) = 0$, since the representation is $x_0$-adapted.

Clearly, $u_\epsilon \in   \operatorname {Lip}_1^+(d)$ for every function $\epsilon$, but not necessarily all of them are extremal functions of the unit ball. The theorem below reveals a relation between the extremal functions of the unit ball and the class of $u_\epsilon$ functions. The proof is, in fact, obvious for trees and the general case parallels the second proof of \Cref{th:tree}.
\begin{theorem}
\label{th:EMB}
It holds $\normof{\cdot}_{C} = \normof{\cdot}_{\ArensEells}$, provided every $u \in \Extreme \operatorname {Lip}_1^+(d)$ is of the form given in \cref{eq:u-epsilon}.
\end{theorem}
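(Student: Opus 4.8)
The plan is to argue entirely on the dual side. By the Arens--Eells duality \eqref{eq:DUAL}, $\normat{\ArensEells}{\xi} = \sup\setof{\scalarof{\xi}{u}}{u \in \Lipschitz_1^+(d)}$, and since $X$ is finite the space $\Lipschitz^+(d)$ is finite-dimensional, so its unit ball is compact and the linear functional $u \mapsto \scalarof{\xi}{u}$ attains its maximum at an extreme point of $\Lipschitz_1^+(d)$. The standing hypothesis says that every such extreme point equals $u_\epsilon$ for some $\epsilon\colon 2^X \to \set{-1,+1}$, while conversely every $u_\epsilon$ already lies in $\Lipschitz_1^+(d)$; hence
\[
\normat{\ArensEells}{\xi} = \max\setof{\scalarof{\xi}{u_\epsilon}}{\epsilon\colon 2^X \to \set{-1,+1}} \ .
\]

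Next I would evaluate $\scalarof{\xi}{u_\epsilon}$ by substituting \eqref{eq:u-epsilon} and exchanging the two finite summations. Since $S \in \mathcal{F}_x$ precisely when $x \in S$, one gets
\[
\scalarof{\xi}{u_\epsilon} = \sum_{x \in X}\xi(x)\sum_{S \in \mathcal{F}_x}\lambda_S\,\epsilon(S) = \sum_{S \subseteq X}\lambda_S\,\epsilon(S)\sum_{x \in S}\xi(x) \ .
\]
Because the values $\epsilon(S)$ may be chosen independently, the maximum over $\epsilon$ is achieved by taking $\epsilon(S) = \Varsig\left(\sum_{x\in S}\xi(x)\right)$, which yields
\[
\max_{\epsilon}\scalarof{\xi}{u_\epsilon} = \sum_{S \subseteq X}\lambda_S\,\aval{\sum_{x \in S}\xi(x)} = \normat{C}{\xi} \ .
\]
Combining the two displays gives $\normat{\ArensEells}{\xi} = \normat{C}{\xi}$ for every $\xi \in M_0(X)$. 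Alternatively, one may invoke the inequality $\normat{C}{\cdot}\le\normat{\ArensEells}{\cdot}$ already proved in \Cref{sub:CUTS} and read the computation merely as the reverse bound $\normat{\ArensEells}{\xi} = \max_\epsilon\scalarof{\xi}{u_\epsilon} \le \normat{C}{\xi}$.

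I do not expect any serious obstacle: the argument is a duality reduction followed by an interchange of sums. The two points that deserve a line of care are, first, that each $u_\epsilon$ genuinely belongs to $\Lipschitz_1^+(d)$ --- which follows from $\aval{u_\epsilon(x)-u_\epsilon(y)} \le \sum_{S}\lambda_S\,\delta_S(x,y) = d(x,y)$ together with $u_\epsilon(x_0)=0$, valid because the realization is $x_0$-adapted --- so that the two maxima actually coincide rather than yielding only one inequality; and second, the use of finite-dimensionality to guarantee that the supremum in \eqref{eq:DUAL} is attained at an extreme point. It is exactly the hypothesis ``every extreme point is some $u_\epsilon$'' that upgrades ``the maximizer is an extreme point'' to ``the maximizer is some $u_\epsilon$''; without it the argument only delivers $\normat{C}{\xi}\le\normat{\ArensEells}{\xi}$.
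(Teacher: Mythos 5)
Your proposal is correct and follows essentially the same route as the paper: evaluate $\scalarof{\xi}{u_\epsilon}$ by interchanging the two finite sums to get $\sum_{S}\lambda_S\,\epsilon(S)\sum_{x\in S}\xi(x)$, then use the hypothesis that the extreme points are exactly (contained among) the $u_\epsilon$'s to identify the dual supremum with the maximum over $\epsilon$, attained at $\epsilon(S)=\Varsig\bigl(\sum_{x\in S}\xi(x)\bigr)$. Your added care about the attainment of the supremum at an extreme point and the membership $u_\epsilon\in\Lipschitz_1^+(d)$ only makes explicit what the paper leaves implicit.
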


\begin{proof}
For every $\xi \in M_0(X)$ and $u_\epsilon$, we have
\begin{multline*}
\scalarof{\xi}{u_\epsilon} = \sum_{x \in X} \xi (x) u_\epsilon (x) = \sum_{x \in X} \xi (x) \sum_{x \in S} \lambda_S \epsilon (S) = \\ \sum_{x \in X} \xi (x) \sum_{S \subset X} \lambda_S \epsilon (S) I(x,S) = 
\sum_{S \subset X} \lambda_S \epsilon (S) \sum_{x \in S} \xi (x) \ ,
\end{multline*}
where $I(x,S) = 1$ if $x \in X$ and $I(x,S) = 0$ elsewhere.

To conclude, under our assumptions the norm $\normat{\ArensEells}{\xi}$ will be reached by maximizing $\scalarof{\xi}{u_\epsilon}$ over the class of functions $\epsilon$. Actually, $\epsilon(S) = \Varsig (\sum_{x \in S} \xi (x) )$ and thus $\normof{\xi}_{C} = \normof{\xi}_{\ArensEells}$.
\qed \end{proof}

\begin{example}
In view of the example of \Cref{sub:CUTS} the discrete metric in $X=\set{1,2,3,4}$ has the 1-adapted realization $d = \frac{1}{2} (\delta_{\set{2,3}} + \delta_{\set{2,4}} + \delta_{\set{3,4}} )$.

The sufficient condition of \Cref{th:EMB} fails for the associated $C_2$-norm. Actually, if for the functions $u_\epsilon$ we impose the two conditions $u_\epsilon (1) = 0$ and $u_\epsilon (2) = 1$, we get that necessarily $u_\epsilon (3) = u_\epsilon (4)$ and so there are extremal points not covered by the $u_\epsilon$'s. 
\end{example}

\subsection{Quotient maps}

Through the identification of vertices, \citet{mendivil:2017} compute K-distances of new graphs. For instance, the identification of the end-points of a linear graph leads to  the K-distance for a cycle graph. A few examples are described in that paper, in particular, it is shown that any graph can be seen as the quotient space of a tree.

We extend here to metric spaces such an approach as well as some further clarifications are discussed for graphs. Though our result holds true in more general metric spaces, we assume the metric spaces to be finite as it is in the rest of this paper.

Given a mapping $q: X \rightarrow Y$, the associated push-forward map, $q_{\#}(\xi) = \xi\circ q^{-1}$, acts, by restriction on the 0-mass probability functions, as $q_{\#}: M_0(X) \rightarrow M_0(Y)$. Notice that the push-forward map $q_{\#}$ is surjective if $q$ itself is surjective. In fact, surjectivity of $q$ implies that every delta function on $Y$ is the image of a delta function on $X$, and every probability function is a convex combination of delta functions.

Given metric spaces $(X,d_X)$ and $(Y,d_Y)$, we say that the mapping $q \colon X \to Y$ is \emph{exactly non-expansive} if $d_Y(q(x),q(y)) \leq d_X(x,y)$, $x,y \in X$, and moreover for every $u,v \in Y$, there exist points $x,y \in X$ for which  $d_X(x,y) = d_Y(u,v)$, with $q(x)=u$ and $q(y)=v$. That is, the inequality is actually an equality.

\begin{theorem}
\label{th:quot}
If the mapping $q: X \rightarrow Y$ is surjective and exactly non-expansive then
\begin{equation*}
 \normat{\AEspace{Y}}{\eta} = \inf_{q_{\#}(\xi) = \eta} \normat{\AEspace{X}}{\xi} \ , \quad \eta \in \ArensEells(Y) \ .     \end{equation*}

In other words, the quotient space $\ArensEells(X) / \ker q_{\#}$ is isometric to $\ArensEells(Y)$.  
\end{theorem}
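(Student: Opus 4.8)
The plan is to prove the two inequalities separately, exploiting the characterization of the Arens--Eells norm as the largest seminorm dominated by $d$ (\Cref{prop:LARG}) for the easy direction, and the dual formulation \cref{eq:DUAL} together with the exact non-expansiveness hypothesis for the harder one. First I would check that $\inf_{q_{\#}(\xi) = \eta} \normat{\AEspace{X}}{\xi} \geq \normat{\AEspace{Y}}{\eta}$. Given any $\xi$ with $q_{\#}(\xi) = \eta$, write $\xi = \sum_{x,y} a(x,y)(\delta_x - \delta_y)$; pushing forward gives $\eta = \sum_{x,y} a(x,y)(\delta_{q(x)} - \delta_{q(y)})$, a representation of $\eta$ of the type in \cref{eq:RAP}. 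Since $q$ is non-expansive, $d_Y(q(x),q(y)) \leq d_X(x,y)$, so $\sum \aval{a(x,y)} d_Y(q(x),q(y)) \leq \sum \aval{a(x,y)} d_X(x,y)$, and taking the infimum over representations of $\xi$ on the right and using \Cref{def:AEspace} for $\AEspace Y$ on the left yields $\normat{\AEspace Y}{\eta} \leq \normat{\AEspace X}{\xi}$; then take the infimum over $\xi$.

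For the reverse inequality $\inf_{q_{\#}(\xi)=\eta}\normat{\AEspace X}{\xi} \leq \normat{\AEspace Y}{\eta}$, the natural route is to produce, for a given $\eta \in \ArensEells(Y)$, a preimage $\xi$ with $\normat{\AEspace X}{\xi} \leq \normat{\AEspace Y}{\eta}$. Take an optimal representation $\eta = \sum_{u,v} b(u,v)(\delta_u - \delta_v)$ achieving $\normat{\AEspace Y}{\eta} = \sum \aval{b(u,v)} d_Y(u,v)$. For each pair $(u,v)$ appearing with $b(u,v) \neq 0$, exact non-expansiveness supplies points $x_{u,v}, y_{u,v} \in X$ with $q(x_{u,v}) = u$, $q(y_{u,v}) = v$, and $d_X(x_{u,v}, y_{u,v}) = d_Y(u,v)$. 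Set $\xi = \sum_{u,v} b(u,v)(\delta_{x_{u,v}} - \delta_{y_{u,v}})$. Then $q_{\#}(\xi) = \sum b(u,v)(\delta_u - \delta_v) = \eta$, so $\xi$ is a legitimate preimage, and directly from \Cref{def:AEspace},
\begin{equation*}
\normat{\AEspace X}{\xi} \leq \sum_{u,v} \aval{b(u,v)} d_X(x_{u,v}, y_{u,v}) = \sum_{u,v} \aval{b(u,v)} d_Y(u,v) = \normat{\AEspace Y}{\eta} \ .
\end{equation*}
Surjectivity of $q$ is what guarantees (as noted just before the theorem) that $q_{\#}$ is surjective, so that the infimum on the right is over a nonempty set and $\ArensEells(X)/\ker q_{\#}$ makes sense; combining the two inequalities gives the claimed equality, and the isometry statement for the quotient is then the standard fact that for a surjective bounded linear map the quotient-norm formula $\normof{\eta}_{\AEspace X / \ker q_{\#}} = \inf_{q_{\#}(\xi)=\eta}\normof{\xi}_{\AEspace X}$ holds.

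I expect the main subtlety, rather than a genuine obstacle, to be bookkeeping around the choice of preimage points: one must only invoke exact non-expansiveness on the finitely many pairs $(u,v)$ with $b(u,v)\neq 0$, and should note that $u = v$ contributes a zero term so no choice is needed there. One could alternatively phrase the reverse inequality via the dual side — observing that a $1$-Lipschitz $v \in \Lipschitz_1^+(d_Y)$ pulls back to $v \circ q \in \Lipschitz_1^+(d_X)$ and, conversely, using exact non-expansiveness to descend an optimal potential from $X$ to $Y$ — but the primal argument above is shorter and avoids having to check that the descended function is well-defined on $Y$.
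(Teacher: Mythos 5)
Your proof is correct. The first inequality (push forward an arbitrary representation of $\xi$ and use non-expansiveness termwise) is exactly the paper's argument. For the reverse inequality you take a genuinely different route: you lift an optimal (or $\epsilon$-optimal) representation of $\eta$ to an explicit preimage $\xi$, invoking the exactness hypothesis once for each pair $(u,v)$ with $b(u,v)\neq 0$. The paper never constructs a preimage of a general $\eta$; it instead observes that $\Phi(\eta)=\inf_{q_{\#}(\xi)=\eta}\normat{\AEspace X}{\xi}$ is a seminorm on $M_0(Y)$, checks $\Phi(\delta_u-\delta_v)\leq d_Y(u,v)$ on differences of Dirac functions only, and concludes $\Phi\leq\normat{\AEspace Y}{\cdot}$ by \Cref{prop:LARG}, the maximality of the Arens--Eells norm among seminorms dominated by $d$ on such differences. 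The two arguments use the hypothesis at the same point but package it differently: yours is constructive and exhibits a near-optimal lift of $\eta$, at the cost of either assuming the infimum in \Cref{def:AEspace} is attained (true here by finiteness, and assumed elsewhere in the paper) or inserting an $\epsilon$, which your argument tolerates without change; the paper's version only ever needs to lift two-point measures and is shorter once \Cref{prop:LARG} is in hand, but it quietly relies on $\Phi$ being a well-defined seminorm on all of $M_0(Y)$, which is where surjectivity of $q_{\#}$ enters. Your closing bookkeeping (the $u=v$ terms, and the standard quotient-norm identity for the final isometry claim) is exactly what is needed.
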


\begin{proof}
Since $q_{\#}$ is surjective, for each $\eta \in \ArensEells(Y)$ there is a $\xi$ such that $q_{\#}(\xi) = \eta$. If  $\xi = \sum a(x,y) (\delta_x - \delta_y)$ is an optimal representation, meaning $\normat{\AEspace{Y}}{\xi} = \sum_{x,y} \aval{a(x,y)}$, then 
 \begin{equation*}
  \eta = \sum a(x,y) (\delta_{q(x)} - \delta_{q(y)}) \ ,   
 \end{equation*}
 which in turn implies
 \begin{equation*}
  \normat{\AEspace{Y}}{\eta} \leq \sum \aval{a(x,y)}d_Y(q(x),q(y)) \leq \sum \aval{a(x,y)}d_X(x,y) =   \normat{\AEspace{X}}{\xi} \ .   
 \end{equation*}

 Since this is true for every $\xi \in (q_{\#})^{-1} (\eta)$, it follows the inequality 
\begin{equation*}
 \normat{\AEspace{Y}}{\eta} \leq \inf_{q_{\#}(\xi) = \eta} \normat{\AEspace{X}}{\xi}      
\end{equation*}
To check that it is an equality, consider that the RHS
\begin{equation*}
 \Phi (\eta) = \inf_{q_{\#}(\xi) = \eta} \normat{\AEspace{X}}{\xi} \end{equation*}
which is clearly a semi-norm of $M_0(Y)$. Now, for each $u,v \in Y$ there are points $x,y \in X$ which realize the exactness assumption, so that
\begin{equation*}
\Phi(\delta_u - \delta_v) \leq  \normat{\AEspace{X}}{\delta_{x} - \delta_{y}} =  d(x,y) = d(u,v) \ .   
\end{equation*}
By \Cref{prop:LARG}, the reverse inequality $\Phi (\eta) \leq  \normat{\AEspace{Y}}{\eta}$ holds and the first claim is proved.

The last statement follows from the definition of quotient norm. \qed
\end{proof}

The condition of being exactly non-expansive seems involved, but it is quite simply verified when the distance is defined by graphs. For example, in the linear graph $x_1  \cdots  x_{n}$, with $q(x_1) = q(x_{n}) = x_1$ and $q(x_j) = x_j$ in all other cases, the map $q$ from the linear graph to the $C_{n-1}$ is surjective and exactly non-expansive as the general argument below shows.

\begin{proposition} \label{prop:identification} Let $q: G \rightarrow H$ be a surjective map  between two weighted graphs and assume the following two conditions both hold:
  \begin{enumerate}[label = \roman*)]
  \item \label{item:identification1} If $x,y$ are close vertices in $G$, then either $q(x) = q(y)$, or $q(x)$ and  $q(y)$ are adjacent vertices  in $H$ and $d_H(q(x),q(y))=d_G(x,y)$;
\item If $u$ and $v$ are close vertices in $H$, then there are vertices $x,y \in G$ for which $q(x)=u$, $q(y) = v$ and $d_G(x,y) \leq d_H(u,v)$.
\end{enumerate}
Then $q$ is exactly non-expansive and hence the conclusion of \Cref{th:quot} holds.
\end{proposition}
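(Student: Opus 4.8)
The plan is to separate the two ingredients of ``exactly non-expansive'' --- the contraction inequality $d_H(q(x),q(y))\le d_G(x,y)$ for all $x,y\in V(G)$, and the attainment of every $d_H$-distance by a pair of $q$-preimages --- and then to invoke \Cref{th:quot}. The contraction inequality I would get by lifting a $G$-geodesic. Fix $x,y\in V(G)$ and a shortest path $x=x_0,x_1,\dots,x_m=y$ in $G$; as noted in \Cref{sec:distance-graph}, consecutive vertices of a minimal path are close, so condition (i) applies to each pair $(x_{i-1},x_i)$ and yields $d_H(q(x_{i-1}),q(x_i))\le d_G(x_{i-1},x_i)$ (with equality when the two images differ, and left-hand side $0$ when they coincide). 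The images $q(x_0),\dots,q(x_m)$ then form a walk in $H$ from $q(x)$ to $q(y)$, and using \cref{eq:CZZ} on the $G$-side,
\[
 d_H(q(x),q(y))\le\sum_{i=1}^{m}d_H(q(x_{i-1}),q(x_i))\le\sum_{i=1}^{m}d_G(x_{i-1},x_i)=d_G(x,y).
\]

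For the attainment part, fix $u,v\in V(H)$, take a shortest path $u=u_0,u_1,\dots,u_k=v$ in $H$, and note that each consecutive pair $(u_{j-1},u_j)$ is close in $H$. Condition (ii) then supplies, for each $j$, preimages $a_j\in q^{-1}(u_{j-1})$ and $b_j\in q^{-1}(u_j)$ with $d_G(a_j,b_j)\le d_H(u_{j-1},u_j)$, which is an equality by the contraction inequality just proved. The naive attempt is to concatenate $G$-geodesics from $a_j$ to $b_j$, $j=1,\dots,k$, into a single $G$-path joining a preimage of $u$ to a preimage of $v$ of total length $\sum_{j}d_H(u_{j-1},u_j)=d_H(u,v)$.

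I expect the junctions to be the main obstacle: $b_j$ and $a_{j+1}$ both lie in $q^{-1}(u_j)$, but condition (ii) gives no control over which preimage it returns, so the pieces need not meet. I would sidestep this by not producing an explicit optimal pair of preimages, but instead working with the quotient semi-norm $\Phi(\eta)=\inf_{q_{\#}\xi=\eta}\normat{\AEspace{X}}{\xi}$ from the proof of \Cref{th:quot}. Since $q_{\#}(\delta_{a_j}-\delta_{b_j})=\delta_{u_{j-1}}-\delta_{u_j}$, we get $\Phi(\delta_{u_{j-1}}-\delta_{u_j})\le\normat{\AEspace{X}}{\delta_{a_j}-\delta_{b_j}}=d_H(u_{j-1},u_j)$, and subadditivity of the semi-norm $\Phi$ along the geodesic gives $\Phi(\delta_u-\delta_v)\le d_H(u,v)$. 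By \Cref{prop:LARG} this forces $\Phi\le\normat{\AEspace{Y}}{\cdot}$, and since the contraction inequality yields the reverse bound $\normat{\AEspace{Y}}{\eta}\le\Phi(\eta)$ exactly as in the first half of the proof of \Cref{th:quot}, we obtain $\Phi=\normat{\AEspace{Y}}{\cdot}$, i.e.\ the isometry $\ArensEells(X)/\ker q_{\#}\cong\ArensEells(Y)$ claimed there. To recover the literal statement that some pair of preimages attains each $d_H(u,v)$, I would instead induct on $k$, and essentially all the difficulty then lies in choosing the lifts of consecutive edges of the $H$-geodesic compatibly at the shared intermediate vertices; that is the step where the graph-identification structure behind $q$, rather than conditions (i)--(ii) alone, must be used, and it is the one I would concentrate on.
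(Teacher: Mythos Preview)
Your treatment of non-expansiveness is exactly the paper's: lift a $G$-geodesic, apply condition~(i) edge by edge, and sum. For the exactness half, the paper simply asserts that condition~(ii) produces a single sequence $x_1,\dots,x_m$ with $q(x_i)=u_i$ and $d_G(x_i,x_{i+1})\le d_H(u_i,u_{i+1})$, and then applies the triangle inequality in $G$. Your junction objection is precisely the missing step in that assertion, and it is a real gap: conditions (i)--(ii) do not force the lifts of consecutive $H$-edges to agree at the shared vertex, and in fact they do \emph{not} imply exact non-expansiveness. Take $G$ to be the unit-weight path on $\{1,2,3,4,5\}$, $H$ the unit-weight path on $\{a,b,c\}$, and $q$ given by $1\mapsto a$, $2,3,4\mapsto b$, $5\mapsto c$. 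Both hypotheses hold on every close pair, yet $q^{-1}(a)\times q^{-1}(c)=\{(1,5)\}$ and $d_G(1,5)=4\neq 2=d_H(a,c)$.

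What does survive is the conclusion of \Cref{th:quot}, and your detour through the quotient semi-norm $\Phi$ is the right way to obtain it. In the proof of \Cref{th:quot}, exact non-expansiveness is used only to bound $\Phi(\delta_u-\delta_v)\le d_H(u,v)$; you recover that same bound from condition~(ii) on close pairs together with subadditivity of $\Phi$ along an $H$-geodesic, and \Cref{prop:LARG} then finishes the argument. So your route is not merely an alternative to the paper's---it repairs it. The induction you sketch at the end, aimed at the literal exactness statement, cannot succeed from (i)--(ii) alone (the counterexample above blocks it); drop that part and record the weaker but correct conclusion.
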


\begin{proof} 
 As remarked in \Cref{sec:distance-graph}, a path of minimal distance can be realized by close vertices. Let $x,y \in X$ and $x=x_1,x_2,...,x_n=y$ be such a path. Under \Cref{item:identification1}, the path $q(x_1),q(x_2),...,q(x_n)$ links $q(x)$ to $q(y)$. Hence 
\begin{equation*}
 d_H(q(x),q(y)) \leq \sum_{i} d_H(q(x_i),q(x_{i+1}) \leq  \sum_{i} d_G(x_i,x_{i+1}) = d_G(x,y). \end{equation*}
Therefore $q$ is non-expansive.

Take now two vertices $u,v \in H$ and an existing path $u=u_1,u_2,...,u_m=v$ of minimal distance and composed by close vertices. Under assumption (ii) we get a path $x= x_1,x_2,...,x_m =y$ that satisfies the conditions: $q(x_i)=u_i$ and $d_G(x_i,x_{i+1}) \leq d_H(u_i,u_{i+1})$.
Therefore,
\begin{equation*}
 d_H(u,v) = \sum_{i=1}^{m-1} d_H(u_i,u_{i+1}) \geq \sum_{i=1}^{m-1} d_G(x_i,x_{i+1}) \geq d_G(x,y).
\end{equation*}
Since we already know that $d_H(u,v) \leq d_G(x,y)$, the exactness property is proven. \qed \end{proof}

\section{Conclusion}\label{sec:conclusion}

In this piece of research, we have learned that the dual of the Kantorovich LP problem has a closed form solution in case the ground distance is defined by a tree, and that, in turn, this an instance of the CUT-norm theory. In case of a general graph, the K-distance is the envelope the distances on each of the spanning trees. This, clearly, prompts for further research about the possible computational application. This is not done here but we provide simple tutorial examples. Our approach is closely based on the systematic use of Arens-Eells theory. We believe this aspect is our main contribution to the topic together with some remarks about possible generalisation to other types of metric spaces.  

\section*{Acknowledgments}
\label{sec:acknowledgments}
Both authors thank L. Malag\`o (RIST Cluj-Napoca) for suggesting relevant references. G. Pistone has learned a lot about the state-of-the-art attending a short course on Computational Optimal Transport by G. Peyr\'e. G. Pistone acknowledges the support of de Castro Statistics and of Collegio Carlo Alberto. He is a member of INdAM-GNAMPA.

\bibliographystyle{spbasic}

\end{document}